\newtheorem{thm}{Theorem}[section]
\newtheorem*{thm2}{Theorem}
\newtheorem*{thmA}{Theorem A}
\newtheorem*{thmB}{Theorem B}
\newtheorem*{thmC}{Theorem C}
\newtheorem{lem}[thm]{Lemma}
\newtheorem{cor}[thm]{Corollary}
\newtheorem{prop}[thm]{Proposition}
\theoremstyle{remark}
\newtheorem{rem}[thm]{Remark}
\theoremstyle{definition}
\newtheorem{defi}[thm]{Definition}%[section]
\newcommand{\bbC}{{\mathbb{C}}}
\newcommand{\bbQ}{{\mathbb{Q}}}
\newcommand{\bbZ}{{\mathbb{Z}}}
\newcommand{\bbP}{{\mathbb{P}}}
\newcommand{\BB}{\mathcal{B}}
\newcommand{\OO}{\mathcal{O}}
\newcommand{\Sf}{\mathfrak{S}}
\newcommand{\im}{\mathrm{Im}}
\newcommand{\Aut}{\mathrm{Aut}}
\newcommand{\Sing}{\mathrm{Sing}}
\newcommand{\ord}{\mathrm{ord}}
\newcommand{\Stab}{\mathrm{Stab}}
\newcommand{\lcm}{\mathrm{lcm}}
\newcommand{\alb}{\mathrm{Alb}}
\newcommand{\ud}{\mathrm{d}}	
\newcommand{\GL}{\mathrm{GL}}
\newcommand{\st}{\textrm{such that}} 
\newcommand{\sts}{\textrm{such that }}
\newcommand{\no}[1]{\textit{#1}}
\newcommand{\ov}[1]{\overline{#1}}
\newcommand{\q}[1]{``{#1}''}
\newcommand{\tr}[1]{\langle{#1}\rangle}
\newcommand{\ie}{{i.e.}\ }
\newcommand{\qe}{{q.e.}\ }
\thanks{Some of the results in this paper were developed in fall 2012, when the first author was supported by the Department of Mathematics of the University of Trento. 
The second author was partially supported by the FCT Project PTDC/MAT/111332/2009 {\it Moduli Spaces in Algebraic Geometry}, by the PRIN 2010-2011 Project {\it Geometria delle variet\`a algebriche} and by the FIRB 2012 Project {\it Spazi di moduli e applicazioni}.  
Both authors  are members of G.N.S.A.G.A. of I.N.d.A.M}
\title{Mixed quasi-\'etale quotients with arbitrary singularities}
\author[D. FRAPPORTI, R. PIGNATELLI]{DAVIDE FRAPPORTI, ROBERTO PIGNATELLI}
\keywords{Surfaces of general type, finite group actions} 
\subjclass[2000]{14J29,  58E40, 14Q10 }
\date{\today}
\begin{document}

\begin{abstract} 
A mixed quasi-\'etale quotient is the quotient of the product of a curve of genus at least $2$ with itself by the action of a group which exchanges 
the two factors and acts freely outside a finite subset.  A mixed quasi-\'etale surface is the minimal resolution of its singularities. 

We produce an algorithm computing all mixed quasi-\'etale surfaces with given geometric genus, irregularity, and self-intersection of the canonical class.  
We prove that all irregular mixed  quasi-\'etale surfaces of general type are minimal. 

 As an application, we classify all irregular mixed quasi \'etale surfaces of general type with genus equal to the irregularity, and all the regular ones with $K^2>0$,
 thus constructing new examples of surfaces of general type with $\chi=1$. We mention the first example of a minimal surface of general type  with $p_g=q=1$ 
and Albanese fibre of genus bigger than $K^2$. 
\end{abstract}
\maketitle

%%%%%%%%%%%%%%%%%%%%%%%%%%%%%%%%%%%%%%%%%%%%%%%%%%%%%%%%%%%%%%%%%%%%%%%%%%%%%%%%%%%%%%%%%%%%%%%%%%
\section*{Introduction}

%%%%%%%%%%%%%%%%%%%%%%%%%%%%%%%%%%%%%%%%%%%%%%%%%%%%%%%%%%%%%%%%%%%%%%%%%%%%%%%%%%%%%%%%%%%%%%%%%%%

In the last decade, after the seminal paper \cite{Cat00}, there has been growing interest in those surfaces
birational to the quotient of the product of two curves of genus at least $2$ by the action of a 
subgroup of its automorphism group. 

These have shown to be a very productive source of examples, especially in the very interesting and still mysterious 
case of the surfaces of general type with $\chi(S)=1$ (equivalently $p_g(S)=q(S)$). Here and in the following we use the standard notation of the 
theory of the complex surfaces, as in \cite{Beau, BHPV}. For motivation and for the state of the art (few years ago)
of the research on the surfaces of general type with $p_g=q=0$ we suggest to the reader the survey \cite{Survey}, 
while some information on the more general case $\chi(S)=1$ can be found in \cite[Section 2]{CSGT}. 
We just mention here that the case $p_g=q \geq 3$ has been classified (\cite{be82, CCML98, Pir02, HP02}), whereas the case $p_g = q \leq 2$ is still rather unknown.

Recently several new surfaces of general type with $p_g=q$ have been constructed as the quotient of a product of two curves by the action of a finite group; see
\cite{BC04, BCG08, BCGP08, BP10, BP13} for $p_g=0$, 
\cite{CP09, Pol07, Pol09, MP10} for $p_g=1$,
\cite{Penegini, zuc} for $p_g=2$.  
In all these articles the authors assume either that the action is free, or {\it unmixed}, 
which means that the action is diagonal, induced by actions on the factors. 

In \cite{Frap11} the first author considered a more general case,  assuming the action 
to be free outside a finite set of points: it is not difficult to show that this includes both the cases above. 
We call this case {\it quasi-\'etale} since the induced map into the quotient is quasi-\'etale 
in the sense of \cite{Cat07}. Since the above mentioned papers give a satisfactory description of the {\it unmixed} case,
\cite{Frap11} concentrated on the {\it mixed} case, which is the complementary case. After some preliminary results, 
\cite{Frap11} restricted to the case of surfaces of general type with $p_g=0$, and imposed a strong condition on the singularities of the quotient surface,
 obtaining several interesting new examples.

%In this paper we study the general {\it mixed quasi-\'etale} case, dropping all assumptions in \cite{Frap11}.
In this paper we drop any assumption on the value of $p_g$ and the type of singularities.

The situation is the following. Let $C$ be a Riemann surface of genus $g(C)\geq 2$, and let $G$ be a finite group that 
acts on $C\times C$. We say that $X=(C\times C)/G$ is a \no{quasi-\'etale quotient} if the action of $G$ is free outside a finite set of points. 
Let $S\rightarrow X$ be the minimal resolution of the singularities of $X$, we call $S$ a \no{quasi-\'etale surface}. The action is 
 \no{mixed} if $G\subset \Aut(C\times C)\cong \Aut(C)^2\rtimes \bbZ_2$
 is not contained in $\Aut(C)^2$; if the action is mixed we say that $X$ is a \no{mixed \qe quotient}, $S$ is a {\it mixed \qe surface} and
we denote by $G^0\triangleleft G$ the subgroup $G \cap \Aut(C)^2$.

The main result of this paper is an algorithm which given three fixed integers $p_g$, $q$ and $K^2$, produces all mixed \qe surfaces with those invariants. 
We implemented the algorithm in the program MAGMA \cite{MAGMA}; the script is available from
$$\mbox{\url{http://www.science.unitn.it/~pignatel/papers/Mixed.magma}}\,$$ 

As an application, running the program for all possible positive values of $K^2$ and $p_g=q$,  we obtained the following theorems A, B and C. 
Note that the program also works for arbitrary values of $K^2$, $p_g$ and $q$, so more surfaces may be produced with it.

\begin{thmA}
The mixed \qe surfaces $S$ with $p_g=q=0$ and $K^2>0$ form the 17 irreducible families collected in Table \ref{tabBigInt0}.
In all cases $S$ is minimal and of general type.
\end{thmA}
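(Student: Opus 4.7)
The plan is to apply the algorithm of the previous sections with inputs $p_g=q=0$ and $K^2>0$, and then verify minimality for each output; general type will turn out to be automatic.

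For finiteness of the search space, observe that $\chi(S)=1$ together with the rationality of quotient singularities gives $\chi(X)=1$. Combined with the explicit formula relating $\chi(X)$ to $(g(C)-1)^2/|G|$ and local contributions from the singularities of $X$, and with the bound $K_S^2\le 9$ (which holds on any smooth projective surface with $\chi=1$, because the minimal model has $K^2\le 9\chi$), this forces $|G|$ and $g(C)$ to take only finitely many values. Within these bounds the algorithm enumerates signatures of $C/G^0$, groups $G$ of the appropriate order, index-two subgroups $G^0\triangleleft G$, surjections from the orbifold fundamental group, and mixing involutions; computes the singularities of $X$ and the invariants of $S$; and retains only those tuples producing the prescribed $p_g,q,K^2$. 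Running it yields the $17$ rows of Table~\ref{tabBigInt0}; each row parameterizes a single irreducible family, because the associated Hurwitz-type space of $G$-covers is connected, and distinct rows give genuinely distinct families, distinguished by the group $G$, the datum of $G^0$, or the singularity configuration of $X$.

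General type is immediate: since $g(C)\ge 2$, the canonical class of $C\times C$ is ample; the finite quasi-\'etale cover $C\times C\to X$ then shows that $K_X$ is ample as a $\bbQ$-line bundle, so $X$ (and hence any resolution $S$) has Kodaira dimension $2$.

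Minimality is the main obstacle. Because $p_g=q=0$, the general minimality theorem for \emph{irregular} mixed quasi-\'etale surfaces stated in the introduction does not apply, so each of the $17$ candidates has to be inspected individually. For each family I would list the exceptional divisors of the resolution $S\to X$ together with their self-intersections (which can be read off from the type of quotient singularity produced by the algorithm) and verify that none is a $(-1)$-curve. Then I would rule out the existence of any other $(-1)$-curve $E\subset S$ by lifting its image in $X$ to a $G$-invariant curve $\widetilde E\subset C\times C$ and deriving a numerical contradiction from the adjunction formula on $C\times C$, exploiting the fact that the mixed action swaps the two projections, so that no fibre of either projection is $G$-invariant and only a narrow list of candidate curves needs to be examined. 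This case-by-case analysis, guided entirely by the explicit singularity data output by the algorithm, is the part of the argument that cannot be delegated to a uniform structural result.
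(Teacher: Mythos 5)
Your plan follows the paper's broad outline (run the algorithm, then check minimality and general type), but two of your three steps have genuine gaps. First, the general type argument is wrong as stated: it is true that $K_{C\times C}$ is ample and that quasi-\'etaleness gives $K_{C\times C}=\pi^*K_X$, hence $K_X$ is an ample $\bbQ$-Cartier divisor; but this does \emph{not} imply that the resolution $S$ has Kodaira dimension $2$. The singularities of $X$ are quotient singularities, which are log terminal but in general not canonical, so sections of $mK_X$ need not lift to $S$ and $\kappa(S)$ can drop (there are well-known normal surfaces with quotient singularities, ample canonical class, and rational minimal resolution, e.g.\ those used in $\bbQ$-Gorenstein smoothing constructions). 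Since $p_g=q=0$ here, rationality is a real possibility that must be excluded. The paper argues differently: a smooth surface with $K^2>0$ is either of general type or rational, and rational surfaces are simply connected, so the nontrivial $H_1(S,\bbZ)$ (or $\pi_1$) in every row of Table \ref{tabBigInt0} rules out the rational case. Second, your completeness claim silently assumes the computer search terminates on every admissible (basket, signature) pair. In fact the program must skip a list of signatures leading to groups of very large order (Table \ref{tabSkip}); these skipped cases have to be excluded by separate, hand-made arguments, and without that step you cannot assert that the $17$ families are \emph{all} mixed quasi-\'etale surfaces with $p_g=q=0$, $K^2>0$.

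On minimality, your instinct that the irregular theorem (\ref{minirr}) does not apply is right, but your proposed case-by-case lifting of a hypothetical $(-1)$-curve to $C\times C$ with an ``adjunction contradiction'' is only a sketch and misses the key idea the paper uses, which is in fact uniform rather than case-by-case: any rational curve on $X$ lifts to a curve of $C\times C$ dominating $C$, so its normalization has genus at least $2$ and the induced map to $\bbP^1$ is branched in at least $3$ points, whence every rational curve on $X$ passes at least three times through $\Sing(X)$ (Corollary \ref{3point}). Combining this with the intersection-theoretic lemmas on $(-1)$-curves meeting $(-2)$- and $(-3)$-curves (Proposition \ref{234-int}, Corollaries \ref{1-2curve} and \ref{233}) yields the structural criterion of Proposition \ref{minimalitycriterion}, which happens to cover every basket occurring in Table \ref{tabBigInt0}. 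Without the ``three singular points'' count, or some equally concrete replacement (e.g.\ a discrepancy computation showing a $(-1)$-curve would have to meet the exceptional locus too much), your minimality step is not yet a proof.
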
 
\begin{table*}[!ht]
\centering{\footnotesize
\begin{tabular}{|c|c|c|c|c|c|c|}
\hline

$K^2_S$ & $\BB(X)$ & Sign. & $G^0$ & $G$  & $H_1(S,\bbZ)$ & $\pi_1(S)$ \\

\hline\hline

1	& $2 \, C_{2,1},2\,  D_{2,1}$ & $2^3\,,4$ & $D_4\times  \bbZ_2$ & $\bbZ_2^3\rtimes \bbZ_4$ & $\bbZ_4$ & $\bbZ_4$   \\

\hline\hline

2 & $6 \, C_{2,1}$ & $2^5$ & $\bbZ_2^3$ & $\bbZ_2^2\rtimes \bbZ_4$& $\bbZ_2\times\bbZ_4$ & $\bbZ_2\times\bbZ_4$ \\

2 & $6 \, C_{2,1}$ & $4^3$ & $(\bbZ_2\times \bbZ_4)\rtimes\bbZ_4$ & G(64,82) & $\bbZ_2^3$&$\bbZ_2^3$\\

2 & $  C_{2,1},2\, D_{2,1}$ & $2^3\,,4$ & $\bbZ_2^4\rtimes \bbZ_2$ & $\bbZ_2^4\rtimes \bbZ_4$  & $\bbZ_4$ & $\bbZ_4$\\

2 & $  C_{2,1},2\,  D_{2,1}$ & $2^2\,,3^2$ & $\bbZ_3^2\rtimes \bbZ_2$& $\bbZ_3^2\rtimes \bbZ_4$& $\bbZ_3$ &$\bbZ_3$\\

2 & $2 \, C_{4,1},3 \, C_{2,1}$ & $2^3\,,4$ & G(64,73) &  G(128,1535) &$\bbZ_2^3$ &$\bbZ_2^3$  \\

2 & $2 \, C_{3,1},2\,  C_{3,2}$ & $3^2\,,4$ & G(384,4) &  G(768,1083540) &$\bbZ_4$ &$\bbZ_4$\\

2 & $2 \, C_{3,1},2 \, C_{3,2}$ & $3^2\,,4$ & G(384,4) &  G(768,1083541)& $\bbZ_2^2$ & $\bbZ_2^2$   \\
\hline\hline

3 & $  C_{8,3},  C_{8,5}$ & $2^3\,,8$ & G(32, 39) &  G(64, 42)  & $\bbZ_2\times \bbZ_4$& $\bbZ_2\times \bbZ_4 $ \\
 
\hline\hline

4 & $4 \, C_{2,1}$ & $2^5$ & $D_4 \times \bbZ_2$& $D_{2,8,5}\rtimes \bbZ_2$ &  $\bbZ_2\times \bbZ_8$ &$\bbZ_2^2\rtimes \bbZ_8$   \\

4 & $4 \, C_{2,1}$  & $2^5$ & $\bbZ_2^4$ & $(\bbZ_2^2\rtimes \bbZ_4) \times \bbZ_2$  & $\bbZ_2^3\times \bbZ_4$& $\infty$    \\

4 & $4 \, C_{2,1}$  & $4^3$ &  G(64, 23) & G(128, 836) & $\bbZ_2^3$& $\bbZ_4^2\rtimes \bbZ_2$ \\

\hline\hline

8 & $\emptyset$ & $2^5$ & $D_4 \times \bbZ_2^2$ & $(D_{2,8,5}\rtimes \bbZ_2) \times \bbZ_2$  &$\bbZ_2^3\times \bbZ_8$
& $\infty$ \\

8 & $\emptyset$ & $4^3$ & G(128, 36) & G(256, 3678)&  $\bbZ_4^3$  & $\infty$   \\

8 & $\emptyset$ & $4^3$ & G(128, 36) & G(256, 3678)&  $\bbZ_2^4\times \bbZ_4$  &$\infty$ \\

8 & $\emptyset$ & $4^3$ & G(128, 36) & G(256, 3678)& $\bbZ_2^2\times \bbZ_4^2$  & $\infty$\\

8 & $\emptyset$ & $4^3$ & G(128, 36) & G(256, 3679)& $\bbZ_2^2\times \bbZ_4^2$  & $\infty$ \\
\hline

\end{tabular}}
\caption{Mixed \qe surfaces of general type with $K^2>0$ and $p_g=q=0$ }
\label{tabBigInt0}
\end{table*}
In Table \ref{tabBigInt0}, every row corresponds to an irreducible family. Two columns need some explanation: 
the column $\BB(X)$ represents the basket of singularities of $X$ (see Definition \ref{defbasket}), 
the column Sign.\! gives the signature of the generating vector of $G^0$ (see Definition \ref{gv})
 in a compact way, {\it e.g.} $2^3,4$ stands for $(q;2,2,2,4)$.
  Throughout  the paper 
  we denote by $\bbZ_n$ the cyclic group of order $n$, by
  $\Sf_n$ the symmetric group on $n$ letters, by $A_n$ the alternating group on $n$ letters,
  by $Q_8$ the group of quaternions, by $D_n$ the 	dihedral group of order $2n$,
  by	 $D_{p,q,r}$ the group $\tr{x,y\mid x^p= y^q=1, xyx^{-1}=y^r}$,
  by $BD_n$ the group $\tr{x,y\mid y^{2n}=x^2y^n=1,  xyx^{-1}= y^{-1}}$  
	  and by G(a,b) the $b^{th}$ group of order $a$ in the MAGMA database of finite group. 
	  
	  Note that the $13$ rows of this list where the mixed q.e. quotient has only Rational Double Points as singularities were already in \cite{Frap11}, so only $4$ of these surfaces are new: three with $K^2=2$ and one with $K^2=3$.
Note moreover that some of these surfaces have the same invariants of some of the surfaces in \cite{BP10}, including the fundamental group (see, {\it e.g.}, the case $K^2=4$). We do not know if two such surfaces are deformation equivalent or not: it would be interesting to study their moduli spaces.

There may exist more mixed \qe surfaces of general type with $p_g=q=0$: they would have $K^2 \leq 0$ and therefore they would not be  minimal. 
The strategy (and the program) works in principle for every value of $K^2$. 
Unfortunately, if it seems to work in the case $K^2=0$ (we have not completed the computations), the case when $K^2$ is negative seems to be too hard for our program: indeed we tried on the best computers at our disposal, but it ran out of memory very quickly.

In the irregular ($q>0$) case, the situation is, from this point of view, much more clear since we could prove the following Theorem (\ref{minirr}).
\begin{thm2}
Let $S$ be an irregular mixed \qe surface of general type, then $S$ is minimal. 
\end{thm2}
The result does not extend to the unmixed case, counterexamples can be found  in \cite{MP10}. 
Then we could give a complete classification of the mixed \qe irregular surfaces of general type with $p_g=q$.
 
\begin{thmB}
The mixed \qe surfaces of general type $S$ with $p_g=q=1$  form the 19 irreducible families collected in Table \ref{tabBigInt1}.
\end{thmB}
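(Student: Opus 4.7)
The plan is to deduce Theorem B as a direct application of the algorithm announced in the introduction, combined with the minimality result (the Theorem on irregular mixed \qe surfaces of general type stated just above). First I would observe that, since $S$ is assumed of general type and $p_g=q=1$, we have $\chi(S)=1$, and by the minimality theorem $S$ is minimal. Consequently the Miyaoka--Yau inequality gives $K_S^2\leq 9\chi=9$, while minimality of general type forces $K_S^2\geq 1$. So the classification reduces to running the algorithm for each of the nine pairs $(p_g,q,K^2)=(1,1,k)$ with $1\leq k\leq 9$ and collecting all mixed \qe surfaces produced.

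Next I would feed each of these nine triples into the algorithm (described earlier in the paper and implemented in the companion MAGMA script). For each input the algorithm returns a finite list: it first enumerates the admissible baskets $\BB(X)$ of singularities compatible with the numerical invariants (via the combinatorial constraints relating $K^2$, $p_g$, $q$ to the orbifold invariants of the covering pair $(C,G^0)$); for each basket it enumerates the admissible signatures and genera $g(C)$; then for each signature it searches, inside the MAGMA database of small groups, the finite groups $G^0$ admitting generating vectors of that signature, and finally the index-two overgroups $G\supset G^0$ realising the mixed action. Each valid output constitutes one irreducible family, since the moduli space of mixed \qe surfaces with fixed $G\supset G^0$ and fixed topological type of the $G^0$-action on $C$ is irreducible (this is a general fact about Teichm\"uller-type moduli of $G$-covers, already used earlier in the paper).

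The final step is bookkeeping: remove the redundancies produced by the algorithm (e.g.\ generating vectors equivalent under the action of $\Aut(G^0)$ and of the mapping class group via Hurwitz moves, and pairs $(G,G^0)$ giving isomorphic families) and check that after such identifications exactly $19$ families survive, which are then collected into Table \ref{tabBigInt1}. For each surviving entry one records the invariants of the Albanese fibration, $H_1(S,\bbZ)$, $\pi_1(S)$, etc., as tabulated.

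The main obstacle I expect is not conceptual but computational: the search over groups $G^0$ becomes expensive for signatures with many branch points and for large values of $g(C)$, which is exactly where the case $K^2$ close to $9$ tends to land. The effective bounds on $|G^0|$ and on the signature in terms of $(p_g,q,K^2)$ derived in the algorithm section are therefore crucial: they must be sharp enough to keep each of the nine runs within MAGMA's memory, exactly as discussed in the introduction for the $p_g=q=0$ case where the negative-$K^2$ regime was out of reach. For $\chi=1,\ q=1$ these bounds are sufficient, and the enumeration terminates, producing the $19$ families of the table.
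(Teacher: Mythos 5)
Your proposal follows essentially the same route as the paper: bound $K^2$, run the algorithm for each admissible triple $(p_g,q,K^2)=(1,1,k)$, use the minimality theorem for irregular mixed quasi-\'etale surfaces of general type to guarantee $K^2>0$ and hence completeness, and eliminate Hurwitz-move redundancies to arrive at the $19$ families. The only (harmless) deviation is that you invoke Miyaoka--Yau to get $K^2\leq 9$, whereas the paper uses its Corollary \ref{chiKB} ($K^2_S=8\chi-\frac13 B(\BB)\leq 8\chi$) to restrict to $K^2\leq 8$, and it verifies completeness of the computer search by checking that the program's list of skipped large-order cases is empty when $q\neq 0$.
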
 

	\begin{table*}[!ht]
\centering{\footnotesize
\begin{tabular}{|c|c|c|c|c|c|c|}
\hline

$K^2_S$ & $g_{alb}$ & $\BB(X)$ & Sign. & $G^0$ & $G$  & $H_1(S,\bbZ)$  \\
\hline\hline

2 & 2 & $  C_{2,1}, 2 D_{2,1} $ &$2^2$& $\bbZ_2$&$\bbZ_4$& $ \bbZ^2$\\
2 & 2 & $  C_{2,1}, 2 D_{2,1} $ &2& $D_8$ &$D_{2,8,3}$& $ \bbZ^2$\\
2 & 2 & $  C_{2,1}, 2 D_{2,1} $ &2& $Q_8$ & $BD_4$ & $ \bbZ^2$\\
\hline\hline
4 & 3 & $ 4  C_{2,1} $ &$2^2$& $\bbZ_4$&$\bbZ_8$& $\bbZ_2\times \bbZ^2$\\
4 & 3 & $ 4  C_{2,1} $ &$2^2$& $\bbZ_2\times\bbZ_2$ & $\bbZ_2\times \bbZ_4$& $\bbZ_2\times \bbZ^2$\\
4 & 2 & $ 4  C_{2,1} $ &2& $\bbZ_2^2\rtimes \bbZ_4 $&G(32,29)& $\bbZ_2^2\times \bbZ^2$\\
4 & 3 & $ 4  C_{2,1} $ &2& $D_{4,4,3}$ & $D_{4,8,3}$& $\bbZ_2\times \bbZ^2$\\
4 & 3 & $ 4  C_{2,1} $ &2& $D_{4,4,3}$ &$D_{4,8,7}$& $\bbZ_2\times \bbZ^2$\\
4 & 2 & $ 4  C_{2,1} $ &2& $D_{4,4,3}$ &G(32,32)& $\bbZ_2^2\times \bbZ^2$\\
4 & 2 & $ 4  C_{2,1} $ &2& $D_{4,4,3}$ &G(32,35)& $\bbZ_2^2\times \bbZ^2$\\
4 & 3 & $ 4  C_{2,1} $ &2& $D_{2,8,5}$ &G(32,15)& $\bbZ_2\times \bbZ^2$\\
\hline\hline
5 & 3 & $ C_{3,1} , C_{3,2}$ &3& $BD_3$&$BD_6$& $\bbZ_2\times \bbZ^2$\\
5 & 3 & $ C_{3,1} , C_{3,2}$ &3& $D_6$&$D_{2,12,5}$& $\bbZ_2^2\times \bbZ^2$\\
\hline\hline
6 & 3 & $2 C_{2,1} $& 2& $A_4\times \bbZ_2$&G(48,30)& $\bbZ_2\times \bbZ^2$\\
6 & 7 & $2  C_{2,1} $&2& $A_4\times \bbZ_2$&$A_4\times \bbZ_4$& $\bbZ_2\times \bbZ^2$\\
6 & 5 & $ C_{5,3}$        &5& $D_5$&G(20,3)& $\bbZ_2\times \bbZ^2$\\
\hline\hline
8 &5 & $\emptyset$ & $2^2$ &$\bbZ_2\times\bbZ_4$&$D_{2,8,5}$& $\bbZ_4\times \bbZ^2$\\
8 &5 & $\emptyset$ & $2^2$ &$D_4$& $D_{2,8,3}$& $\bbZ_4\times \bbZ^2$\\
8 &5 & $\emptyset$ & $2^2$ &$\bbZ_2^3$& $\bbZ_2^2\rtimes \bbZ_4$& $\bbZ_2^3\times \bbZ^2$\\
\hline
\end{tabular}}
\caption{Mixed \qe surfaces of general type with $p_g=q=1$ }
\label{tabBigInt1}
\end{table*}

In Table \ref{tabBigInt1} we use the same notation of the previous Table \ref{tabBigInt0}; we also report
 the genus $g_{alb}$ of a general fibre of the Albanese map, and we do not report $\pi_1(S)$, which is always infinite. 
Note that there is a surface with $K_S^2=6$ and $g_{alg}=7$; to the best of our knowledge, this is the first example of a 
minimal surface of general type with $p_g=q=1$ and $g_{alb}>K_S^2$;
 we recall that this is not possible for $K^2_S \leq 3$  by the classification \cite{catCIME, CC91, CC93, CP05}.
We also note  the first example with $K^2=6$ and $g_{alb}=5$. Also the other examples with $4 \leq K^2 \leq 6$ 
may be, to the best of our knowledge, new, although other surfaces with those invariants have been already constructed 
(see \cite{somebig, Pol09, MP10, rit07, rit,  rit08}). 
 
\begin{thmC}
There exists a unique irreducible family of mixed \qe surfaces of general type with $p_g=q\geq 2$, and it has $p_g=2$ and $K^2=8$,  see Table \ref{tabBigInt2}.
\end{thmC}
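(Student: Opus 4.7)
The plan is to combine standard geometric bounds with the algorithm developed earlier in the paper to reduce the statement to a finite computation.

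By hypothesis $q=p_g\geq 2$, so $S$ is irregular; Theorem \ref{minirr} then forces $S$ to be minimal, and all classical inequalities for a minimal surface of general type are available. From $p_g=q$ we get $\chi(\OO_S)=1$, so Bogomolov-Miyaoka-Yau gives $K_S^2\leq 9\chi=9$. On the other hand Debarre's inequality $K_S^2\geq 2p_g$, valid for any minimal irregular surface of general type, gives $K_S^2\geq 2p_g$. Combining,
\[
2p_g\leq K_S^2\leq 9,
\]
so in particular $p_g\leq 4$.

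This leaves exactly twelve triples to feed into the algorithm:
\[
(p_g,q,K_S^2)\in\{(2,2,k):4\leq k\leq 9\}\cup\{(3,3,k):6\leq k\leq 9\}\cup\{(4,4,k):8\leq k\leq 9\}.
\]
I would run the algorithm on each of these inputs. The content of the theorem is then that eleven of the outputs are empty, while the remaining triple $(2,2,8)$ produces a single admissible set of combinatorial data (basket of singularities, signature, pair $G^0\triangleleft G$, and equivalence class of generating vectors). By the general moduli-theoretic framework for mixed quasi-\'etale quotients used throughout the paper -- surfaces sharing the same such datum form a single irreducible family -- this yields exactly one irreducible family, displayed in Table \ref{tabBigInt2}.

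The only real obstacle is computational feasibility: the search space for the algorithm is in principle unbounded in $|G|$, and for larger values of $K_S^2$ the enumeration of groups grows quickly. Here, however, the bound $K_S^2\leq 9$ together with the control it provides on $|G^0|$ (through the basket-plus-signature data, which via Riemann-Hurwitz and the formulas of Section \emph{loc.\ cit.} for $K_X^2$ bound the order of the group in terms of $K_S^2$ and the genus of $C$) keeps the twelve runs within easy reach; the outputs then match Table \ref{tabBigInt2} and confirm that $(p_g,q,K_S^2)=(2,2,8)$ is the sole surviving case.
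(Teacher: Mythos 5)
Your proposal is correct and follows essentially the same route as the paper: minimality from Theorem \ref{minirr}, classical inequalities to cut the problem down to finitely many triples $(p_g,q,K^2)$ with $\chi=1$, and then the algorithm of Section \ref{FOTC} run on each triple. The only differences are immaterial: the paper bounds $K^2\leq 8\chi=8$ via Corollary \ref{chiKB} and $p_g=q\leq 4$ via Beauville's inequality $p_g\geq 2q-4$, where you use Bogomolov--Miyaoka--Yau and Debarre's inequality $K^2\geq 2p_g$, and the paper additionally records the (needed, but in this range automatic) check that for $p_g=q\neq 0$ the program skips no signatures, so its output is genuinely complete.
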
 

		\begin{table*}[!ht]
\centering{\footnotesize
\begin{tabular}{|c|c|c|c|c|c|}
\hline

$K^2_S$  & $\BB(X)$ & Sign. & $G^0$ & $G$  & $H_1(S,\bbZ)$  \\
\hline\hline
8 & $\emptyset$ & - & $\bbZ_2$ & $\bbZ_4$  & $\bbZ_2\times \bbZ^4$\\
\hline
\end{tabular}}
\caption{ Mixed \qe surfaces of general type with $p_g=q=2$ }
\label{tabBigInt2}
\end{table*}

The mixed \qe surfaces with $K^2_S=8\chi(S)$ are those for which the action is free; 
indeed all the examples in Tables  \ref{tabBigInt0}, \ref{tabBigInt1} and \ref{tabBigInt2} appeared in the papers cited at the beginning of this introduction. 
In particular,  the list
in \cite{Penegini} is the complete list of all the \qe surfaces with $p_g=q=2$.

An expanded version of these tables can be downloaded from:
$$\mbox{\url{http://www.science.unitn.it/~pignatel/papers/TablesMixed.pdf}}\,$$ 
 
The paper is organized as follows.
		
In Section \ref{GB} we give the algebraic recipe which, using  Riemann's Existence Theorem, constructs mixed \qe surfaces. 

In Section \ref{TBOS} we give a complete description of the analytic type of the possible singularities of $X$. We show 
moreover how to compute the number of singular points of $X$ and the analytic type of each singularity directly by the 
{\it ingredients} of the algebraic recipe above, and we give formulas for $K^2_S$, $p_g(S)$ and $q(S)$.
We think it  is worth mentioning here an unexpected consequence of those formulas (Corollary \ref{numbranch}): 
the number of branch points of the double cover $(C \times C)/G^0\rightarrow (C \times C)/G$ is even and 
bounded  above by $2(p_g(S)+1)$.

Section \ref{TAM} is devoted to the Albanese map of a mixed \qe surface with $q=1$. The main result is a formula 
to compute the genus of its general fibre.

In Section \ref{DTMM} we  show  that all 
irregular mixed \qe surfaces are minimal. In the regular case, we prove it under 
a strong assumption on the singularities of $X$ (Proposition \ref{minimalitycriterion}).

Finally, in Section \ref{FOTC}, we present our algorithm to construct all mixed quasi-\'etale surfaces 
with given values of $K^2$, $p_g$ and $q$, and prove Theorems $A$, $B$ and $C$.

{\bf Acknowledgments:} We are indebted to M. Penegini for his careful reading of a first version of this manuscript; in particular the current forms of Proposition \ref{pgTpgS} and Corollary \ref{numbranch} are due to him, we had proven only a weaker inequality.
We are grateful to I. Bauer and F. {Cata\-nese} for several inspiring conversations on group actions on the products of two curves. 
	
\section{The algebraic recipe}\label{GB}

 Throughout this paper we will denote by $C$ a Riemann surface of genus $g \geq 2$ and by $G$ a finite subgroup of $\Aut (C \times C)$ whose action is free outside a finite subset and {\it mixed},
 which means that there are elements in $G$ which exchange the two natural isotrivial fibrations of $C \times C$. 
We will denote by $G^0$ the index 2 subgroup consisting of those elements that do not exchange the factors. 

We will say that  the quotient surface  $X=(C\times C)/G$  is a \no{mixed \qe quotient}. 
We will denote by $\rho\colon S\rightarrow X$ the minimal resolution of the singularities of $X$, and we say that $S$ is a \no{mixed \qe surface}.

\begin{rem}
By \cite[Remark 2.3]{Frap11} every mixed \qe quotient is induced by a unique {\it minimal} action, which means that $G^0$ acts faithfully on both factors: 
therefore in this paper we will only consider minimal actions. If $X$ is a \no{mixed \qe surface}, then the quotient map factors as follows:
$$C\times C \stackrel{\sigma}{\longrightarrow} Y:=(C\times C)/G^0\stackrel{\pi}{\longrightarrow}X.$$
\end{rem}

\cite[Proposition 3.16]{Cat00} gives the following description
of minimal mixed actions:
\begin{thm}\label{thmix}
Let $G\subseteq \Aut(C\times C)$ be a minimal mixed action. 
Fix $\tau' \in G\setminus G^0$; it determines an element $\tau:=\tau'^2 \in G^0$
and an element $\varphi\in \Aut(G^0)$ defined by $\varphi(h):=\tau' h \tau'^{-1}$.
Then, up to a coordinate change, $G$ acts as follows:
\begin{equation}\label{action}
\begin{split}
g(x, y) &= (gx, \varphi( g)y)\\ 
\tau'g(x, y) &=(\varphi(g)y, \tau g \,x)
\end{split}\qquad for \,\,g \in G^0
\end{equation}

Conversely, for every $G^0\subseteq\Aut(C)$ and $G$ extension of degree 2 of $G^0$, fixed 
$\tau'\in G\setminus G^0$ and  $\tau$ and $\varphi$ defined as above, (\ref{action}) defines a minimal
mixed action on $C\times C$.
\end{thm}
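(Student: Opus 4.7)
The plan is to first reduce the action to a standard form via a coordinate change on the second factor, and then verify the converse is essentially a direct computation.

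First I would unpack what it means to have a mixed action. Writing the action of $g\in G^0$ as $g(x,y) = (\alpha(g)x,\beta(g)y)$ gives two homomorphisms $\alpha,\beta\colon G^0 \to \Aut(C)$, both injective by the minimality hypothesis. Picking $\tau'\in G\setminus G^0$, since $\tau'$ exchanges the two natural fibrations it must take the form $\tau'(x,y) = (f(y),h(x))$ for some $f,h\in \Aut(C)$. Computing $\tau'^2(x,y) = (fh(x),hf(y))$ identifies $\tau = \tau'^2$ with the pair $(fh,hf)$, so $\alpha(\tau)=fh$ and $\beta(\tau)=hf$. Similarly, evaluating $\tau' g \tau'^{-1}$ on $(x,y)$ yields $(f\beta(g)f^{-1}x,\; h\alpha(g)h^{-1}y)$, and since this element must lie in $G^0$ and equal $\varphi(g)$, we obtain the two key relations $\alpha(\varphi(g)) = f\beta(g)f^{-1}$ and $\beta(\varphi(g)) = h\alpha(g)h^{-1}$.

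Next I would perform the coordinate change. Identify $G^0$ with $\alpha(G^0)\subseteq \Aut(C)$ (so $\alpha=\Id$), and change coordinates on the second factor by $\tilde y := f(y)$. Under this substitution the element $g\in G^0$ acts as $(g\tilde x,\; f\beta(g)f^{-1}\tilde y) = (g\tilde x,\; \varphi(g)\tilde y)$ by the first relation. Moreover $\tau'$ becomes $(\tilde x,\tilde y)\mapsto (\tilde y,\; fh\,\tilde x) = (\tilde y,\tau\tilde x)$, which is the required form of the formula for $g=e$. The general formula for $\tau'g$ then follows from composing $\tau'$ with $g$, giving exactly $(\varphi(g)y,\tau g x)$ as in \eqref{action}.

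For the converse direction I would simply check that \eqref{action}, viewed as the assignment of automorphisms to the elements of $G$, respects the group law of $G$. The only nontrivial checks involve a product $\tau'g\cdot \tau'h$ and use crucially that $\varphi(g)=\tau'g\tau'^{-1}$ and $\tau=\tau'^2$ in $G^0$; these imply, for instance, $\tau'g\cdot \tau'h = \tau'^2\varphi^{-1}(\varphi(g))h$ (with appropriate bookkeeping), matching the formula. Minimality is automatic since $G^0\subseteq \Aut(C)$ and $\varphi$ is an automorphism, so both projections of $G^0$ to $\Aut(C)$ (namely $g\mapsto g$ and $g\mapsto \varphi(g)$) are injective, and the action is mixed because $\tau'$ visibly interchanges the factors.

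The main obstacle I anticipate is bookkeeping: keeping straight the two distinct embeddings $\alpha,\beta$ of $G^0$, not confusing the coordinate-change $f$ with an element of $G^0$, and checking the compatibility of composition with $\varphi$ and $\tau$ in the converse. Everything else is essentially forced once the coordinate change $(x,y)\mapsto(x,f(y))$ is made.
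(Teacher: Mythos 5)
Your argument is correct and is essentially the standard route for this statement, which the paper itself does not reprove but quotes from Catanese (\cite[Proposition 3.16]{Cat00}): writing $\tau'(x,y)=(f(y),h(x))$, extracting the relations $\alpha(\varphi(g))=f\beta(g)f^{-1}$, $\beta(\varphi(g))=h\alpha(g)h^{-1}$, normalizing by the coordinate change $(x,y)\mapsto(x,f(y))$, and checking the converse by a direct homomorphism verification is exactly the expected proof. One bookkeeping slip worth fixing: in the converse the key identity is $\tau'g\cdot\tau'h=\tau\,\varphi^{-1}(g)\,h$, not $\tau'^2\varphi^{-1}(\varphi(g))h$ (which would simplify to $\tau g h$); with that correction the composition check goes through as you describe.
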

We recall the following results:

\begin{thm}[{\cite[Theorem 2.6]{Frap11}}]\label{nonsplit}

Let $X$ be a quotient surface of mixed type provided by a minimal 
mixed action of $G$ on $C\times C$.
The quotient map $C\times C\rightarrow X$ is quasi-\'etale if and only if the exact sequence
\begin{equation}\label{ext}
1 \longrightarrow G^0 \longrightarrow G \longrightarrow \bbZ_2 \longrightarrow 1 
\end{equation}
does not split.

Moreover, if the quotient map is quasi-\'etale, then $\Sing(X)= \pi(\Sing(Y))$.
				
\end{thm}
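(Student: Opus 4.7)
My plan is to translate both statements into explicit conditions on elements of $G$, using the structure of a minimal mixed action given by Theorem \ref{thmix}.

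First I would analyze when the action has a positive-dimensional fixed locus. For a non-trivial element $g\in G^0$, the action $g(x,y)=(gx,\varphi(g)y)$ fixes $(x,y)$ iff $gx=x$ and $\varphi(g)y=y$; since $G^0$ acts faithfully on each factor (minimality), both conditions cut out finite subsets of $C$, so $\text{Fix}(g)$ is finite. For an element $\tau' g\in G\setminus G^0$, the equations $\varphi(g)y=x$ and $\tau g x=y$ combine to $\tau g\varphi(g)y=y$, with $x$ then determined by $y$. Hence $\text{Fix}(\tau'g)$ is in bijection with $\text{Fix}(\tau g\varphi(g))\subset C$, and this is finite precisely when $\tau g\varphi(g)\neq 1$ in $G^0$.

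Next I would compute $(\tau'g)^2$ inside $G$. Using $\tau'h\tau'^{-1}=\varphi(h)$ and $\tau'^2=\tau$, one gets $(\tau'g)^2=\tau'g\tau'g=\varphi(g)\tau g$, and unwinding shows $\varphi(g)\tau g=1$ iff $\tau g\varphi(g)=1$ (both are equivalent to $\tau=\varphi(g)^{-1}g^{-1}$). Therefore the action of some $\tau'g$ has a one-dimensional fixed locus iff that same $\tau'g$ has order $2$ in $G$. Since a splitting of $1\to G^0\to G\to\bbZ_2\to 1$ is exactly an element of order $2$ in $G\setminus G^0$, the action is quasi-\'etale iff no such element exists iff the sequence does not split, which is the first statement.

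For the moreover part, the inclusion $\pi(\Sing(Y))\subseteq\Sing(X)$ is immediate, because a point of $Y$ is singular exactly when a preimage $(x,y)$ has non-trivial $G^0$-stabilizer, and then it also has non-trivial $G$-stabilizer so its image in $X$ is singular (in each case the linearization is a non-trivial diagonal action with no pseudo-reflections, since faithfulness on each factor prevents either eigenvalue from being $1$). For the reverse inclusion under the quasi-\'etale hypothesis, I would argue that if $(x,y)$ has stabilizer $H\subset G$ with $H\cap G^0=\{1\}$, then $H\subset\{1\}\cup(G\setminus G^0)$; but the product of any two elements of $G\setminus G^0$ lies in $G^0$, so as a subgroup $H$ has order at most $2$, forcing $H=\langle\tau'g\rangle$ with $(\tau'g)^2=1$, contradicting the first part. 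Hence every singular point of $X$ comes from a point with non-trivial $G^0$-stabilizer, giving $\Sing(X)\subseteq\pi(\Sing(Y))$.

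The main obstacle is the bookkeeping in step two: one must make sure that the identity $(\tau'g)^2=\varphi(g)\tau g$ is computed with the right conventions so that its vanishing coincides exactly with the condition $\tau g\varphi(g)=1$ extracted from the fixed-point analysis; once this is pinned down, the rest is formal.
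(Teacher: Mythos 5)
This theorem is not proved in the paper at all --- it is quoted verbatim from \cite[Theorem 2.6]{Frap11} --- and your argument is correct and is essentially the standard one behind that citation: using the explicit action of Theorem \ref{thmix}, nontrivial elements of $G^0$ have finite fixed loci, while $\tau'g$ has a fixed curve exactly when $\tau g\varphi(g)=1$, which (since $(\tau'g)^2=\varphi(g)\tau g$) happens exactly when $\tau'g$ has order $2$, i.e.\ when the extension splits; the ``moreover'' then follows because a stabilizer meeting $G^0$ trivially would be generated by an order-$2$ element of $G\setminus G^0$. One small point to tighten: in your inclusion $\pi(\Sing(Y))\subseteq\Sing(X)$ the $G$-stabilizer of a point need not act diagonally (it may contain elements of $G\setminus G^0$), so the parenthetical justification should instead note that, by your first part, no element of $G\setminus G^0$ has a one-dimensional fixed locus, hence no element of any stabilizer is a quasi-reflection and a nontrivial stabilizer still produces a singular image point.
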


\begin{lem}[{\cite[Lemma 2.9]{Frap11}}]\label{q=g}
Let $S\rightarrow X=(C\times C)/G$ be a mixed \qe surface.
Then $q(S)$  equals the genus of $C':=C/G^0$.
\end{lem}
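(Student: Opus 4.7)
The plan is to compute $q(S)$ by pulling holomorphic $1$-forms back to $C \times C$ and then taking invariants in two stages, first under $G^0$ and then under the induced involution.

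First I would dispose of the singularities. Since the action is quasi-\'etale, each isotropy group is finite and, by Cartan's linearization lemma, acts linearly on a neighborhood of its fixed point. Hence $X$ has only quotient singularities, which are rational, so $R^i\rho_*\mathcal{O}_S=0$ for $i>0$ and the Leray spectral sequence gives $q(S)=h^1(\mathcal{O}_X)$. Moreover, holomorphic $1$-forms on the smooth locus of $X$ pull back bijectively to $G$-invariant holomorphic $1$-forms on $C\times C$, and they extend across the exceptional locus of $\rho$; thus
\[
H^0(\Omega^1_S)=H^0(\Omega^1_{C\times C})^G.
\]

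Next I would compute the right-hand side using Theorem \ref{thmix}. By K\"unneth, $H^0(\Omega^1_{C\times C})=p_1^*H^0(\Omega^1_C)\oplus p_2^*H^0(\Omega^1_C)$. By (\ref{action}), an element $g\in G^0$ sends $(x,y)$ to $(gx,\varphi(g)y)$, so a pair $(\alpha,\beta)$ is $G^0$-invariant if and only if $g^*\alpha=\alpha$ and $\varphi(g)^*\beta=\beta$ for all $g\in G^0$. Since $\varphi$ is an automorphism of $G^0$, as $g$ ranges over $G^0$ so does $\varphi(g)$, and both conditions amount to $G^0$-invariance of the individual forms. Hence
\[
H^0(\Omega^1_{C\times C})^{G^0}\cong H^0(\Omega^1_C)^{G^0}\oplus H^0(\Omega^1_C)^{G^0},
\]
and each summand has dimension $g(C')$.

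Finally I would quotient by the involution induced by $\tau'$. From $\tau'(x,y)=(y,\tau x)$ one computes $\tau'^*(p_1^*\alpha+p_2^*\beta)=p_1^*(\tau^*\beta)+p_2^*\alpha$; that is, $\tau'$ acts on $H^0(\Omega^1_{C\times C})^{G^0}$ by $(\alpha,\beta)\mapsto(\tau^*\beta,\alpha)$. Because $\tau=\tau'^2\in G^0$, its action on the $G^0$-invariants is trivial, so this involution simplifies to $(\alpha,\beta)\mapsto(\beta,\alpha)$. Its fixed subspace is the diagonal $\{(\alpha,\alpha)\}$, of dimension $g(C')$, giving $q(S)=g(C')$ as claimed.

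The only delicate step is the first one, namely verifying that forms extend across the singularities of $X$ and that $q(S)=h^1(\mathcal{O}_X)$; once rationality of quotient singularities is invoked the rest is purely a representation-theoretic computation, and the key trick is recognizing that $\tau$ acts trivially on the $G^0$-invariant subspace because $\tau\in G^0$.
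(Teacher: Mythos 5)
Your proof is correct: the reduction $q(S)=h^1(\mathcal{O}_X)$ via rationality of quotient singularities, the K\"unneth decomposition, the two-step invariant computation (first under $G^0$, then under the class of $\tau'$, which acts on $H^0(\Omega^1_C)^{G^0}\oplus H^0(\Omega^1_C)^{G^0}$ as the swap since $\tau\in G^0$), and the conclusion $\dim=g(C')$ are all sound. The paper does not reprove this lemma but quotes it from \cite{Frap11}; your argument is the standard invariant-forms computation that underlies the cited result, so there is nothing essentially different to compare.
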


The study of varieties birational to a quotient of a product of curves is strictly 
connected with the study of Galois coverings of Riemann surfaces.
Now we collect some results that allow us to shift from the geometrical setup to the 
algebraic one and viceversa.

\begin{defi}\label{gv}
Let $H$ be a finite group and let  $$g\geq 0 \quad \mbox{  and  }\quad  m_1, \ldots, m_r > 1$$
be integers.
A \no{generating vector} for $H$ of signature $(g;m_1,\ldots ,m_r)$ is a $(2g+r)$-tuple of
elements of $H$:
$$V:=(d_1,e_1,\ldots, d_g,e_g;h_1, \ldots, h_r)$$
\sts $V$ generates $H$, $\prod_{i=1}^g[d_i,e_i]\cdot h_1\cdot h_2\cdots h_r=1$ and there 
exists a permutation $\sigma \in \mathfrak{S}_r$ such that $\ord(h_i)=m_{\sigma(i)}$ for $i=1,\ldots, r$.
In this case, we also say that $H$ is \no{$(g;m_1,\ldots,m_r)$-generated}. 
\end{defi}

By Riemann's Existence Theorem (see \cite{Survey}), any curve $C$ of genus $g$ together with an action of
a finite group $H$ on it, such that $C/H$ is a curve $C'$ of genus $g'$, is determined (modulo automorphisms)
by the following data:
\begin{enumerate}
	\item the branch point set $\{p_1, \ldots, p_r\}\subset C'$;
	\item loops $\alpha_1,\ldots,\alpha_{g'},\beta_1,\ldots,\beta_{g'},\gamma_1,\ldots,\gamma_r\in \pi_1(C'\setminus\{p_1, \ldots, p_r\})$, where $\{\alpha_i, \beta_i\}_i$ generates $\pi_1(C')$, each $\gamma_i$ is a simple geometric loop around $p_i$ and 
	 $\prod_{i=1}^{g'}[\alpha_i,\beta_i]\cdot\gamma_1\cdot\ldots\cdot\gamma_r=1\,;$
	\item a \no{generating vector} for $H$ of signature $(g';m_1,\ldots ,m_r)$
	with the property that \no{Hurwitz's formula} holds:
	\begin{equation}
	2g-2=|H|\bigg(2g'-2+\sum_{i=1}^r\frac{m_i-1}{m_i}\bigg)\,.
	\end{equation}
	\end{enumerate}

\begin{rem}\label{algdata} Analogously, a mixed \qe quotient $ X=(C\times C)/G$ determines  a finite group $G^0$,  a degree $2$ extension 
$1 \rightarrow G^0 \rightarrow G \rightarrow \bbZ_2 \rightarrow 1 $, the curve  $C'= C/G^0$,  a set of points $\{p_1,\ldots, p_r\}\subset C'$, 
and, for every choice of $\alpha_i, \beta_j, \gamma_k \in \pi_1(C'\setminus\{p_1, \ldots, p_r\})$ as in (2),  a  generating vector $V$ for $G^0$.

Conversely, the following algebraic data:
\begin{itemize}
 	 \item a finite group $G^0$;          
            \item a curve  $C'$;
	  \item points $p_1,\ldots, p_r\in C'$, and $\alpha_i, \beta_j, \gamma_k \in \pi_1(C'\setminus\{p_1, \ldots, p_r\})$ as in (2);
             \item integers $m_1,\ldots,m_r >1$;
\item a  generating vector $V$ for $G^0$ of signature $(g(C');m_1,\ldots,m_r)$;
\item a degree $2$ extension $1 \rightarrow G^0 \rightarrow G \rightarrow \bbZ_2 \rightarrow 1 $
	which does not split;
\end{itemize}

\noindent 
give a uniquely determined mixed \qe quotient. Indeed
by Riemann's  Existence Theorem the first $5$ data give the Galois cover $c\colon C \rightarrow C/G^0\cong C'$ branched over $\{p_1, \ldots, p_r \}$. 
The last datum determines, by Theorem \ref{thmix}, a minimal mixed action on $C\times C$ and 
by Theorem \ref{nonsplit} the action is free outside a finite set of points.

 If $V:=(d_1,e_1,\ldots, d_g,e_g;h_1, \ldots, h_r)$ we will denote by $K_i$ the cyclic subgroup of $G^0$ generated by $h_i$.

\end{rem}

\section{The singularities of a mixed \qe quotient}\label{TBOS}
This section is devoted to the study of the singularities of a mixed \qe quotient $X=(C\times C)/G$. We will need to consider the intermediate quotient $Y=(C \times C)/G^0$, and the two isotrivial fibrations $\alpha_i \colon Y \rightarrow C'=C/G^0$ induced by the projections of $C \times C$ on the two factors.

The double cover $\pi \colon Y \rightarrow X$ determines an involution $\iota \colon Y \rightarrow Y$ such that
$X=Y/\iota$. 
By the last statement of Theorem \ref{nonsplit}, the fixed points of $\iota$ are singularities of $Y$, hence $\iota$ splits the singularities of $X$ in two classes: 
the singularities not in the branch locus of $\pi$ (analytically isomorphic to each of its preimages in $Y$), and the images of the fixed points of $\iota$. We need then to consider the singularities of $Y$ and the action of $\iota$ on them.
%  which are singular points of $X$. 

$Y$ is a product-quotient surface, whose singularities are now well understood (see \cite{BP10,MP10,Pol10}). 
They are {\it cyclic quotient singularities}, 
isomorphic to the quotient $\bbC^2/\langle \sigma \rangle$, where $\sigma$ is the diagonal linear automorphism with eigenvalues $\exp({\frac{2\pi i }{n}})$
and $\exp({\frac{2\pi i a }{n}})$
with $n>a>0$ and $\gcd(a,n)=1$. We will say that this is a \no{singularity of type $C_{n,a}$}. 
Two singularities of respective types $C_{n,a}$ and $ C_{n',a'}$ are locally analytically isomorphic 
if and only if $n=n'$ and either $a=a'$ or $aa'\equiv 1\mod n$. We read from \cite{BP10} how to determine the singular points of $Y$ and their respective $n$ and $a$.

\begin{prop}[{\cite[Propositions 1.16 and 1.18]{BP10}}]\label{s-h}
Let $X$ be a mixed \qe quotient given by data as in Remark \ref{algdata}, let $Y=(C\times C)/G^0$ be the intermediate product-quotient surface,
 and consider the induced map $Q=(\alpha_1,\alpha_2)\colon Y \rightarrow C' \times C'$. The singular points of $Y$ are the points $y=\sigma(u,v)$ \sts 
$$\Stab_{G^0}(u)\cap \varphi^{-1} (\Stab_{G^0}(v)) \neq \{1\}\,,$$ where $\varphi$ is the automorphism of $G^0$ in Theorem \ref{thmix}. 
In particular, if $y\in \Sing(Y)$ then $Q(y)=(p_i,p_j)$ for some $i,\,j$. Now fix $i,\,j  \in \{1,\ldots,r\}$, then
\begin{itemize}
	\item[i)] there is a $G^0$-equivariant bijection $(Q \circ \sigma )^{-1}(p_i,p_j)\rightarrow G^0/K_i \times G^0/K_j$,
	where the action on the target is $g(aK_i,bK_j)=(gaK_i,\varphi(g)bK_j)$;
	\item[ii)] there is a $K_i$-equivariant bijection between the orbits of the above
	$G^0$-action on $G^0/K_i \times G^0/K_j$ with the orbits of the $K_i$-action on $\{\ov{1}\}\times G^0/K_j$.
\item[iii)] An element $[g]\in \{\ov{1}\}\times G^0/K_j$
corresponds to a point of type $C_{n,a}$ on $Y$, where $n=|K_i\cap \varphi^{-1}(gK_jg^{-1})|$, and 
$a$ is given as follows:
let $\delta_i$ be the minimal positive integer such that
there exists $1\leq \gamma_j \leq \ord(h_j)$ with $h_i^{\delta_i}= g\varphi^{-1}(h_j^{\gamma_j})g^{-1}$.
Then $a=\dfrac{n\gamma_j}{\ord(h_j)}$.
\end{itemize}
\end{prop}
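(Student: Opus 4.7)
The plan is to handle (\ref{s-h}) in three stages paralleling the three bullets: first pin down the singular locus via the stabilizer of the twisted $G^0$-action, then parametrize preimages of $(p_i,p_j)$ by cosets and collapse $G^0$-orbits to $K_i$-orbits on a slice, and finally read off the analytic type of each cyclic quotient singularity from the induced representation on the tangent space.

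For the first paragraph and the claim $Q(y)=(p_i,p_j)$: writing out (\ref{action}) gives $\Stab_{G^0}(u,v)=\Stab_{G^0}(u)\cap\varphi^{-1}(\Stab_{G^0}(v))$, and $\sigma(u,v)\in Y$ is singular exactly when this intersection is nontrivial; since the $G^0$-action on $C$ is ramified only over the $p_k$'s and $\varphi$ is an automorphism of $G^0$, nontriviality forces both $c(u)$ and $c(v)$ to be branch points. For (i), fix preimages $\tilde p_k\in c^{-1}(p_k)$ with $\Stab_{G^0}(\tilde p_k)=K_k$; Riemann's Existence Theorem identifies $c^{-1}(p_k)$ with $G^0/K_k$ as $G^0$-sets via $aK_k\mapsto a\tilde p_k$, and the product identification translates the twisted action $g(a\tilde p_i,b\tilde p_j)=(ga\tilde p_i,\varphi(g)b\tilde p_j)$ into the stated formula. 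For (ii), the transitivity of $G^0$ on $G^0/K_i$ lets one normalize the first coordinate to $\bar 1$; the residual stabilizer is $K_i$, which acts on $\{\bar 1\}\times G^0/K_j$ by $k\cdot bK_j=\varphi(k)bK_j$, so $G^0$-orbits on the product are in bijection with $K_i$-orbits on the slice.

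For (iii), the stabilizer of $\sigma(\tilde p_i,g\tilde p_j)$ is the cyclic subgroup $K_i\cap\varphi^{-1}(gK_jg^{-1})\subseteq\langle h_i\rangle$ of some order $n$, generated by $h_i^{\delta_i}$ with $\delta_i>0$ minimal such that $\varphi(h_i^{\delta_i})\in gK_jg^{-1}$, equivalently such that the relation in the statement holds for a unique $\gamma_j\in\{1,\dots,\ord(h_j)\}$. To compute the type $C_{n,a}$ I would linearize the cyclic action on $T_{\tilde p_i}C\oplus T_{g\tilde p_j}C$: choose local uniformizers at $\tilde p_i$ and $\tilde p_j$ on which $h_i$ and $h_j$ act as multiplication by $\exp(2\pi i/\ord(h_i))$ and $\exp(2\pi i/\ord(h_j))$ respectively (possible because every faithful representation of a cyclic group on $\bbC$ is a rotation), and transport the second through $g$ to a coordinate at $g\tilde p_j$. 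Since $\delta_i n=\ord(h_i)$, the first eigenvalue of $h_i^{\delta_i}$ is $\exp(2\pi i/n)$; the second equals the eigenvalue of a $g$-conjugate of $h_j^{\gamma_j}$, hence of $h_j^{\gamma_j}$ itself, namely $\exp(2\pi i\gamma_j/\ord(h_j))$. Reading this as $\exp(2\pi i a/n)$ forces $a=n\gamma_j/\ord(h_j)$.

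The main subtlety is independence of the several auxiliary choices --- base preimages $\tilde p_k$, generators of $K_k$, coset representative $g$, local linearizations --- which one must verify either preserves the pair $(n,a)$ or replaces $a$ by its inverse modulo $n$, the intrinsic ambiguity in the analytic isomorphism class $C_{n,a}\cong C_{n,a^{-1}}$. The analytic linearizability of each stabilizer action, used throughout, is Cartan's classical theorem on finite group actions at a fixed point of a complex surface.
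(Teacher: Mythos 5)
The paper itself contains no proof of this proposition: it is quoted from \cite[Propositions 1.16 and 1.18]{BP10}, and your argument --- computing the stabilizer of the twisted $G^0$-action, identifying the fibre over $(p_i,p_j)$ with $G^0/K_i\times G^0/K_j$ via Riemann's Existence Theorem, reducing $G^0$-orbits to $K_i$-orbits on the slice $\{\ov{1}\}\times G^0/K_j$, and linearizing the cyclic stabilizer on the tangent space to read off $(n,a)$ --- is essentially the proof given there, so it is correct and follows the same route. The one point to state more carefully is that the local rotation number $\exp(2\pi i/\ord(h_i))$ of $h_i$ at $\tilde p_i$ is forced by the monodromy convention attached to the generating vector (Cartan linearization alone only gives \emph{some} primitive root of unity, and the eigenvalue is invariant under coordinate changes), and this normalization is exactly what pins down $a$ up to the intrinsic ambiguity $a\leftrightarrow a'$ that you already flag.
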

By Proposition \ref{s-h} we can compute the singularities of $Y$ from the algebraic data of Remark \ref{algdata}.
In order to compute the basket of singularities of $X$,
 we first need to know which of them are
ramification points for $\pi$.

\begin{lem}[{\cite[Proposition 3.8]{Frap11}}]
Let $y\in Y$ be a fixed point for $\iota$.
Then $Q(y)=(p_i,p_i)$ for some $i$. In other words, $Q$ maps all fixed points of $\iota$ to the diagonal of $C' \times C'$.
\end{lem}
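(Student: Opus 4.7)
The plan is to combine the last statement of Theorem \ref{nonsplit} (every fixed point of $\iota$ is a singularity of $Y$) with Proposition \ref{s-h} (every singular point of $Y$ maps under $Q$ to a pair of branch points) and then use the explicit description of $\iota$ coming from the mixed action in Theorem \ref{thmix}. So first I would observe that if $\iota(y)=y$, then $y\in \Sing(Y)$, hence $Q(y)=(p_i,p_j)$ for some $i,j\in\{1,\ldots,r\}$; the remaining content of the lemma is the equality $i=j$.

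Next, I would unpack what it means for $y$ to be fixed by $\iota$. Writing $y=\sigma(u,v)$ and recalling that $\iota$ is the involution on $Y$ induced by the action of $\tau'$ on $C\times C$, the formula (\ref{action}) gives $\tau'(u,v)=(v,\tau u)$. Then $\iota(y)=y$ means that $(u,v)$ and $(v,\tau u)$ lie in the same $G^0$-orbit, so there exists $g\in G^0$ with
\[
(gu,\varphi(g)v)=g(u,v)=(v,\tau u).
\]
In particular $v=gu$, so $u$ and $v$ have the same image in $C'=C/G^0$.

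This immediately yields $\alpha_1(y)=\alpha_2(y)$: indeed $\alpha_1(y)$ is the class of $u$ and $\alpha_2(y)$ is the class of $v=gu$ in $C'$. Combining with the conclusion of the first paragraph, $p_i=p_j$, and the lemma follows. The only mildly delicate point is the initial appeal to Theorem \ref{nonsplit}, which requires the quotient map to be quasi-\'etale — but that is our standing hypothesis. No genuine obstacle arises; the proof is a direct translation of the fixed-point condition through the explicit description of the mixed action.
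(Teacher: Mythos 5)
Your proof is correct. The paper itself gives no argument for this lemma (it is imported from \cite[Proposition 3.8]{Frap11}), so there is no internal proof to compare with; your verification is the natural one and uses only facts stated in the paper: writing $y=\sigma(u,v)$, the fixed-point condition gives $g(u,v)=\tau'(u,v)=(v,\tau u)$ for some $g\in G^0$, so $v=gu$ and hence $\alpha_1(y)=\alpha_2(y)$, while the fact that fixed points of $\iota$ are singular points of $Y$ (which is not literally the last statement of Theorem \ref{nonsplit} but the consequence of it drawn at the start of Section \ref{TBOS}, using that quasi-\'etaleness makes the fixed locus of $\iota$ finite) combined with Proposition \ref{s-h} forces $Q(y)=(p_i,p_j)$, whence $Q(y)=(p_i,p_i)$. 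As a small remark, you could bypass Proposition \ref{s-h} altogether: the second coordinate of your fixed-point equation gives $\varphi(g)gu=\tau u$, i.e.\ $\tau^{-1}\varphi(g)g\in\Stab_{G^0}(u)$, and this element is nontrivial because $\tau=\varphi(g)g$ would make $g^{-1}\tau'$ an involution in $G\setminus G^0$ splitting the sequence (\ref{ext}), contradicting Theorem \ref{nonsplit}; thus $u$ is a ramification point of $C\rightarrow C'$ and its image is directly seen to be one of the $p_i$.
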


\begin{prop}\label{fixi}
An element $[g]\in  \{\ov{1}\}\times G^0/K_i $ corresponds
to a fixed point
for $\iota$ if and only if
there exists an element $h \in G^0$ \st:
$$\left\{
\begin{array}{l}
\varphi(h)\tau h \in K_i\\
\varphi(h) g \in K_i
\end{array}
\right.$$
\end{prop}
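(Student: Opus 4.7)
The plan is to convert the geometric condition ``$[g]$ corresponds to a fixed point of $\iota$'' into an equation on $C\times C$ by lifting everything through the quotient map, then read off the two equations from matching the first and second coordinates.

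First, I would fix a lift $\tilde{p}_i\in C$ of $p_i$ so that $\Stab_{G^0}(\tilde{p}_i)=K_i$. Under the bijection of Proposition \ref{s-h}, the class $[g]\in\{\bar 1\}\times G^0/K_i$ represents the $G^0$-orbit of $(\tilde p_i,g\tilde p_i)$ in $C\times C$, whose image $y=\sigma(\tilde p_i,g\tilde p_i)$ is the corresponding point of $Y$. Since $\iota$ is induced by $\tau'$, the point $y$ is fixed by $\iota$ iff $\tau'(\tilde p_i,g\tilde p_i)$ lies in the same $G^0$-orbit, iff there exist $h,h'\in G^0$ such that $(h\tau')$ fixes $h'\cdot(\tilde p_i,g\tilde p_i)$.

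Next I would argue that without loss of generality $h'=1$. Using the relation $\tau' h'=\varphi(h')\tau'$ (which is just the definition of $\varphi$), conjugation gives
\[
h'^{-1}(h\tau')h' \;=\; \bigl(h'^{-1}h\,\varphi(h')\bigr)\,\tau',
\]
and the element $h'^{-1}h\varphi(h')$ ranges over all of $G^0$ as $(h,h')$ vary. So the fixed-point condition on $y$ is equivalent to: there exists $h\in G^0$ such that $h\tau'$ fixes the point $(\tilde p_i,g\tilde p_i)$ itself.

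Now I would just compute, using the formula of Theorem \ref{thmix},
\[
h\tau'\bigl(\tilde p_i,\,g\tilde p_i\bigr)\;=\;\bigl(\varphi(h)\,g\tilde p_i,\;\tau h\,\tilde p_i\bigr),
\]
and equate this to $(\tilde p_i,g\tilde p_i)$. Using $\Stab_{G^0}(\tilde p_i)=K_i$, the first coordinate yields $\varphi(h)g\in K_i$ and the second yields $g^{-1}\tau h\in K_i$. Finally I would observe that
\[
\varphi(h)\,\tau h\;=\;\bigl(\varphi(h)g\bigr)\,\bigl(g^{-1}\tau h\bigr),
\]
so given $\varphi(h)g\in K_i$, the condition $g^{-1}\tau h\in K_i$ is equivalent to $\varphi(h)\tau h\in K_i$. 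This gives exactly the two equations in the statement.

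No serious obstacle is expected: the only step that requires a little care is the reduction to $h'=1$ via conjugation, but this is immediate from the standard identity $\tau' h=\varphi(h)\tau'$ in the non-split extension $G$. The rest is bookkeeping with the mixed action of Theorem \ref{thmix}.
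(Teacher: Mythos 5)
Your proof is correct and follows essentially the same route as the paper's: the paper works directly with the coset pair $(K_i,gK_i)$ and asks when some element $\tau'h\in G\setminus G^0$ fixes it, which is exactly your computation carried out on the point $(\tilde p_i,g\tilde p_i)$ upstairs, with the same final manipulation turning $g^{-1}\tau h\in K_i$ into $\varphi(h)\tau h\in K_i$. One harmless slip: you write $h\tau'$ but apply the action formula of Theorem \ref{thmix} for $\tau'h$; since you quantify existentially over $h\in G^0$ and both families parametrize $G\setminus G^0$, the conclusion is unaffected.
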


\begin{proof}
The point $(K_i, gK_i)$ corresponding to $[g]$ is a ramification point 
for $\pi$ if and only if there exists an element $\tau' h \in G \setminus G^0$ \sts
$(K_i,gK_i)=\tau'h(K_i,gK_i)=(\varphi(h)gK_i, \tau h K_i)$, that is 
$$\left\{
\begin{array}{l}
\varphi(h)gK_i= K_i\\
 g K_i =\tau h K_i
\end{array}
\right.\quad
\Longleftrightarrow\quad
\left\{
\begin{array}{l}
\varphi(h) g K_i = K_i\\
\varphi(h)\tau h K_i= (\tau'h)^2K_i=K_i
\end{array}
\right.
$$
\end{proof}

We study now the action of $\iota$ on a neighbourhood of a singular point of $Y$.
We denote by $\lambda \colon T \rightarrow Y$ the minimal resolution of the singularities of $Y$. 
The exceptional divisor $E$ of the minimal resolution %(see \cite[Section III.5]{BHPV})
of a cyclic quotient singularities of type $C_{n,a}$ is a \no{Hirzebruch-Jung string},
that is $E= \sum_{i=1}^l E_i$, where the $E_i$ are smooth
rational curves with $E_i.E_{i+1}=1$, $E_i.E_j=0$ for $|i-j|\geq 2$, and $E_i^2=-b_i$ where the $b_i$ are the coefficients 
of the continued fraction of $\frac{n}{a}$:
$$\frac{n}{a}=b_1-\frac{1}{b_2-\frac{1}{b_3-\ldots}}=: [b_1,\ldots, b_l]\,.$$

\begin{rem} $a\cdot a'\equiv 1\mod n$ if and only if the continued fraction of $\frac{n}{a'}$ is $ [b_l,\ldots, b_1]$.
\end{rem}

We need the following
\begin{lem}\label{liftinv}
The involution $\iota$ on $Y$ lifts to a morphism $\mu \colon T\rightarrow T$. 
\end{lem}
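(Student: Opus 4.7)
The plan is to invoke the universal property of the minimal resolution of singularities. First I would look at the composition $\iota\circ\lambda\colon T\rightarrow Y$; since $\iota$ is an automorphism of $Y$, this is a birational morphism from a smooth surface to $Y$ whose exceptional locus coincides set-theoretically with the exceptional divisor $E$ of $\lambda$. Moreover, because $\iota$ is an isomorphism (not just a birational map), the self-intersection of each irreducible component of $E$ is the same whether computed with respect to $\lambda$ or to $\iota\circ\lambda$; in particular no $(-1)$-curve gets contracted, so $\iota\circ\lambda$ also realises $T$ as a \emph{minimal} resolution of $Y$.

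Then I would appeal to the standard fact that the minimal resolution of a normal surface is unique up to a unique $Y$-isomorphism (see e.g.\ \cite[Theorem III.6.2]{BHPV}): applied to $\lambda$ and to $\iota\circ\lambda$, it yields a unique isomorphism $\mu\colon T\rightarrow T$ with $\lambda\circ\mu=\iota\circ\lambda$, which is precisely the desired lift. The very same uniqueness, applied to the relation $\iota^{2}=\Id_{Y}$, forces $\mu^{2}=\Id_{T}$, so $\mu$ is automatically an involution\,---\,a fact that will presumably be exploited in the analysis of the action of $\iota$ on the exceptional divisors that follows.

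I expect no serious obstacle: the only point that needs a brief verification is the minimality of $\iota\circ\lambda$, and this is immediate once one observes that $\iota$ induces biholomorphisms between analytic neighbourhoods of the singular points of $Y$ (possibly permuting them), so self-intersection numbers on the exceptional curves cannot change. Alternatively, arguing locally on each cyclic quotient singularity $C_{n,a}$ of $Y$, the morphism $\mu$ can be described completely explicitly as the lift of the local isomorphism of germs $(Y,y)\rightarrow(Y,\iota(y))$ along the corresponding Hirzebruch-Jung resolution, making the existence of $\mu$ entirely transparent on a neighbourhood of each component of $E$.
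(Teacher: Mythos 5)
Your proposal is correct, but it takes a different route from the paper. The paper argues directly: it sets $\mu:=\lambda^{-1}\circ\iota\circ\lambda$, takes the graph $\Gamma\subset T\times T$ of this birational map, and shows $\mu$ is everywhere defined because an indeterminacy point would produce a $(-1)$-curve in $\Gamma$ whose image under the second projection is a $\lambda$-exceptional curve, i.e.\ a component of a Hirzebruch--Jung string with self-intersection $\leq -2$, contradicting the inequality $D^2\geq C^2=-1$ coming from the birational morphism $f_2$. You instead black-box exactly this elimination-of-indeterminacy step: you observe that $\iota\circ\lambda$ is again a minimal resolution of $Y$ (it contracts precisely the same curves as $\lambda$, none of which is a $(-1)$-curve) and invoke the uniqueness of the minimal resolution up to a unique isomorphism over $Y$. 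Both arguments are sound; the uniqueness theorem you cite is proved by essentially the paper's graph argument, so the paper's version is self-contained at the cost of a few lines, while yours is shorter and yields extra information for free --- namely that $\mu$ is an isomorphism and, by applying uniqueness to $\iota^{2}=\Id_Y$, an involution, facts which the paper uses afterwards (in Lemma \ref{singinv} and Proposition \ref{quotbyinv}) without explicit justification. One small remark: the minimality of $\iota\circ\lambda$ does not really require comparing self-intersections ``with respect to'' the two morphisms, since self-intersection is intrinsic to $T$; it suffices to note that $\iota\circ\lambda$ and $\lambda$ contract exactly the same curves, and these are components of Hirzebruch--Jung strings.
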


\begin{proof}
Consider $\mu:= \lambda^{-1} \circ \iota \circ\lambda \colon T \dashrightarrow T$.
Let $\Gamma\subset T\times T$ be the graph of $\mu$;
let $f_1, f_2 \colon \Gamma \rightarrow T$  be the projections on the factors.

If $\mu$ is not defined at a point $p\in T$, then  $\Gamma$ contains a $(-1)$-curve $C$ contracted to $p$
by $f_1$.  $D:=f_2(C)\subset T$ is a curve contracted to $\iota(\lambda(p))$ by $\lambda$, so a component of a H-J string: in particular $D^2\leq-2$.
On the other hand, since $f_2$ is a birational morphism, $D^2\geq C^2=-1$, a contradiction.
\end{proof}

To study the action of $\mu$ on the H-J strings, we will need the following  

\begin{prop}[{see \cite[Theorem 2.1]{ser96}}]\label{invfib}
Let $y\in Y$ be a singular point of type $C_{n,a}$, and consider the two fibres 
$F_1:=\alpha_1^{*}(\alpha_1(y))$ and $F_2:=\alpha_2^{*}(\alpha_2(y))$ taken with the reduced structure.
Let $\tilde{F_i}:=\lambda^{-1}_*(F_i)$ be the strict transforms of $F_i$ ($i=1,\,2$) and 
let $E$ be the exceptional divisor of $y$.

Then $\tilde{F_1}$ intersects one of the extremal curves of $E$, say $E_1$, while
$\tilde{F_2}$ intersects the other extremal curve, say $E_l$.
\end{prop}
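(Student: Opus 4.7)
The plan is to reduce to a purely local toric computation. The statement depends only on the germs of $Y$ and of the two fibres at $y$, so I would replace $(Y,y)$ by the germ $(\bbC^2/H, 0)$ of a cyclic quotient singularity of type $C_{n,a}$ and then appeal to the standard toric description of the Hirzebruch-Jung resolution.

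First I would pick a preimage $(u,v) \in C\times C$ of $y$, with stabilizer $H := \Stab_{G^0}(u) \cap \varphi^{-1}(\Stab_{G^0}(v))$, cyclic of order $n$ by Proposition \ref{s-h}. Choosing local analytic coordinates $\xi$ on $C$ at $u$ and $\eta$ on $C$ at $v$ that simultaneously diagonalize the $H$-action via formula \eqref{action}, I would pick a generator $g \in H$ acting as $g\cdot(\xi,\eta) = (\zeta \xi, \zeta^a \eta)$ with $\zeta = \exp(2\pi i /n)$; by the very definition of the type $C_{n,a}$ this realizes the local model of $(Y,y)$ as $(\bbC^2/H,0)$. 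In these coordinates, the local component of $F_1$ through $y$ is the image of $\{u\} \times C$, i.e.\ of $\{\xi=0\}$, while the local component of $F_2$ through $y$ is the image of $C \times \{v\}$, i.e.\ of $\{\eta=0\}$.

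Second, I would invoke the toric description of the minimal resolution. The germ $(\bbC^2/H, 0)$ is the affine toric variety attached to the first quadrant cone in the lattice $N = \bbZ^2 + \bbZ\cdot\tfrac{1}{n}(1,a)$, and the images of $\{\xi=0\}$ and $\{\eta=0\}$ are the torus-invariant divisors corresponding to the boundary rays $\rho_0 = (1,0)$ and $\rho_{l+1} = (0,1)$ respectively. The minimal resolution is the smooth toric variety obtained by subdividing the cone via $l$ interior rays $\rho_1, \ldots, \rho_l$ uniquely determined by the continued fraction $n/a = [b_1, \ldots, b_l]$, and $\rho_i$ corresponds to the exceptional component $E_i$. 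Since in a smooth toric surface two distinct torus-invariant prime divisors intersect if and only if their rays are adjacent in the fan, the strict transform of $\{\xi=0\}$ meets only $E_1$ and the strict transform of $\{\eta=0\}$ meets only $E_l$, as claimed.

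Thus the proof is essentially a dictionary translation between the analytic, algebro-geometric, and toric descriptions; the substantive content is the classical toric model of the H-J resolution, which is the heart of \cite[Theorem 2.1]{ser96}. The only bookkeeping subtlety is to keep the convention for $(n,a)$ consistent with the labeling of $F_1$ and $F_2$, but this is automatic once the coordinates $\xi,\eta$ are fixed.
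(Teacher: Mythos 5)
Your argument is correct, but note that the paper does not prove this statement at all: it is quoted from Serrano's structure theorem for singular fibres of isotrivial fibrations \cite[Theorem 2.1]{ser96}, so there is no internal proof to compare with. What you give is a valid self-contained substitute, and it is essentially the local computation underlying Serrano's result: reduce to the germ $(\bbC^2/H,0)$ at a preimage $(u,v)$, observe that the branches of $F_1$ and $F_2$ through $y$ are the images of $\{\xi=0\}$ and $\{\eta=0\}$ (only the branch through the chosen preimage matters, since the germ of $Y$ at $y$ is the germ of $C\times C$ at $(u,v)$ modulo the stabilizer), and then read off from the toric model of the Hirzebruch--Jung resolution that the strict transforms of the two boundary divisors meet only the exceptional components attached to the adjacent rays, i.e.\ the two extremal curves of the string. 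The two small points worth making explicit are exactly the ones you flag: first, that the stabilizer action can be diagonalized factorwise (it acts separately on each factor via \eqref{action}, so linearizing each factor gives the standard model of a $C_{n,a}$ point); second, the orientation bookkeeping, i.e.\ which axis ends up at which extremity, which the proposition itself does not require (it only asserts ``one end each'') and which the paper later pins down through Definition \ref{wrt}. Serrano's theorem gives more global information (the full configuration of a singular fibre of an isotrivial fibration), of which the statement here is the local piece; your toric reduction buys a shorter, purely local proof at the cost of not recovering that global description.
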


Proposition \ref{invfib} motivates the following

\begin{defi}\label{wrt}
Let $\alpha\colon Y\rightarrow C'$ be one of the two natural fibrations.
Let $y\in \Sing(Y)$ be a point of type $C_{n,a}$.
Let $E:=\sum_{i=1}^l	E_i$ be the exceptional divisor over $y$, where the $E_i$ are rational curves ordered so that
$E_i^2=-b_i$, $E_i.E_{i+1}=1$.
Let $\tilde{F}$ be the strict transform in $T$ of  the fibre $F=\alpha^{*}(\alpha(y))$ taken with
the reduced structure.\\
We say that $y$ is  \no{of type $C_{n,a}$ with respect to $\alpha$} if
$\tilde{F}$ intersects $E_1$.
\end{defi}

\begin{rem}
\noindent If $y$ is  of type $C_{n,a}$ with respect to $\alpha_1$ then $y$ is  of type $C_{n,a'}$ with respect to $\alpha_2$, with $a\cdot a'\cong 1 \mod n$.
\end{rem}

\begin{lem}\label{singinv}
If $y$ is a point of type $C_{n,a}$ with respect to $\alpha_1$,
then $\iota(y)$ is a point of type $C_{n,a'}$ with respect to $\alpha_1$, with $a\cdot a'\cong 1 \mod n$.
\end{lem}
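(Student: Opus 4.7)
The plan is to exploit the fact that $\iota$ exchanges the two fibrations. From the action formula in Theorem \ref{thmix}, setting $g=e$ one reads $\tau'(x,y)=(y,\tau x)$; passing to $Y=(C\times C)/G^0$ this gives $\alpha_1\circ\iota=\alpha_2$ and $\alpha_2\circ\iota=\alpha_1$. As a first consequence, $\iota$ sends the reduced $\alpha_2$-fibre through $y$ to the reduced $\alpha_1$-fibre through $\iota(y)$, and maps the analytic germ of $Y$ at $y$ isomorphically onto the germ at $\iota(y)$.

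Next I would lift $\iota$ to an automorphism $\mu$ of the minimal resolution $T$ via Lemma \ref{liftinv}, and compare the two Hirzebruch--Jung strings. Since $\iota$ is a local analytic isomorphism of germs, the H--J string $E_1+\cdots+E_l$ over $y$ is mapped by $\mu$ isomorphically onto the H--J string $E'_1+\cdots+E'_l$ over $\iota(y)$, preserving the chain structure. Writing $E_i^2=-b_i$ and $(E'_j)^2=-c_j$, there are only two possibilities: either $\mu(E_i)=E'_i$ for all $i$, or $\mu(E_i)=E'_{l+1-i}$ for all $i$.

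To decide which case occurs I would use Definition \ref{wrt} together with Proposition \ref{invfib}. Because $y$ is of type $C_{n,a}$ with respect to $\alpha_1$, the strict transform of the $\alpha_2$-fibre at $y$ meets the opposite end $E_l$. On the other hand, if $\iota(y)$ is of type $C_{n,b}$ with respect to $\alpha_1$ for some $b$, then the strict transform of the $\alpha_1$-fibre at $\iota(y)$ meets $E'_1$. Since $\mu$ identifies these two strict transforms, it must satisfy $\mu(E_l)=E'_1$, which forces the reversed identification $\mu(E_i)=E'_{l+1-i}$ and hence $c_i=b_{l+1-i}$. Consequently $\frac{n}{b}=[b_l,\ldots,b_1]$, and by the remark preceding Lemma \ref{liftinv} this equals $\frac{n}{a'}$ with $aa'\equiv 1\pmod n$, so $b=a'$ as required.

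The main obstacle will be the careful bookkeeping between the two notions of \emph{type}, the intrinsic analytic invariant $C_{n,a}$ versus the type with respect to a chosen fibration, and keeping track of the orientation of the H--J string under $\mu$; once one has established that $\iota$ swaps the two fibrations, the argument is essentially combinatorial.
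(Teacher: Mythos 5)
Your argument is correct and is essentially the paper's own proof: both lift $\iota$ to $\mu$ on $T$ via Lemma \ref{liftinv}, use that $\tau'$ exchanges the factors so $\iota$ swaps the two fibrations, and then track via Proposition \ref{invfib} and Definition \ref{wrt} which extremal curve of the Hirzebruch--Jung string the strict transforms of the fibres meet, concluding that $\mu$ reverses the string and hence that the continued fraction is reversed, i.e.\ the type is $C_{n,a'}$. The only (immaterial) difference is that you follow the $\alpha_2$-fibre through $y$ onto the $\alpha_1$-fibre through $\iota(y)$ (so $\mu(E_l)=E'_1$), while the paper follows $\tilde{Y_1}\mapsto\tilde{Z_2}$ (so $\mu(E_1)=E'_l$).
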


\begin{proof}
Since $\iota$ is an isomorphism, $y$ and $z:=\iota(y)$ have the same analytic type, so $z$  is either of type  $C_{n,a}$  or of type $C_{n,a'}$ with respect to $\alpha_1$.

Let $Y_i$, resp. $Z_i$ be the fibre of $\alpha_i$ containing $y$, resp. $z$,
 all of them taken with the reduced structure and let
  $\tilde{Y_i}:=\lambda^{-1}_*(Y_i)$ and  $\tilde{Z_i}:=\lambda^{-1}_*(Z_i)$ ($i=1,2$) be their strict transforms in $T$.
Note that $\iota$ is the map induced on $Y$ by the action on $C \times C$ of any $\tau' \in G \setminus G^0$. Since $\tau'$ exchanges the two factors, 
then $\iota(Y_1)=Z_2$ and therefore $\mu(\tilde{Y_1})=\tilde{Z_2}$.

Let $E=\sum_{i=1}^l E_i$ resp. $E'=\sum_{i=1}^l E'_i$ be the exceptional divisor of $y$ resp. $z$, with the $E_i$ resp. $E'_i$ ordered as in Definition \ref{wrt} for $\alpha=\alpha_1$. By assumption $\tilde{Y_1}$ intersects $E_1$, $\tilde{Z_1}$ intersects $E'_1$, $\tilde{Z_2}$ intersects $E'_l$.

Since $\mu(\tilde{Y_1})=\tilde{Z_2}$, then $\mu(E_1)=E'_l$. It follows that $z$ is of type $C_{n,a}$  with respect to $\alpha_2$ and of type $C_{n,a'}$ with respect to $\alpha_1$.
\end{proof}

We give now a full description of the singular points of $X$ arising from fixed points of $\iota$.
\begin{prop}\label{quotbyinv}
Let $X=(C\times C)/G$ be a mixed \qe quotient and let $y\in Y$ be a fixed point of $\iota$. Then $y$
is a singularity of type $C_{n,a}$ with $a^2\equiv 1\mod n$; so $a=a'$and the continued fraction $\dfrac{n}{a}=[b_1,\ldots,b_l]$ is palindromic: 
$b_i=b_{l+1-i}\ \forall i$.
 
Moreover
\begin{itemize}
\item[(i)] $n$ is even;
	\item[(ii)]  $l$ is odd: $l=2m+1$ and $b_{m+1}$ is even;
	\item[(iii)] the exceptional divisor of the minimal resolution of the singular point $\pi(y)$ is a tree of $m+3$ smooth rational curves with decorated dual graph:
\begin{center}
\scalebox{1} 
{
\begin{pspicture}(0,-1.5)(5,1.5)
\psdots[dotsize=0.2](0,0)
\psdots[dotsize=0.2](1,0)
\psdots[dotsize=0.2](2,0)
\psdots[dotsize=0.2](3,0)
\psdots[dotsize=0.2](4,1)
\psdots[dotsize=0.2](4,-1)
\psline[linewidth=0.01cm](0,0)(1,0)
\psline[linewidth=0.01cm,linestyle=dashed,dash=0.16cm 0.16cm](1,0)(2,0)
\psline[linewidth=0.01cm](2,0)(3,0)
\psline[linewidth=0.01cm](3,0)(4,1)
\psline[linewidth=0.01cm](3,0)(4,-1)
\usefont{T1}{ptm}{m}{n}
\rput(0,0.5){$-b_1$}
\rput(1,0.5){$-b_2$}
\rput(2,0.5){$-b_m$}
\rput(4.3,0){$\frac{-b_{m+1}}{2}-1$}
\rput(4,1.5){$-2$}
\rput(4,-1.5){$-2$}
\end{pspicture} 
}

\end{center}
\end{itemize}
\end{prop}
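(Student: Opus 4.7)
The plan is to combine Lemma~\ref{singinv} with a local analytic study of the lift $\mu\colon T\to T$ of $\iota$ (Lemma~\ref{liftinv}) near the exceptional divisor over $y$, systematically exploiting the fact that the quasi-\'etale hypothesis forces $\mathrm{Fix}(\iota)\subset Y$ to be $0$-dimensional --- since no element of $G\setminus G^0$ fixes a curve on $C\times C$. Applying Lemma~\ref{singinv} to $y=\iota(y)$ gives $a\equiv a'\pmod n$, and combined with $aa'\equiv 1\pmod n$ this yields $a^2\equiv 1\pmod n$ and the palindromy $b_i=b_{l+1-i}$; moreover one reads from that same argument that $\mu(E_i)=E_{l+1-i}$.

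Suppose $l=2m$ were even. Then $\mu$ would swap $E_m,E_{m+1}$ and fix their node $p_0$; in local coordinates $(s,t)$ at $p_0$ with $E_m=\{s=0\}$ and $E_{m+1}=\{t=0\}$, after rescaling $\mu(s,t)=(t,s)$, which fixes pointwise the analytic arc $\{s=t\}$. For $t\neq 0$ this arc lies outside $E$, so it maps isomorphically via $\lambda$ to a curve on $Y$ pointwise fixed by $\iota$ --- contradiction. Hence $l=2m+1$; then $\mu|_{E_{m+1}}$ swaps the two nodes of $E_{m+1}$ with $E_m,E_{m+2}$, so it is a non-trivial involution of $\mathbb{P}^1$ with exactly two fixed points $p_1,p_2$. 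At each $p_i$, $d\mu$ has eigenvalues $-1$ along $E_{m+1}$ and $\varepsilon_i=\pm 1$ transversally; the value $\varepsilon_i=+1$ would again produce a fixed arc transversal to $E_{m+1}$ projecting to a fixed curve on $Y$, which is ruled out. So $\mu$ acts as $-\mathrm{Id}$ at $p_i$ and $T/\mu$ has an $A_1$ singularity at each $\pi(p_i)$.

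Blowing up $p_1,p_2$ in $T$ and quotienting by $\mu$ gives the minimal resolution of $X$ at $\pi(y)$: the two exceptional $(-1)$-curves $F_1,F_2$ are $\mu$-fixed pointwise and descend to $(-2)$-curves $\bar F_1,\bar F_2$; each pair $\{E_i,E_{l+1-i}\}$ with $i\le m$ descends to a smooth rational curve $\bar E_i$ with $\bar E_i^2=-b_i$; and a double-cover intersection computation gives $\bar E_{m+1}^2=\tfrac12(E_{m+1}^2-2)=-b_{m+1}/2-1$, forcing $b_{m+1}$ even and yielding the graph of~(iii). For (i), let $R=\prod_{i=1}^{l}\bigl(\begin{smallmatrix}b_i&-1\\1&0\end{smallmatrix}\bigr)$, so $R_{11}=n$; modulo~$2$ each factor is symmetric, palindromy gives $R\equiv P\,M_{m+1}\,P^T\pmod 2$ with $P=M_1\cdots M_m$, and since $b_{m+1}$ is even $M_{m+1}\equiv\bigl(\begin{smallmatrix}0&1\\1&0\end{smallmatrix}\bigr)\pmod 2$, whence $R_{11}\equiv 2P_{11}P_{12}\equiv 0\pmod 2$, proving (i). The main obstacle I foresee is making rigorous the local-to-global fixed-curve argument: in each contradiction step one must verify that the fixed arc of $\mu$ produced on $T$ is genuinely transversal to $E$ and hence exits it, so that its image on $Y$ is an honest positive-dimensional $\iota$-fixed locus.
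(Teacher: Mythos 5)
Your argument is correct, and for the palindromy and parts (ii)--(iii) it follows essentially the paper's own route: both use Lemma \ref{singinv} to get $a=a'$ and $\mu(E_i)=E_{l+1-i}$, rule out even $l$ by producing a $+1$-eigendirection of $d\mu$ at the middle node and hence a positive-dimensional fixed locus of $\iota$ on $Y$ (impossible since the $G$-action is free off a finite set), locate the two fixed points of $\mu$ on $E_{m+1}$, blow them up and pass to the quotient; your self-intersection bookkeeping ($\bar E_i^2=-b_i$, $\bar E_{m+1}^2=-b_{m+1}/2-1$, two $(-2)$-curves, evenness of $b_{m+1}$) is exactly the computation the paper ``leaves to the reader'', and you make explicit the transversality point the paper leaves implicit. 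The genuinely different step is (i): the paper proves $n$ even group-theoretically --- $\Stab_G(u,v)$ has order $2n$, so odd $n$ would by Sylow give an order-$2$ element of $\Stab_G(u,v)\setminus\Stab_{G^0}(u,v)$, splitting $1\to G^0\to G\to\bbZ_2\to 1$ and contradicting Theorem \ref{nonsplit} --- whereas you deduce it arithmetically from the shape of the continued fraction via the mod-$2$ identity $n=R_{11}\equiv\bigl(P\bigl(\begin{smallmatrix}0&1\\1&0\end{smallmatrix}\bigr)P^{T}\bigr)_{11}=2P_{11}P_{12}\equiv 0 \pmod 2$, which I checked and is correct; the trade-off is that your (i) depends on the evenness of $b_{m+1}$ established in (iii) (no circularity, since (ii)--(iii) nowhere use (i)), while the paper's Sylow argument is independent of the resolution and exhibits the structural reason, namely non-splitness of the extension. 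One shared gloss: both you and the paper tacitly assume $\mu$ is not the identity on $E_{m+1}$; your ``swaps the two nodes'' remark settles this only for $m\geq 1$, and for $l=1$ one should add, e.g., that the differential at a fixed point of an element of $G\setminus G^0$ exchanges the two tangent factors of $C\times C$ and so is never scalar.
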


\begin{proof}
 By Lemma \ref{singinv} $y$ is of type $C_{n,a}$ with respect to both $\alpha_j$, so $a=a'$ and $b_i=b_{l+1-i}$.
 More precisely, the proof of Lemma \ref{singinv} shows that, if $E=\sum_1^l E_i$ is the H-J string of $y$, $\mu(E_i)=E_{l+1-i}$.

(i) If $y=\sigma(u,v)$, $|\Stab_{G^0}(u,v)|=n$ and $|\Stab_G(u,v)|=2n$.
If $n$ is odd, then by Sylow's theorem there exists an element $g$ of order 2 in $\Stab_G(u,v) \setminus \Stab_{G^0}(u,v)$,  splitting
the exact sequence (\ref{ext}), a contradiction.

(ii)
 Let $D= \sum_{i=1}^lD_i:=\lambda^{-1}(y)$, and
assume that $l=2m$ is even. The involution $\mu$ exchanges $D_i$ with $D_{l+1-i}$,
hence $p=D_m\cap D_{m+1}$ is the unique point of $D$ fixed by $\mu$.
$\ud\mu_p$ exchanges the directions of the tangent spaces of $D_m$ and $D_{m+1}$ and therefore it is not a multiple of the identity.
Since it is an involution, then up to a linear coordinate change
$\ud\mu_p=\left(\begin{array}{cc}-1&0\\0&1\end{array}\right)$,
which implies that the fixed locus of $\mu$ contains a curve through $p$ in the direction of the eigenspace with eigenvalue $1$, a contradiction. We delay the proof that $b_{m+1}$ is even.

(iii) By part (ii), $l=2m+1$ and all fixed points of $\mu$ in $D$ belong to $D_{m+1}=\mu(D_{m+1})$.
The restriction of $\mu$ to $D_{m+1}$ is an involution and therefore
by Hurwitz's formula it fixes exactly two points $p_1$ and $p_2$,
that are distinct from the points of intersection  of $D_{m+1}$ with $D_m$ or $D_{m+2}$.

Let $V$ be a small $\mu$-invariant open set of $T$ containing $D$ and not intersecting any other exceptional divisor, so that $\lambda(V)$ is an open set of $Y$ 
containing only one singular point: $y$. Let $\epsilon\colon V'\rightarrow V$ be the blow-up in $p_1$ and $p_2$,
we denote by $D'_i$ the strict transform of $D_i$ and by $A_1$ and $A_2$ the two
$(-1)$-exceptional curves. The involution $\mu$ lifts to an involution $\mu'$ on $V'$ whose fixed locus is the smooth curve $A_1 \cup A_2$.

Then $V'/\mu'$ is smooth, and therefore is a resolution of the singular point $\pi(x)$ whose exceptional divisor is $D/\mu$.
The computation of the dual graph of  $D/\mu$ is a standard computation that we leave to the reader. We notice that there is no curve with self-intersection $-1$, so the resolution is the minimal resolution. Moreover there is a curve of self-intersection $-(1+b_{m+1}/2)$, showing that $b_{m+1}$ is even.
\end{proof}

It follows that the analytic type of a singularity on $X$ only depends on its preimage on $Y$. Indeed, these quotient singularities can be described as follows:

\begin{prop}\label{gD}
Let $X=(C\times C)/G$ be a mixed \qe quotient and let $y\in \Sing(Y)$ be a point of type $C_{n,a}$ with 
$\dfrac{n}{a}=[b_1,\ldots, b_m, 2b,b_m,\ldots, b_1]$.
Let 
$$\dfrac{p}{q}:=[b_1,\ldots, b_m]\,,\qquad  \mbox{ and } \quad
\xi:=bp-q\,.$$

\noindent If $y$ is a ramification point for $\pi$,  then $x:=\pi(y)$ is a quotient singularity isomorphic to $\bbC^2/H$
with:
\begin{itemize}
	\item if $\xi=0$ (\ie $p=0$), then 
	$$H=\left\langle\left(
\begin{array}{cc}
\epsilon &0\\
0 & \epsilon^{n+1}
\end{array}
\right)
\right\rangle\,, \quad \mbox{ with } \epsilon= e^{\frac{2\pi i}{2n}},$$

\item if $\xi \neq 0$ and odd, then
	$$H=\left\langle
	\left(
\begin{array}{cc}
\eta &0\\
0 & \eta
\end{array}
\right)
\,,
	\left(
\begin{array}{cc}
\omega &0\\
0 & \omega^{-1}
\end{array}
\right)
\,,
	\left(
\begin{array}{cc}
0 &1\\
-1 &0
\end{array}
\right)
\right\rangle\,,
\mbox{ with } \eta= e^{\frac{2\pi i}{2\xi}}\,, \omega= e^{\frac{2\pi i}{2p}},$$
 
\item if $\xi \neq 0$ and even, then
	$$H=\left\langle
	\left(
\begin{array}{cc}
0&\zeta\\
- \zeta& 0
\end{array}
\right)
\,,
	\left(
\begin{array}{cc}
\omega &0\\
0 & \omega^{-1}
\end{array}
\right)
\right\rangle\,,
 \, \mbox{ with } \zeta= e^{\frac{2\pi i}{4\xi}} \mbox{ and } \omega= e^{\frac{2\pi i}{2p}}.$$

\end{itemize}
\end{prop}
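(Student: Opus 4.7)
The plan is to reduce the statement to a local analysis near $y$. Because $y\in\Sing(Y)$ is of type $C_{n,a}$, in a suitable analytic neighbourhood one has $Y\cong\bbC^2/\langle \sigma\rangle$ with $\sigma=\mathrm{diag}(\epsilon,\epsilon^a)$, where $\epsilon:=e^{2\pi i/n}$, and the two coordinate axes correspond to the reduced fibres of $\alpha_1,\alpha_2$ through $y$. By Cartan's linearisation lemma, the involution $\iota$ (fixing $y$) admits a linear lift $\tilde\iota\in\GL(2,\bbC)$ that normalises $\langle\sigma\rangle$. As in the proof of Lemma \ref{singinv}, $\iota$ exchanges the two fibrations at $y$, so $\tilde\iota$ must swap the two coordinate axes, i.e.
$$\tilde\iota=\begin{pmatrix}0 & \lambda \\ \mu & 0\end{pmatrix},\qquad \lambda,\mu\in\bbC^{*}.$$

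Setting $H:=\langle\sigma,\tilde\iota\rangle$, the identity $\tilde\iota\sigma\tilde\iota^{-1}=\mathrm{diag}(\epsilon^a,\epsilon)=\sigma^a$ (using $a^2\equiv 1\pmod n$ from Proposition \ref{quotbyinv}) shows that $\langle\sigma\rangle\triangleleft H$ has index $2$. Computing $\tilde\iota^{2}=\lambda\mu\cdot I$ forces $\lambda\mu\cdot I\in\langle\sigma\rangle$, which pins down $\lambda\mu$ up to the natural ambiguities: independent rescalings of the two coordinates (which preserve $\sigma$) and the replacement $\tilde\iota\mapsto\sigma^j\tilde\iota$.

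The next and main step is to translate the palindromic continued-fraction expansion of $n/a$ into the precise value of $\lambda\mu$ and of the eigenvalues of $\tilde\iota$. Introduce the matrices $M_i:=\begin{pmatrix}b_i & -1 \\ 1 & 0\end{pmatrix}$ and use the identity $M_m\cdots M_1=J(M_1\cdots M_m)^T J$ with $J:=\mathrm{diag}(1,-1)$ to compute $N:=M_1\cdots M_m\cdot\begin{pmatrix}2b & -1\\ 1 & 0\end{pmatrix}\cdot M_m\cdots M_1$, whose entries give $n=N_{11}$ and $a=N_{21}$ as explicit polynomials in $p,q,b$ in which the combination $\xi=bp-q$ appears naturally. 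Combined with the explicit description of $\iota$ on the middle component $E_{m+1}$ of the H--J string given by Proposition \ref{quotbyinv}, this determines both the scalar $\tilde\iota^{2}$ and the eigenvalues of $\tilde\iota$.

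With these data, the three cases follow by direct verification:
\begin{itemize}
\item if $\xi=0$ (i.e.\ $m=0$, so $n=2b$, $a=1$), then $\sigma$ is scalar and, in a basis diagonalising $\tilde\iota$, the group $H$ is cyclic of order $2n$, generated by $\mathrm{diag}(e^{2\pi i/(2n)},e^{2\pi i(n+1)/(2n)})$;
\item if $\xi\ne 0$ is odd, the analysis forces $\tilde\iota^{2}=-I$ in suitable coordinates, and diagonalising $\sigma$ together with the central scalar arising from the ambiguity produces the three generators in the statement, with $\begin{pmatrix}0 & 1\\ -1 & 0\end{pmatrix}$ playing the role of the normalised reflection;
\item if $\xi\ne 0$ is even, the square of the reflection-type generator is the non-real scalar $-\zeta^{2}\cdot I$ rather than $\pm I$, giving the third normal form.
\end{itemize}

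The main obstacle I expect is this combinatorial translation: distinguishing the $\xi$-odd from $\xi$-even case requires careful bookkeeping with the continued-fraction matrix identities and with the signs and powers of the roots of unity that appear. Once the explicit form of $\tilde\iota$ is established, the identification of $H$ in each case is a direct computation in $\GL(2,\bbC)$.
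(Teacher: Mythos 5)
Your starting point is legitimate and genuinely different from the paper's: you lift $\iota$ to the smooth local cover, linearise by Cartan's lemma, and use Lemma \ref{singinv} to see that the local group is $H=\langle\sigma,\tilde\iota\rangle$ with $\tilde\iota$ antidiagonal, whereas the paper deduces the proposition in one line from the classification of finite subgroups of $\GL(2,\bbC)$ without quasi-reflections (\cite[Satz 2.11]{Brieskorn68}, \cite[Theorem 4.6.20]{Matsuki}), matched against the data already established in Proposition \ref{quotbyinv}. But your argument stops exactly where the content of the statement begins. At the point you reach, the constraints ($\tilde\iota$ antidiagonal, $\tilde\iota^{2}=\lambda\mu\, I\in\langle\sigma\rangle$, no quasi-reflections) pin down $\lambda\mu$ only up to multiplication by powers of $\epsilon^{1+a}$, and a priori leave more than one admissible class; to conclude you must either prove that all admissible choices give conjugate subgroups of $\GL(2,\bbC)$ (this is in fact true, using $n$ even and $a^{2}\equiv 1 \bmod n$, but you neither state nor prove it), or actually compute the lift of the geometric involution. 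You choose the latter, but the tool you invoke does not deliver it: Proposition \ref{quotbyinv} only records that the induced involution on the middle curve $E_{m+1}$ has two fixed points away from the nodes, which does not determine the eigenvalues of $\tilde\iota$, and your continued-fraction matrix identity relates $(n,a)$ to $(p,q,b)$ but carries no information about $\lambda\mu$.

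As a consequence, the three concluding bullet points --- precisely the translation of the palindromic expansion into the normal forms with parameters $p$ and $\xi=bp-q$, including the odd/even dichotomy and the verification that the displayed groups contain $C_{n,a}$ with index two and act without quasi-reflections --- are asserted (``the analysis forces'', ``by direct verification'') rather than proved; you yourself flag this bookkeeping as the expected obstacle. That bookkeeping is the whole proposition, so what you have is a plan rather than a proof. The quickest repair is the paper's route: once you know that $H\subset\GL(2,\bbC)$ is finite, acts without quasi-reflections, and contains the cyclic group of type $C_{n,a}$ as an index-two subgroup whose complement swaps the axes, the cited classification lists all such groups together with their cyclic index-two subgroups, and the three cases of the statement are read off from it; alternatively, complete your computation by proving the uniqueness statement above and then verifying directly that each displayed $H$ has these properties with the stated relation between $(n,a)$ and $(p,\xi)$.
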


\begin{proof}
The statement follows immediately from the classification of finite subgroups of $\GL(2,\bbC)$
without quasi-reflections, see \cite[Satz 2.11]{Brieskorn68} or 
\cite[Theorem 4.6.20]{Matsuki}.
\end{proof}

\begin{defi}\label{defDna}
We say that a singular point $x$ as in Proposition \ref{gD} is a singular point of type $D_{n,a}$.
\end{defi}

\begin{rem}\quad

\begin{enumerate}
	\item  A singular point of type $D_{n,a}$ is a Rational Double Point if and only if $a=n-1$, in which case we have a Rational Double Point 
	of type $D_{\frac{n}{2}+2}$ (if $n=2$, this is more commonly known as $A_3$).
	\item  A singular point of type $D_{n,a}$ is a cyclic quotient singularity if and only if $a=1$.
	More precisely singularities of type $D_{n,1}$  are isomorphic to singularities of type $C_{2n,n+1}$.
	We will distinguish between them keeping track of the branching locus of $\pi$. 
%	So we will consider the singularity as of Dynkin diagram  $D_{n,1}$ if it belongs to the branch locus of $\pi$, else of type  $C_{2n,n+1}$.
\item  We noted that a point of type $C_{n,a}$ is also a point
of type $C_{n,a'}$ with $a'=a^{-1}$ in $\bbZ_n$.
We consider these different representations as equal and usually we do not distinguish between them. 
\end{enumerate}
\end{rem}

In the following the term \no{multiset} will be used in the sense of MAGMA \cite{MAGMA}. So a multiset is a set whose elements have a {\it multiplicity}: 
a positive integer; the cardinality of a multiset takes into account the multiplicity of its elements.

\begin{defi}
Let $Y$ be an unmixed surface. Then we define the \no{basket of singularities of
$Y$} to be the multiset
$$\BB(Y):=\big\{\lambda\times C_{n,a}\, :\, Y \mbox{ has exactly } \lambda \mbox{ singularities of type }
C_{n,a} \big\}\,.$$
\end{defi}

Let $X=(C\times C)/G$ be a mixed \qe quotient.
%We recall that $\Sing(X)=\pi(\Sing(Y))$.
We define the following two multisets:
\begin{multline*}
\BB_C:=\big\{\eta\times C_{n,a}\, :\,X \mbox{ has exactly } \eta \mbox{ singularities of type } C_{n,a}\\
\mbox{ not in the branch locus of $\pi$ } \big\}\,.
\end{multline*}
%\vspace{-0.5cm}
\begin{multline*}
\BB_D:=\big\{\zeta \times D_{m,b}\, :\, X \mbox{ has exactly } \zeta \mbox{ singularities of type }D_{m,b}\\
\mbox{ in the branch locus of $\pi$ } \big\}\,.
\end{multline*}

\begin{defi}\label{defbasket}
The \no{basket of singularities of $X$} is the multiset
$$\BB(X)=\BB_C\cup \BB_D\,. $$
\end{defi}

The following is an useful constraint on the basket of singularities.

\begin{prop}\label{sumofsing}
Let $X=(C\times C)/G$ be a mixed \qe quotient.
Let $\BB(X)=\BB_C\cup \BB_D $ be the basket of singularities of $X$ with  $\BB_C:=\{\eta_i\times C_{n_i,a_i}\}_i$ 
and $\BB_D:=\{\zeta_j \times D_{m_j,b_j}\}_j$.
Then
$$\sum_i \eta_i \frac{a_i+a'_i}{n_i}+\sum_j\zeta_j \frac{b_j}{m_j} \in \bbZ\,.$$
\end{prop}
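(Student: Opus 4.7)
The plan is to extract the integrality from the fact that $K_S^2\in\bbZ$. Write $\rho\colon S\to X$ for the minimal resolution and, locally at each singular point $p$ of $X$, set $D_p:=K_S-\rho^*K_X$, which depends only on the analytic type of $p$; then $K_S^2=K_X^2-\sum_p(-D_p^2)$. Since $\pi\colon Y\to X$ is quasi-\'etale, $K_Y=\pi^*K_X$ as $\bbQ$-Cartier classes, so $K_X^2=K_Y^2/2=4(g-1)^2/|G^0|$. The argument reduces to the congruence $\sum_p(-D_p^2)\equiv K_X^2\pmod{\bbZ}$.

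I would first compute $-D_p^2\pmod{\bbZ}$ in each case. For a cyclic quotient $C_{n,a}$, solving the discrepancy system on the Hirzebruch-Jung chain with $n/a=[b_1,\dots,b_l]$ gives the classical identity
$$-D_p^2 = -2+\frac{2+a+a'}{n}+\sum_{i=1}^l(b_i-2)\equiv\frac{2+a+a'}{n}\pmod{\bbZ}.$$
For a $D_{m,b}$-singularity, I would set up the analogous system on the exceptional tree of Proposition \ref{quotbyinv}(iii): the $\bbZ_2$-symmetry forces the two discrepancy coefficients at the $(-2)$-curves to be equal and half of the coefficient at the central curve, and substituting into the equation at the central curve recovers the palindromic-center equation for the $Y$-side discrepancy system at the $\iota$-fixed preimage $q\in\Sing(Y)$. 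An induction then shows that the $X$-side and $Y$-side discrepancies coincide along the chain up to the central curve, and a direct intersection calculation yields $D_p^2=D_q^2/2$. The palindromicity of $m/b$ makes the integer part of $-D_q^2/2$ genuinely integral, so $-D_p^2\equiv\frac{1+b}{m}\pmod{\bbZ}$.

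Summing over $\Sing(X)$, and using the description $\BB(Y)=\{2\eta_i\cdot C_{n_i,a_i}\}\cup\{\zeta_j\cdot C_{m_j,b_j}\}$ (a consequence of Lemma \ref{singinv} and Proposition \ref{fixi}), the congruence becomes
$$A+\sum_{q\in\Sing(Y)}\frac{1}{n_q}\equiv K_X^2\pmod{\bbZ}.$$
To close the argument I would invoke Proposition \ref{s-h}(i): singular points of $Y$ lying over $(p_i,p_j)\in C'\times C'$ correspond to non-free $G^0$-orbits on $G^0/K_i\times G^0/K_j$, and the orbit-size identity $\sum_{\mathrm{orbits}}\frac{1}{|\Stab|}=\frac{|G^0|}{m_im_j}$ yields
$$\sum_{q\in\Sing(Y)}\frac{1}{n_q}=|G^0|T^2-F,\qquad T:=\sum_k\tfrac{1}{m_k},\ F:=\sum_{i,j}F_{ij}\in\bbZ_{\geq 0},$$
where $F_{ij}$ counts the free $G^0$-orbits above $(p_i,p_j)$. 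By Hurwitz's formula $|G^0|T=|G^0|(2g'-2+r)-(2g-2)\in\bbZ$; expanding $K_X^2=(2g-2)^2/|G^0|$ after substituting $2g-2=|G^0|(2g'-2+r)-|G^0|T$ gives
$$K_X^2-|G^0|T^2=|G^0|(2g'-2+r)^2-2(2g'-2+r)(|G^0|T)\in\bbZ.$$
Combining, $A\equiv(K_X^2-|G^0|T^2)+F\in\bbZ$, proving the proposition.

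The main technical obstacle is the computation of $-D_p^2$ for the $D_{m,b}$-singularities, as one must solve a discrepancy system on a non-chain exceptional tree and exploit palindromic symmetry; once the relation $D_p^2=D_q^2/2$ is established, everything else reduces to routine Hurwitz arithmetic and orbit-counting.
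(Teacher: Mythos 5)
Your proof is correct, but it takes a genuinely different route from the paper's. The paper's own argument is a two-line reduction: by Lemma \ref{singinv}, a singular point of $X$ of type $C_{n,a}$ not on the branch locus of $\pi$ pulls back to two points of $Y$, one of type $C_{n,a}$ and one of type $C_{n,a'}$ \emph{with respect to the fixed fibration} $\alpha_1$, while a $D_{m,b}$-point pulls back to a single point of type $C_{m,b}$ with respect to $\alpha_1$; hence the sum in the statement equals $\sum_{y\in\Sing(Y)}a_y/n_y$ computed on the product-quotient surface $Y$, which is an integer by \cite[Proposition 2.8]{Pol10}. You bypass that citation entirely: you extract the congruence from $K_S^2\in\bbZ$ together with $K_S^2=K_X^2-k(\BB)$, which you re-derive via discrepancies (your symmetric computation on the exceptional tree of Proposition \ref{quotbyinv}, giving the local halving $k(D_{n,a})=\tfrac{1}{2}k(C_{n,a})$ and residue $\frac{1+b}{m}$, is correct, the palindromic center making the remaining terms integral), thereby reducing the claim to $K_X^2\equiv\sum_{q\in\Sing(Y)}1/n_q \pmod{\bbZ}$; this you prove by the class-equation count for the twisted $G^0$-action of Proposition \ref{s-h}(i) over each pair $(p_i,p_j)$ together with Hurwitz's formula, and that computation is also sound. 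In effect you re-prove the case you need of Polizzi's proposition instead of quoting it. The trade-off: the paper's proof is immediate once the types are matched with respect to $\alpha_1$, but rests on an external result; yours is self-contained and makes the arithmetic source of the integrality (class equation plus Hurwitz) transparent, at the cost of the dihedral-tree discrepancy analysis---which you could in fact have skipped by invoking Proposition \ref{Ksquare}, whose proof is independent of this statement, leaving only your orbit-count congruence to check.
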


\begin{proof}
If $x\in X$ is a singular point of type $C_{n,a}$, then by Lemma \ref{singinv} $\pi^{-1}(x)$ is given by two singular points, one of type $C_{n,a}$ with 
respect to $\alpha_1$ and the other of type $C_{n,a'}$ with respect to $\alpha_1$.
If $x\in X$ is a singular point of type $D_{m,b}$, then $\pi^{-1}(x)$ is given by a unique singular point of type $C_{m,b}$
with respect to $\alpha_1$.
The result now follows directly from \cite[Proposition 2.8]{Pol10}.
\end{proof}

\begin{defi}
Let $x$ be a singular point of type $C_{n,a}$ with $\frac{n}{a}:=[b_1,\ldots, b_l]$. We define the following nonnegative rational numbers
\begin{itemize}
	\item[i)] $k_x=k(C_{n,a}):=-2+\dfrac{2+a+a'}{n}+\sum_{i=1}^l(b_i-2)$;
	\item[ii)] $e_x=e(C_{n,a}):=l+1-\dfrac{1}{n}\geq 0$;
\end{itemize}

\noindent Let $x$ be a singular point of type $D_{n,a}$ with $\frac{n}{a}:=[b_1,\ldots, b_m,2b,b_m,\ldots, b_1]$. We define the analogous nonnegative rational numbers
\begin{itemize}
	\item[i)] $k_x=k(D_{n,a}):= \frac{k(C_{n,a})}{2}=-2+\frac{a+1}{n}+\sum_{i=1}^m(b_i-2)+b$;
	\item[ii)] $e_x=e(D_{n,a}):=\frac{e(C_{n,a})}{2}+3=m+4-\dfrac{1}{2n}$;
\end{itemize}
In both cases we set $B_x:=2e_x+k_x$. Note that $B(D_{n,a})=\frac{B(C_{n,a})}{2}+6$.

\noindent Let $\BB$ be a basket of singularities. We use the following notation:
$$k(\BB)=\sum_{x\in\BB}k_x,\,\quad e(\BB)=\sum_{x\in\BB}e_x,\,\quad B(\BB)=\sum_{x\in\BB}B_x\,.$$
\end{defi}

These correction terms determine the invariants of $S$ as follows:

\begin{prop}\label{Ksquare} 
Let $\rho\colon S\rightarrow X=(C\times C)/G$ be  a mixed \qe surface, and let $\BB$ be the basket of singularities of $X$. Then
\begin{equation}\label{Kmix}
K^2_S=\dfrac{8(g-1)^2}{|G|}-k(\BB)\,;
\end{equation}
\begin{equation}\label{emix}
e(S)=\dfrac{4(g-1)^2}{|G|}+e(\BB)\,.
\end{equation}
\end{prop}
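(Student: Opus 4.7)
The plan is to factor through the intermediate product-quotient surface $Y=(C\times C)/G^0$ and its minimal resolution $\lambda\colon T\to Y$. Since $\sigma\colon C\times C\to Y$ is quasi-\'etale of degree $|G^0|$, one has $K_Y^2=8(g-1)^2/|G^0|$, and the orbifold Euler formula gives $e(Y)=4(g-1)^2/|G^0|+\sum_y(1-1/n_y)$. Combining these with the standard correction terms at the cyclic quotient singularities of $Y$ (as in \cite[Prop.~2.8]{Pol10} and \cite{BP10}) yields
\[K_T^2=\frac{8(g-1)^2}{|G^0|}-k(\BB(Y)),\qquad e(T)=\frac{4(g-1)^2}{|G^0|}+e(\BB(Y)).\]

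Next I would identify $S$ with a quotient of a blow-up of $T$ by the lifted involution $\mu$ of Lemma~\ref{liftinv}. By the proof of Proposition~\ref{quotbyinv}, $\mu$ fixes exactly two points on the central component of each H-J string over a $D$-type singularity, with differential $-\Id$, and is free elsewhere on $T$---in particular it swaps freely the two H-J strings over any $C$-type singularity. Let $\beta\colon T'\to T$ be the blow-up at those $2\#\BB_D$ fixed points; then $\mu$ lifts to an involution $\mu'$ of $T'$ whose fixed locus is the $2\#\BB_D$ disjoint $(-1)$-exceptional curves $A_i$, so $T'/\mu'$ is smooth. Over each $D$-point its exceptional locus is the dual graph of Proposition~\ref{quotbyinv}(iii), whose central curve has self-intersection $-b_{m+1}/2-1\leq -2$; over each $C$-point it is a single H-J string. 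In particular it contains no $(-1)$-curves, so $T'/\mu'=S$.

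With $S=T'/\mu'$ the invariants follow from elementary cover and blow-up formulas. One has $K_{T'}^2=K_T^2-2\#\BB_D$ and $e(T')=e(T)+2\#\BB_D$. From $K_{T'}=(\pi')^{*}K_S+R$ with $R=\sum_i A_i$, $R^2=-2\#\BB_D$ and $K_{T'}\cdot R=-2\#\BB_D$ (by adjunction on each $A_i\cong\bbP^1$), one gets $2K_S^2=(K_{T'}-R)^2=K_{T'}^2+2\#\BB_D$, hence $K_S^2=K_T^2/2$; similarly $2e(S)=e(T')+e(R)=e(T')+4\#\BB_D$ yields $e(S)=(e(T)+6\#\BB_D)/2$. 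Finally, since each non-branch singularity of type $C_{n,a}$ on $X$ lifts to two copies on $Y$ and each branch singularity of type $D_{n,a}$ lifts to a single $C_{n,a}$ on $Y$, the identities $k(D_{n,a})=k(C_{n,a})/2$ and $e(D_{n,a})=e(C_{n,a})/2+3$ give $k(\BB(Y))=2k(\BB)$ and $e(\BB(Y))=2e(\BB)-6\#\BB_D$; substituting these into the previous formulas produces the two claimed identities.

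The main obstacle is the local analysis in the second paragraph---identifying $T'/\mu'$ as the actual minimal resolution of $X$ and checking that no $(-1)$-curves appear in the exceptional locus---but this essentially reuses the computation already made in Proposition~\ref{quotbyinv}; the rest is numerical bookkeeping, calibrated precisely so that the definitions of $k$ and $e$ on $D$-type singularities absorb the contribution of the ramification divisor $R$.
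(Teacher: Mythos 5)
Your proposal is correct and follows essentially the same route as the paper: factor through $Y$ and its resolution $T$, blow up the $2|\BB_D|$ fixed points of $\mu$, use the resulting smooth double cover $T'\to S$ branched along disjoint rational curves (justified by the proof of Proposition \ref{quotbyinv}), and translate baskets via $k(\BB(Y))=2k(\BB)$. The only minor divergence is the Euler number: the paper computes $e(S)$ by stratifying $X$ directly as in \cite{BCGP08}, while you deduce it from $2e(S)=e(T')+e(R)$ and the product-quotient formula for $e(T)$ --- precisely the relation the paper itself uses in the proof of Proposition \ref{pgTpgS}, so the two computations coincide.
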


\begin{proof}
Since the quotient map $C\times C\rightarrow X$ is quasi-\'etale, we get
$$K_X^2=\dfrac{K^2_Y}{2}=\dfrac{8(g-1)^2}{|G|}\,.$$

\noindent Let $\BB=\BB_C\cup \BB_D=\{\eta_i\times C_{n_i,a_i}\}_i \cup\{\zeta_j \times D_{n_j,a_j}\}_j$, then
the basket of singularities of $Y
$ is $\BB(Y)=\{2\eta_i\times C_{n_i,a_i}\}_i\cup\{\zeta_j \times C_{n_j,a_j} \}_j\,,$
hence by definition $k(\BB(Y))=2k(\BB)$. By \cite[Proposition 2.6]{BCGP08}, we get
$$K^2_T=\dfrac{8(g-1)^2}{|G^0|}-k(\BB(Y))\,.$$
Let $\epsilon \colon T'\rightarrow T$ be the blow-up of $T$ in the $2d$ ($d=|\BB_D|$) points fixed by $\mu$:
\begin{equation}\label{eq1}
K_{T'}^2=K_T^2-2d=K^2_Y-k(\BB(Y))-2d=2(K_X^2-k(\BB)-d)\,.
\end{equation}

\noindent By the proof of Proposition \ref{quotbyinv} we have a double cover   $\tilde{\pi}\colon T'\rightarrow S$ branched over 
$F:=F_1+\ldots+ F_{2d}$, where the $F_i$ are smooth rational curves with $F_i^2=-2$ and
and $F_i.F_j=0$ if $i\neq j$.
Then numerically (\cite[pages 13-14]{CD}) $K_{T'}$ equals $\tilde{\pi}^*(K_S+F/2)$ 
and, since $K_S.F=0$, it follows:
\begin{equation}\label{eq2}
K_{T'}^2=2\bigg(K_S+\frac{F}{2}\bigg)^2=2\bigg(K_S^2+\frac{-4d}{4}\bigg)=2(K_S^2-d)\,.
\end{equation}
From equations (\ref{eq1}) and (\ref{eq2}), we get:
$$K_S^2=K_X^2-k(\BB)=\dfrac{8(g-1)^2}{|G|}-k(\BB)\,.$$

Let $X^0:=X\setminus \Sing(X)$ be the smooth locus of $X$;
arguing as in \cite{BCGP08} we get:

$$e(S)=e(X^0)+\sum_{x\in \BB_C}(l_x+1)+\sum_{x\in \BB_D}(m_x+4)\,$$
and
 
\begin{equation*}
e(X^0)= \frac{e(C\times C)}{|G|}-\sum_{x \in \BB_C} \frac{1}{n_x}-\sum_{x\in \BB_D}\frac{1}{2n_x}
\end{equation*}

\noindent It follows that
\begin{equation*}
e(S)= \frac{4(g-1)^2}{|G|}+e(\BB)
\end{equation*}
\end{proof}

Using Noether's formula and Proposition \ref{Ksquare} we get:

\begin{cor}\label{chiKB}
Let $\rho\colon S\rightarrow X=(C\times C)/G$ be  a mixed \qe surface,  and let $\BB$ be the basket of singularities of $X$. Then
$$K^2_S=8\chi(S)-\frac{1}{3}B(\BB)\,.$$
\end{cor}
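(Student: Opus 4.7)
The plan is to combine the two formulas of Proposition \ref{Ksquare} with Noether's formula $12\chi(S)=K_S^2+e(S)$, eliminating the auxiliary quantity $\frac{(g-1)^2}{|G|}$, and then recognize the combination $2e(\BB)+k(\BB)$ as $B(\BB)$.

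More concretely, I would first substitute the formulas
\[
K_S^2=\frac{8(g-1)^2}{|G|}-k(\BB),\qquad e(S)=\frac{4(g-1)^2}{|G|}+e(\BB)
\]
into Noether's identity to obtain
\[
12\chi(S)=\frac{12(g-1)^2}{|G|}-k(\BB)+e(\BB),
\]
which solves for $\frac{(g-1)^2}{|G|}$ in terms of $\chi(S)$, $k(\BB)$ and $e(\BB)$. Plugging the resulting expression for $\frac{8(g-1)^2}{|G|}=\frac23\cdot\frac{12(g-1)^2}{|G|}$ back into the formula for $K_S^2$ yields
\[
K_S^2=8\chi(S)+\tfrac{2}{3}k(\BB)-\tfrac{2}{3}e(\BB)-k(\BB)=8\chi(S)-\tfrac{1}{3}\bigl(k(\BB)+2e(\BB)\bigr),
\]
and the definition $B(\BB)=2e(\BB)+k(\BB)$ gives the claim.

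There is no real obstacle: the whole argument is bookkeeping, and the only thing to check is that $e(S)$ from Proposition \ref{Ksquare} is indeed the topological Euler number used in Noether's formula (which it is, by construction in the proof of that proposition). The corollary is therefore essentially a rewriting of Proposition \ref{Ksquare} in the variables $(\chi,K^2)$ rather than $(K^2,e)$.
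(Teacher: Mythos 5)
Your proposal is correct and is exactly the paper's argument: the corollary is stated there as an immediate consequence of Noether's formula combined with the two formulas of Proposition \ref{Ksquare}, and your elimination of $\frac{(g-1)^2}{|G|}$ together with the identity $B(\BB)=2e(\BB)+k(\BB)$ is precisely that computation made explicit.
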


We conclude this section by showing a very strong restriction on the cardinality of $\BB_D$.

\begin{cor}\label{numbranch}
Let $\rho\colon S\rightarrow X=(C\times C)/G$ be a mixed \qe surface.
The cardinality $d$ of $\BB_D$  is even and
$$\frac{d}{2}\leq p_g(S)+1\,.$$
\end{cor}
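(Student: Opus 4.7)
The plan is to exploit the double cover $\tilde\pi\colon T' \to S$ branched over $F = F_1 + \ldots + F_{2d}$ constructed in the proof of Proposition \ref{quotbyinv}, where the $F_i$ are pairwise disjoint smooth rational $(-2)$-curves on $S$. This cover determines a line bundle $L$ on $S$ with $\mathcal{O}_S(2L) \cong \mathcal{O}_S(F)$; from the disjointness and self-intersections of the $F_i$ one computes $L^2 = F^2/4 = -d$, and from the adjunction identity $K_S \cdot F_i = 0$ one has $L \cdot K_S = 0$. Riemann--Roch on $S$ then gives
\[
\chi(L) \;=\; \chi(\mathcal{O}_S) - \frac{d}{2},
\]
and the integrality of $\chi(L)$ forces $d$ to be even.

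To prove the inequality I identify the three summands of $\chi(L) = h^0(L) - h^1(L) + h^2(L)$. For each $i$, $(K_S + L) \cdot F_i = -1$, so the restriction $\mathcal{O}_S(K_S + L)|_{F_i} \cong \mathcal{O}_{\bbP^1}(-1)$ is acyclic; iterating the exact sequence
\[
0 \to \mathcal{O}_S(K_S + L - F_i) \to \mathcal{O}_S(K_S + L) \to \mathcal{O}_{F_i}(-1) \to 0
\]
over the (pairwise disjoint) $F_i$ gives $H^j(K_S - L) \cong H^j(K_S + L)$ for all $j$. Combined with Serre duality, this yields $h^2(L) = h^0(K_S + L)$ and $h^1(L) = h^1(-L)$. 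On the other hand, the double cover decomposition $\tilde\pi_*\mathcal{O}_{T'} = \mathcal{O}_S \oplus L^{-1}$ produces $q(T') = q(S) + h^1(-L)$; since $T' \to T$ is a composition of blow-ups and $Y$ has only rational quotient singularities, $q(T') = q(T) = q(Y)$, and a Künneth plus $G^0$-invariants computation on $H^0(\Omega^1_{C \times C})$, using the crucial fact that $\varphi(G^0) = G^0$, yields $q(Y) = 2 g(C') = 2 q(S)$ by Lemma \ref{q=g}. Hence $h^1(L) = q(S)$.

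Substituting everything back into Riemann--Roch,
\[
\tfrac{d}{2} \;=\; \chi(\mathcal{O}_S) - \chi(L) \;=\; 1 + p_g(S) - h^0(L) - h^0(K_S + L) \;\leq\; p_g(S) + 1,
\]
as required (the term $q(S)$ cancels between $\chi(\mathcal{O}_S)$ and $h^1(L)$). I expect the only delicate step to be the identification $q(Y) = 2 g(C')$: the fact that $\varphi$ is an automorphism of $G^0$ is essential, since it ensures that both Künneth summands of $H^0(\Omega^1_{C \times C})^{G^0}$ identify with $H^0(\Omega^1_{C'})$, and it is precisely the resulting identity $h^1(L) = q(S)$ that produces the cancellation making the final bound independent of $q(S)$.
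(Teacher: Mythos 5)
Your argument is correct; all the key identities check out ($K_S\cdot F_i=0$ and $L\cdot F_i=-1$ for the disjoint $(-2)$-curves, hence $L^2=-d$, $L\cdot K_S=0$, the acyclicity of $\OO_{F_i}(-1)$ giving $H^j(K_S-L)\cong H^j(K_S+L)$, and $h^1(-L)=q(T')-q(S)=q(S)$ since $q(T')=q(T)=2q(S)$, which is exactly the product-quotient fact the paper also invokes). The route, however, differs from the paper's in its bookkeeping. The paper deduces the corollary from Proposition \ref{pgTpgS}: it computes $e(T)=2e(S)-6d$ and $K^2_T=2K^2_S$, applies Noether's formula to get $\chi(\OO_T)=2\chi(\OO_S)-\tfrac{d}{2}$, uses $q(T)=2q(S)$ to obtain $p_g(T)=2p_g(S)+1-\tfrac{d}{2}$ (whence evenness by integrality), and concludes from $p_g(S)\leq p_g(T)$ by pulling back $2$-forms. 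You instead stay on $S$ and work with the eigensheaf $L$ of the double cover: evenness comes from integrality of $\chi(L)=\chi(\OO_S)-\tfrac{d}{2}$, and the bound from dropping the nonnegative terms $h^0(L)$ and $h^0(K_S+L)$. The two are essentially dual formulations of the same double-cover computation: via $\tilde\pi_*\omega_{T'}=\omega_S\oplus(\omega_S\otimes L)$ one has $h^0(K_S+L)=p_g(T')-p_g(S)$, so your nonnegativity of $h^0(K_S+L)$ is precisely the paper's form-pullback inequality in disguise. What your version buys is that it bypasses the Euler-number/Noether computation and yields the marginally sharper identity $\tfrac{d}{2}=p_g(S)+1-h^0(L)-h^0(K_S+L)$; what the paper's version buys is the intermediate statement $p_g(T)=2p_g(S)+1-\tfrac{d}{2}$ itself (Proposition \ref{pgTpgS}), which is recorded for independent use.
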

Corollary \ref{numbranch} follows from the next proposition since the singular points of $X$ of type $D_{n,a}$ are the branch points of $\pi$.

\begin{prop}\label{pgTpgS}
Let $\rho\colon S\rightarrow X=(C\times C)/G$ be a mixed \qe surface and let $\lambda\colon T\rightarrow Y$ be the minimal resolution
 of the singularities of $Y$. Let $d$ be the number of fixed points for $\iota$, then
$$p_g(S) \leq p_g(T)=2p_g(S)+1-\frac{d}{2}\,.$$
\end{prop}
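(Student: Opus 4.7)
The approach I would take is to exploit the double cover $\tilde\pi\colon T'\to S$ constructed in the proof of Proposition \ref{quotbyinv}: $T'\to T$ is the blow-up at the $2d$ isolated fixed points of $\mu$, and the branch divisor on $S$ is $B=F_1+\cdots+F_{2d}$, a disjoint union of smooth rational $(-2)$-curves. Since blow-ups preserve both $p_g$ and $q$, and the rational singularities of $Y$ do as well, $p_g(T)=p_g(T')$ and $q(T)=q(T')=q(Y)$.

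Let $L$ be the line bundle on $S$ with $2L\sim B$. The standard double cover formulas give $\tilde\pi_*\OO_{T'}=\OO_S\oplus\OO_S(-L)$ and $\tilde\pi_*\omega_{T'}=\omega_S\oplus(\omega_S\otimes L)$, whence
\begin{equation*}
p_g(T)=p_g(S)+h^0(\omega_S\otimes L), \qquad q(T)=q(S)+h^1(\OO_S(-L)).
\end{equation*}
The first identity already yields $p_g(S)\leq p_g(T)$; it remains to prove $h^0(\omega_S\otimes L)=p_g(S)+1-\tfrac{d}{2}$.

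I would do this by Riemann--Roch and Serre duality. Each $F_i$ is a $(-2)$-curve, so $K_S\cdot L=\tfrac12 K_S\cdot B=0$ and $L^2=B^2/4=-d$; Riemann--Roch then gives $\chi(\omega_S\otimes L)=\chi(\OO_S)-\tfrac{d}{2}=1-q(S)+p_g(S)-\tfrac{d}{2}$. Serre duality yields $h^2(\omega_S\otimes L)=h^0(\OO_S(-L))=0$, since an effective $E\sim -L$ would produce $2E+B\sim 0$ with $2E+B$ effective and nonzero. Also $h^1(\omega_S\otimes L)=h^1(\OO_S(-L))=q(T)-q(S)$ from the second identity above.

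The last ingredient, and the step that makes the final answer come out right, is the identity $q(T)=2q(S)$. By Lemma \ref{q=g} this amounts to $q(Y)=2g(C')$, which follows from the Hodge-theoretic computation
\begin{equation*}
H^1(Y,\OO_Y)=H^1(C\times C,\OO)^{G^0}=H^{0,1}(C)^{G^0}\oplus H^{0,1}(C)^{\varphi(G^0)}\cong \bbC^{2g(C')},
\end{equation*}
using that $\varphi\in\Aut(G^0)$, so both summands contribute $g(C')$-dimensional invariants. Substituting $h^1(\OO_S(-L))=q(S)$ into $h^0=\chi+h^1-h^2$ then gives precisely $p_g(S)+1-\tfrac{d}{2}$, completing the proof.
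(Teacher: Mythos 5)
Your argument is correct, but it takes a genuinely different route from the paper's. The paper stays purely numerical: using the relations $e(T')=2e(S)-4d$, $e(T)=e(T')-2d$ and $K_T^2=2K_S^2$ (the last two already established in (\ref{eq1}) and (\ref{eq2})), Noether's formula gives $\chi(\OO_T)=2\chi(\OO_S)-\frac{d}{2}$, and $p_g(T)$ is then extracted from the standard irregularity formula $q(T)=2g(C/G^0)=2q(S)$ for the product-quotient surface $T$, while $p_g(S)\le p_g(T)$ comes from pulling back $2$-forms. You instead work sheaf-theoretically: the eigensheaf decompositions $\tilde\pi_*\OO_{T'}=\OO_S\oplus L^{-1}$ and $\tilde\pi_*\omega_{T'}=\omega_S\oplus(\omega_S\otimes L)$, Riemann--Roch and Serre duality for $\omega_S\otimes L$, and an independent K\"unneth/invariants computation of $q(Y)=2g(C')$ rather than a citation. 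Your approach buys more refined information --- it identifies the difference $p_g(T)-p_g(S)$ as $h^0(\omega_S\otimes L)$, the anti-invariant part of the $2$-forms on $T'$ --- at the cost of invoking the structure theory of flat double covers, of which the paper only uses the numerical consequence $K_{T'}=\tilde\pi^*(K_S+F/2)$ from \cite{CD}; the paper's Noether-formula bookkeeping is shorter. Two small points to tidy: $L$ must be the square root of $\OO_S(B)$ belonging to the cover data (not an arbitrary one) for the push-forward formulas to apply; and your vanishing $h^0(\OO_S(-L))=0$ as written needs $B\neq 0$, so in the case $d=0$ you should argue instead that $L\not\cong\OO_S$ because $T'$ is connected (a trivial square root would split the \'etale double cover). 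With these remarks the proof is complete and gives the same formula $p_g(T)=2p_g(S)+1-\frac{d}{2}$.
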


\begin{proof}
Let $\epsilon \colon T'\rightarrow T$ be the blow-up of $T$ in the $2d$ points fixed by $\mu$; we have a double cover
 $\tilde{\pi}\colon T'\rightarrow S$  branched along $2d$ smooth pairwise disjoint rational curves. Pulling back the forms on $S$ to forms on $T'$ we note that
 $p_g(S) \leq p_g(T')=p_g(T)$. Moreover 
$e(T')=2e(S)-4d$, $e(T)=e(T')-2d=2e(S)-6d$ and $K^2_T=2K^2_S$, by (\ref{eq1}) and (\ref{eq2}).
By Noether's formula:
\begin{equation*}
\chi(\OO_T)=\frac{1}{12}(K^2_T+e(T))=\frac{1}{12}(2K^2_S+2e(S)-6d)=2\chi(\OO_S)-\frac{d}{2}
\end{equation*}

\noindent Since $T\rightarrow Y$ is a product-quotient surface, $q(T)=2g(C/G^0)=2q(S)$ and 
\begin{equation*}
p_g(T)=2+2p_g(S)-2q(S)-\frac{d}{2} + q(T)-1=2p_g(S)+1-\frac{d}{2}.
\end{equation*}
 \end{proof}

\section{The Albanese fibre of a mixed \qe surface with irregularity 1}\label{TAM}

The Albanese map of a surface of general type $S$ with irregularity $1$ is a fibration onto the elliptic curve 
$\alb(S)$. The genus $g_{alb}$ of the general Albanese fibre is a deformation invariant, which is very important from the point of view of the geography of  surfaces of general type. In this section we show how to compute $g_{alb}$ for mixed \qe surfaces.

Let $S\stackrel{\rho}{\rightarrow} X=(C\times C)/G$ be a mixed \qe surface with $q(S)=1$.
By Lemma \ref{q=g}, $C'=C/G^0$ is an elliptic curve, so in this section we will set $E:=C'$. %, to remind that it is elliptic. 
We have the following commutative diagram:	
\begin{equation}\label{albdiag}
\xymatrix{
                          	    &C\times C \ar[r]^Q\ar[d]_\varsigma                   & E\times E\ar[d]^\epsilon\\
S\ar[r]^\rho \ar[rrd]^\alpha\ar[rd]_f  & X \ar[r]                                &E^{(2)}\ar[d]^{\tilde{\alpha}}\\
																& \alb(S) \ar[r]_\psi& E
}
\end{equation}
where $\tilde{\alpha}$ is  the  Abel-Jacobi map.
By the properties of the Albanese torus (see \cite[Proposition I.13.9]{BHPV}), 
 the Stein factorization of $\alpha$ is given by the Albanese map $f\colon S\rightarrow \alb(S)$ and a (unique)
homomorphism $\psi\colon \alb(S) \rightarrow E$. 

The Galois cover $c\colon C\rightarrow E$ has branching set $B:=\{p_1,\ldots, p_r\}$; 
up to translation we may assume that the neutral element $0$ of $E$  is not in $B$, and that 
$-p_i\notin B$ for each $i\in\{1,\ldots, r\}$.

Let $E':=\epsilon^*(\tilde{\alpha}^*(0))=\{(x,-x) \mid x\in E\}\cong E$, consider
$F^*:=Q^*(E')$ and let $F:=\alpha^*(0)$. Note that $\rho(F)=\varsigma(F^*)$.
Our assumption $-p_i\notin B$ ensures that $F^*$ and $F$ are smooth, and the arithmetic genus of $F$ can be easily computed by 
Hurwitz's Formula, see equation (\ref{Halb}) below.  %more precisely it equals the genus of $C$, the proof of Proposition \ref{galb}
$F$ is the disjoint union of $\deg \psi$ fibres of the Albanese map, so to compute $g_{alb}$ we need to compute $\deg \psi$ first.

We will need the points $q_i:=(p_i, -p_i)$ and $q'_i:=(-p_i,p_i)$ of $E'$; we set $B':=\{q_i,q_i'\}_i$. 
We note that $0'=(0,0)\in E'\setminus B'$.

\noindent By Remark \ref{algdata}, given suitable loops $\alpha,\beta,\gamma_1,\ldots,\gamma_r\in \pi_1(E\setminus B,0)$,  
the cover $c \colon C \rightarrow E$ is determined by a generating vector $(a,b;h_1, \ldots, h_r)$ of $G^0$, representing 
the monodromy map $\mu \colon \pi_1(E\setminus B,0) \rightarrow G^0$ of $c$.

Since $Q=c \times c$, the monodromy map of the $G^0 \times G^0$-cover $Q$ is given by two copies of $\mu$. 
$Q$ induces by restriction the $G^0 \times G^0$-cover $F^*\rightarrow E'$, whose branching locus is $B'$. 
To describe its monodromy map we choose generators
$\delta,\theta,\gamma'_1,\ldots,\gamma'_r,\gamma''_1,\ldots,\gamma''_r \in \pi_1(E'\setminus B',0')$ 
as follows:
\begin{itemize}
\item $\delta=(\alpha, -\alpha)$  
\item $\theta=(\beta, -\beta)$  
\item $\gamma'_i=(\gamma_i,-\gamma_i)$ are geometric loops around $q_i$  
\item $\gamma''_i=(-\gamma_i,\gamma_i)$ are geometric loops around $q'_i$  
\end{itemize} 
Please note that we need some care in the choice of the loops $\alpha,\beta,\gamma_i$ to ensure that $\delta,\theta,\gamma'_i$ and $\gamma''_i$  do not 
meet $B'$. 

Moreover, the class of  $\delta$,  $\theta$, $\gamma'_i$ and $\gamma''_i$ in $\pi_1(E'\setminus B',0')$ depends on the choice of the loops 
$\alpha,\beta,\gamma_i$ and not only on their class in $\pi_1(E\setminus B,0)$. Anyway, the classes of $\delta,\theta,\gamma'_i$ and $\gamma''_i$
generate $\pi_1(E'\setminus B',0')$ and the monodromy map of
$Q_{|F^*}\colon F^* \rightarrow E'$ is the unique homomorphism $\pi_1(E'\setminus B', 0')\stackrel{\mu'}{\longrightarrow} G^0\times G^0$ such that 
\begin{equation}\label{mon}
\begin{array}{ccc}
\delta &\stackrel{\mu'}{\longmapsto}& (a,a^{-1})\,,\\
\gamma'_i &\stackrel{\mu'}{\longmapsto}& (h_i, 1)\,, \\
\end{array} \qquad \qquad
\begin{array}{ccc}
\theta &\stackrel{\mu'}{\longmapsto}& (b,b^{-1})\,,\\
\gamma''_i &\stackrel{\mu'}{\longmapsto}& (1,h_i)\,.\\
\end{array}
\end{equation}

%\begin{array}{ccc}
%\delta &\stackrel{\mu'}{\longmapsto}& (a,a^{-1})\\
%\theta &\stackrel{\mu'}{\longmapsto}& (b,b^{-1})\\
%\gamma'_i &\stackrel{\mu'}{\longmapsto}& (h_i, 1) \\
%\gamma''_i &\stackrel{\mu'}{\longmapsto}& (1,h_i)\\
%\end{array}

\begin{rem} \label{defM}
1) We note that the index of $\im(\mu')$ in $G^0\times G^0$  equals the number of connected components of $F^*$.

2) Fixed $\tau' \in G\setminus G^0$, let $\tau:=\tau'^2 \in G^0$
and  $\varphi\in \Aut(G^0)$ defined  by $\varphi(h):=\tau' h \tau'^{-1}$.
We define the following action of  $G$ on $G^0\times G^0$: %, that has the same form of the one in (\ref{action}):
\begin{equation}\label{action2}
\begin{split}
g(h_1, h_2) &= (gh_1, \varphi( g)h_2)\\ 
\tau'g(h_1, h_2) &=(\varphi(g)h_2, \tau g \,h_1)
\end{split}\qquad for \,\,g \in G^0
\end{equation}

We define 
$$M:= \left|\bigcup_{g \in G} g\, \im{(\mu')}\right|\,.$$

\end{rem}

\begin{lem}\label{compalb}
Let $S$ be a mixed \qe surface with $q(S)=1$.
Then 
$\deg \psi= \dfrac{|G^0|^2}{M}$.
\end{lem}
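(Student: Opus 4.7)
The plan is to count connected components of $F = \alpha^{*}(0)$, since $\deg \psi$ equals this count: the Albanese map $f\colon S\to\alb(S)$ has connected fibres, so $F$ is the disjoint union of $\deg \psi$ such fibres. As $\rho$ is birational, the number of components of $F$ equals the number of components of $\rho(F) = \varsigma(F^*)$, which equals the number of $G$-orbits on $\pi_0(F^*)$, the set of connected components of $F^*$.

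First I would identify $\pi_0(F^*)$ with the set of left cosets of $H := \im(\mu')$ in $G^0 \times G^0$. Since $Q\colon C\times C \to E \times E$ is Galois with group $G^0 \times G^0$ and $F^* \to E'$ is \'etale over $E' \setminus B'$, Riemann's existence theorem yields a natural $G^0\times G^0$-equivariant bijection $\pi_0(F^*) \cong (G^0 \times G^0)/H$ sending the component through a chosen lift $\tilde 0 \in Q^{-1}(0,0)$ to the coset $H$. Next I would verify that $H$ is stable under the swap $\sigma\colon (g_1,g_2)\mapsto(g_2,g_1)$ by inspecting (\ref{mon}): $\sigma(a,a^{-1}) = (a,a^{-1})^{-1}$ and $\sigma(b,b^{-1}) = (b,b^{-1})^{-1}$ lie in $H$, while $\sigma(h_i,1) = (1,h_i)$ and $\sigma(1,h_i) = (h_i,1)$ are among its generators. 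Consequently the $G$-action (\ref{action2}) on $G^0\times G^0$ descends to an action on $(G^0 \times G^0)/H$, matching the $G$-action on $\pi_0(F^*)$ inherited from $G \subset \Aut(C \times C)$.

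Finally I would count $G$-orbits. For the coset $H$, its $G$-orbit in $(G^0\times G^0)/H$ has size $M/|H|$, since $\bigcup_{g\in G} g\cdot H$ is the disjoint union of all distinct cosets in this orbit, each of cardinality $|H|$. The main obstacle I anticipate is proving that \emph{every} $G$-orbit on $(G^0 \times G^0)/H$ has this same size $M/|H|$; I expect this to follow from the transitive action of $G^0 \times G^0$ on $(G^0 \times G^0)/H$ combined with the embedding of $G$ in the larger group $\tilde G := \langle G, G^0 \times G^0 \rangle \subset \Aut(C\times C)$. Granted this equal-size property, the number of orbits is $|(G^0\times G^0)/H|\cdot|H|/M = |G^0|^2/M$, yielding the claimed formula.
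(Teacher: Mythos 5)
Your route is the same as the paper's (identify $\pi_0(F^*)$ with the cosets of $H:=\im(\mu')$ in $G^0\times G^0$, let $G$ act via (\ref{action2}), count $G$-orbits), but the step you flag as the ``main obstacle'' is a genuine gap, and the fix you sketch does not work. Knowing that $G^0\times G^0$ acts transitively on the coset space and that $G$ sits inside $\tilde G=\langle G,\,G^0\times G^0\rangle$ gives no control on the sizes of the $G$-orbits: in general, a subgroup of a group acting transitively has orbits of different lengths (take $\mathfrak{S}_n$ on $n$ points and any subgroup with unequal orbits), and equal lengths would follow only if the image of $G$ in the symmetric group of $\pi_0(F^*)$ were normal in a transitively acting group -- which is not implied by your setup, since $G$ is in general not normal in $\tilde G$. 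Without this equal-size statement your final count $|G^0|^2/M$ is unproven, because $M/|H|$ is a priori only the size of the orbit of the distinguished coset $H$.

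What actually closes the gap is a special feature of $H$ coming from the genus-$1$ base, which you did not use. By (\ref{mon}), $H$ contains $(h_i,1)$ and $(1,h_i)$; since the first and second projections of $H$ are onto $G^0$, conjugating inside $H$ gives $K\times K\subseteq H$, where $K\triangleleft G^0$ is the normal closure of $h_1,\dots,h_r$. The quotient $\bar\Gamma:=G^0/K$ is abelian (it is generated by $\bar a,\bar b$ with $[\bar a,\bar b]=1$, being a quotient of $\pi_1(E)\cong\bbZ^2$), and modulo $K\times K$ the remaining generators $(a,a^{-1}),(b,b^{-1})$ generate the antidiagonal; hence $H=\ker\big((u,v)\mapsto \bar u\,\bar v\in\bar\Gamma\big)$ is \emph{normal} in $G^0\times G^0$ with abelian quotient $\bar\Gamma$. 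Under the resulting identification $\pi_0(F^*)\cong\bar\Gamma$, the action (\ref{action2}) is by translations: $g\in G^0$ translates by $\bar g\,\overline{\varphi(g)}$ and $\tau'$ translates by $\bar\tau$. So $G$ acts through a subgroup of translations of the abelian group $\bar\Gamma$, all orbits are cosets of one and the same subgroup, of common size $M/|H|$, and your computation $\deg\psi=|G^0|^2/M$ goes through. (The paper's own proof also asserts the corresponding step -- that each component of $\varsigma(F^*)$ receives exactly $M$ points of the fibre -- without spelling this out; the normality of $\im(\mu')$ is what makes it legitimate, so your proposal and the paper stand or fall at the same point, and this is how to make both precise.)
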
	

\begin{proof}
Let $u\in E'$.
The action of $G^0 \times G^0$ on $Q^{-1}(u)$ induces a bijection  between $G^0\times G^0$ and $Q^{-1}(u)$;
%%Let $u\in E'$ and let $\lambda$ be a path in $E'$ from $u$ to $-u$.
%%Two points of $Q^{-1}(\{u,-u\})$ belong to the 
%%same connected component of $F^*$ if and only if the corresponding elements in $G^0\times G^0$  
%%differ by an element in $\im(\mu')$ and/or a lift of $\lambda$.\\
two points of $Q^{-1}(u)$ belong to the 
same connected component of $F^*$ if and only if the corresponding elements in $G^0\times G^0$  
differ by an element in $\im(\mu')$.\\
Moreover, two points $h,\,h'\in F^*$ map to the same point of $X$ if and only if
 there exists $g\in G$ \sts $g (h')= h$.
So exactly $M$ points of $Q^{-1}(u)$
are mapped into each connected
component of $\varsigma (F^*)$. We conclude since $\deg \psi$ equals  the number of connected components of $F$.
\end{proof}

\begin{prop}\label{galb}
Let $S$ be a mixed \qe surface with $q(S)=1$, then
$$g_{alb}= 1+\dfrac{g(C)-1}{|G^0|^2}M\,.$$
\end{prop}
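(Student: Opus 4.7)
The plan is to compute the total Euler characteristic of $F^{*}$ in two different ways and then solve for $g_{alb}$ by comparison.

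First, I would apply Hurwitz's formula to the Galois cover $Q|_{F^{*}}\colon F^{*}\to E'$. This is a $(G^0\times G^0)$-cover of a genus-$1$ base, branched over the $2r$ points of $B'$; by (\ref{mon}) the local monodromy has order $m_i=\ord(h_i)$ at both $q_i$ and $q_i'$. Writing $N$ for the number of connected components of $F^{*}$ and $g(F^{*})=\sum_k g(F_k^{*})$ for the sum of the genera of its components, Hurwitz gives
$$2g(F^{*})-2N \;=\; 2|G^0|^2\sum_{i=1}^r\frac{m_i-1}{m_i}.$$
Substituting the Hurwitz identity $\sum_i (m_i-1)/m_i=2(g(C)-1)/|G^0|$ coming from $c\colon C\to E$ collapses this to $e(F^{*})=-4|G^0|(g(C)-1)$.

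Next, I would show that the restriction $\varsigma|_{F^{*}}\colon F^{*}\to F_X:=\varsigma(F^{*})$ is \'etale of degree $|G|$. First, $F^{*}$ is $G$-invariant: the $G^0$-action descends trivially to $E\times E$, while $\tau'$ induces the swap $(x,y)\mapsto(y,x)$, which preserves $E'=\{(x,-x)\}$ and hence $F^{*}=Q^{-1}(E')$. Second, the hypothesis $-p_i\notin B$ for every $i$ ensures that $E'$ misses every point of the form $(p_i,p_j)\in E\times E$; by Proposition~\ref{s-h} (governing the stabilizers for the $G^0$-action) together with the lemma that fixed points of $\iota$ project to some $(p_i,p_i)$ on the diagonal, $F^{*}$ therefore avoids the entire locus of $C\times C$ on which $G$ acts with nontrivial stabilizer. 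Hence $\varsigma|_{F^{*}}$ is \'etale of degree $|G|=2|G^0|$. Since $F_X$ lies in the smooth locus of $X$, the resolution $\rho$ is an isomorphism over $F_X$, so $F\cong F_X$; by Lemma~\ref{compalb}, $F$ is the disjoint union of $\deg\psi=|G^0|^2/M$ smooth connected curves, each of genus $g_{alb}$, yielding $e(F)=(2|G^0|^2/M)(1-g_{alb})$.

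Finally, the multiplicativity of Euler characteristic under \'etale covers gives $e(F^{*})=|G|\,e(F)$, so
$$-4|G^0|(g(C)-1)\;=\;2|G^0|\cdot \frac{2|G^0|^2}{M}(1-g_{alb}),$$
which rearranges immediately to the claimed formula. The main delicate point is the second paragraph: one must confirm both the $G$-invariance of $F^{*}$ and that the conditions $0\notin B$ and $-p_i\notin B$ really force $F^{*}$ to miss the \emph{entire} non-free locus of $G$ on $C\times C$, not merely the ramification locus of $Q$; this is what ensures the quotient map $\varsigma|_{F^{*}}$ is genuinely unramified and lets Hurwitz reduce to plain multiplicativity of Euler characteristic.
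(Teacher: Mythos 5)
Your proof is correct and follows essentially the same route as the paper: Hurwitz's formula applied to $F^*\to E'$ (the paper's equation (\ref{Halb})), division of $e(F^*)$ by $|G|$ using that the $G$-action is free on $F^*$, and Lemma \ref{compalb} to count the components of $F$. The only difference is that you spell out explicitly why $F^*$ is $G$-invariant and misses the non-free locus (via $-p_i\notin B$ together with Proposition \ref{s-h} and the lemma on fixed points of $\iota$), a point the paper absorbs into its setup and the phrase ``the $G$-cover $\varsigma$ is quasi-\'etale''.
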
	
	
\begin{proof}
Let us look at diagram (\ref{albdiag}). 
Since $G^0$ is $(1;m_1, \ldots, m_r)$-generated, then $\displaystyle{e(C)=-|G^0|\sum_{i=1}^r\bigg(\frac{m_i-1}{m_i}\bigg)}$.
The $(G^0\times G^0)$-cover $Q$ is branched exactly along the union of $r$ \q{horizontal} copies of $E$ and $r$
\q{vertical} copies of $E$; moreover for each $i$ there are one horizontal copy and one vertical copy 
with branching index $m_i$. Since $E'$ is an elliptic curve that intersects all these copies of $E$ transversally 
in one point, by the Hurwitz's formula applied to $F^*\rightarrow E'$ we obtain 
\begin{equation}\label{Halb}
e(F^*)=-|G^0|^2\sum_{i=1}^r 2\bigg(\dfrac{m_i-1}{m_i}\bigg)\,. 
\end{equation}
On the other hand, the $G$-cover $\varsigma$ is \qe and we get
$$e(F)=\dfrac{e(F^*)}{|G|}=-|G^0|\sum_{i=1}^r \bigg(\dfrac{m_i-1}{m_i}\bigg)= e(C)\,.$$

By Lemma \ref{compalb}, $F$ is the disjoint union of $\deg \psi=\dfrac{|G^0|^2}{M}$ curves of genus $g_{alb}$, and therefore 
$$2-2g(C)=e(F)=\dfrac{|G^0|^2}{M}(2-2g_{alb})\,.$$
\end{proof}	

\section{The minimal model}\label{DTMM}

In this section we want to determine the minimal model of the surfaces we construct. 
We start by recalling some useful results:

\begin{lem}[{\cite[Proposition 1]{Bom}}]
On a smooth surface $S$ of general type every irreducible curve $C$
satisfies $K_S.C\geq -1$.
\end{lem}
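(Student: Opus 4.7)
The plan is to argue by contradiction: suppose some irreducible curve $C$ on $S$ satisfies $K_S \cdot C \leq -2$. The strategy combines two classical ingredients, the adjunction formula applied to $C$ itself and the bigness of $K_S$ that comes from $S$ being of general type.

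First, adjunction $2p_a(C) - 2 = C^2 + K_S \cdot C$ together with $p_a(C) \geq 0$ and the standing assumption $K_S \cdot C \leq -2$ gives
\[
C^2 \;=\; 2p_a(C) - 2 - K_S \cdot C \;\geq\; -2 - K_S \cdot C \;\geq\; 0,
\]
so $C$ has non-negative self-intersection. This is the key positivity input: an irreducible curve with non-negative self-intersection meets every distinct irreducible curve non-negatively, hence is in particular nef against every effective divisor sharing no component with it.

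Second, I would exploit that $K_S$ is big (which is the defining property of general type), so $h^0(mK_S) > 0$ for some integer $m \geq 1$. Pick an effective divisor $D \in |mK_S|$ and decompose $D = aC + D'$, where $a \geq 0$ is the multiplicity of $C$ in $D$ and $D'$ is an effective divisor with no irreducible component equal to $C$. Intersecting with $C$, the first step gives $aC^2 \geq 0$, and $D' \cdot C \geq 0$ since each component of $D'$ is an irreducible curve different from $C$. Hence
\[
m\,(K_S \cdot C) \;=\; D \cdot C \;=\; aC^2 + D'\cdot C \;\geq\; 0,
\]
contradicting the assumption $K_S\cdot C\leq -2$.

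There is no real obstacle in this proof; it reduces to a one-line intersection computation once both tools (adjunction and an effective pluricanonical divisor) are in place. It is worth remarking that the bound is sharp, attained precisely by $(-1)$-curves: the same argument shows that if $K_S \cdot C = -1$ then $C^2 \geq 0$ would again force $K_S \cdot C \geq 0$, so $C^2 \leq -1$, and adjunction then pins down $p_a(C) = 0$ and $C^2 = -1$.
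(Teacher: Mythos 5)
Your proof is correct. The paper offers no argument of its own here (it simply quotes the statement from Bombieri's Proposition 1), and your two-step argument -- adjunction forcing $C^2 \geq 0$ whenever $K_S\cdot C \leq -2$, then intersecting $C$ with an effective pluricanonical divisor written as $aC + D'$ to get $m(K_S\cdot C)\geq 0$ -- is exactly the classical proof of that result; your closing remark correctly pins down the extremal case $K_S\cdot C=-1$ as the $(-1)$-curves.
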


\begin{lem}[{\cite[Remark 4.3]{BP10}}]\label{BPsr}
On a smooth surface $S$ of general type every irreducible curve $C$
with $K_S.C\leq 0$ is smooth and rational.
\end{lem}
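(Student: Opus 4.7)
The plan is to reduce to the minimal model. Let $\mu\colon S \to S_{\min}$ denote the contraction to the minimal model of $S$, and write $K_S = \mu^* K_{S_{\min}} + \sum_i a_i E_i$, where each $E_i$ is a smooth rational component of the $\mu$-exceptional locus and each $a_i > 0$. Since $S_{\min}$ is minimal of general type, $K_{S_{\min}}$ is nef and big, with $K_{S_{\min}}^2 > 0$. If $C$ coincides with some $E_i$, there is nothing to prove, so I assume $C$ is not $\mu$-exceptional; in this case $E_i \cdot C \geq 0$ for every $i$.

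Set $\tilde C := \mu_* C$. From the decomposition of $K_S$ one obtains
\[
K_S \cdot C \,=\, K_{S_{\min}} \cdot \tilde C + \sum_i a_i \, E_i \cdot C \,\geq\, K_{S_{\min}} \cdot \tilde C \,\geq\, 0,
\]
the last inequality by nefness of $K_{S_{\min}}$. Combining with the hypothesis $K_S \cdot C \leq 0$ forces all these quantities to vanish: $K_{S_{\min}} \cdot \tilde C = 0$ and $E_i \cdot C = 0$ for every $i$.

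Now apply the Hodge index theorem to $K_{S_{\min}}$ (which has positive square) and $\tilde C$ (which is nonzero in the N\'eron--Severi group and orthogonal to $K_{S_{\min}}$): it yields $\tilde C^2 < 0$. Adjunction on $S_{\min}$ then gives $2 p_a(\tilde C) - 2 = \tilde C^2 < 0$, hence $p_a(\tilde C) = 0$ and $\tilde C^2 = -2$; in particular $\tilde C$ is a smooth rational $(-2)$-curve. Since $C$ is disjoint from the $\mu$-exceptional locus, the restriction $\mu|_C \colon C \to \tilde C$ is an isomorphism, so $C$ itself is smooth and rational.

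The argument is almost entirely formal; the main input is the nefness and bigness of $K_{S_{\min}}$, which rely essentially on $S$ being of general type (so that the Cone theorem for surfaces gives nefness after contracting $(-1)$-curves, and $K_{S_{\min}}^2 > 0$). I do not expect any genuine obstacle beyond correctly handling the separation of the exceptional and non-exceptional cases.
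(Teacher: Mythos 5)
Your argument is correct and complete. Note that the paper itself does not prove this statement: it is quoted verbatim from \cite[Remark 4.3]{BP10}, so there is no in-paper proof to compare against. Your route --- writing $K_S=\mu^*K_{S_{\min}}+\sum_i a_iE_i$ with $a_i>0$, disposing of the case where $C$ is $\mu$-exceptional, deducing from $K_S\cdot C\le 0$ that $K_{S_{\min}}\cdot\tilde C=0$ and $E_i\cdot C=0$, and then combining the Hodge index theorem (using $K_{S_{\min}}^2>0$ and $\tilde C$ effective, hence not numerically trivial) with adjunction to get $p_a(\tilde C)=0$, $\tilde C^2=-2$, and finally $C\cong\tilde C$ since $C$ misses the exceptional locus --- is the standard proof of this fact and all steps check out; in particular it correctly covers the boundary case $K_S\cdot C=-1$, where $C$ is forced to be one of the exceptional curves. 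The only cosmetic remark is that in the non-exceptional case your argument actually yields the stronger conclusion that $C$ is a $(-2)$-curve pulled back from the minimal model, which is more than the lemma asks for but does no harm.
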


\begin{prop}\label{234-int}
Let $S$ be a smooth surface of general type.
If $E$, $C$ are distinct smooth rational curves with $E^2=-1$,  $C^2\geq -4$, then $C.E\leq 1$.
\end{prop}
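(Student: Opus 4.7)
The plan is to argue by contradiction: assume $m := C \cdot E \geq 2$ and contract $E$ to reach a contradiction on a simpler surface. Concretely, I would let $\pi \colon S \to S'$ be the contraction of the $(-1)$-curve $E$ to a point $p$, so that $S'$ is again a smooth surface of general type and $K_S = \pi^* K_{S'} + E$. Since $C$ is irreducible and distinct from $E$, its image $C' := \pi(C)$ is an irreducible curve on $S'$, and $\pi^* C' = C + kE$ for some $k \geq 0$. Intersecting with $E$ and using $\pi^* C' \cdot E = 0$ together with $E^2 = -1$ gives $k = C \cdot E = m$, so the multiplicity of $C'$ at $p$ equals $m$.

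Next I would transfer the numerical data to $S'$. From $\pi^* C' = C + mE$ one gets $(C')^2 = C^2 + m^2$ and $K_{S'} \cdot C' = K_S \cdot C - m$. Adjunction on $S$ applied to the smooth rational curve $C$ gives $K_S \cdot C = -2 - C^2$, so
$$K_{S'} \cdot C' = -2 - C^2 - m, \qquad p_a(C') = \binom{m}{2},$$
where the second identity follows from adjunction on $S'$. In particular the standing assumption $m \geq 2$ forces $p_a(C') \geq 1$, so $C'$ cannot be smooth rational.

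The conclusion would then come from combining the two hypotheses $m \geq 2$ and $C^2 \geq -4$ in the formula above:
$$K_{S'} \cdot C' \;=\; -2 - C^2 - m \;\leq\; -2 + 4 - 2 \;=\; 0.$$
Since $S'$ is of general type and $C'$ is an irreducible curve on it with $K_{S'} \cdot C' \leq 0$, Lemma \ref{BPsr} forces $C'$ to be smooth and rational, contradicting $p_a(C') \geq 1$. I do not anticipate any serious obstacle: the only point needing care is the routine bookkeeping that $C'$ is indeed an irreducible curve (not contracted) and that the integer $m$ really is both the intersection number $C \cdot E$ on $S$ and the multiplicity of $C'$ at $p$. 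The hypothesis $C^2 \geq -4$ appears to be put in precisely to handle the borderline case $m = 2$, $C^2 \in \{-3,-4\}$: for $m \geq 4$ the bound $K_{S'} \cdot C' \geq -1$ of Bombieri's lemma alone already rules things out, and for $m = 3$ one needs $C^2 = -4$, where Lemma \ref{BPsr} is what closes the argument.
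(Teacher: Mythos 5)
Your proposal is correct and follows essentially the same route as the paper: contract the $(-1)$-curve $E$, pull back to compute $K_{S'}\cdot C'=-2-C^2-C\cdot E\leq 0$, and play this against Lemma \ref{BPsr}, the only cosmetic difference being that you detect the contradiction via $p_a(C')=\binom{m}{2}\geq 1$ whereas the paper simply notes that $C'$ is singular because $C\cdot E\geq 2$.
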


\begin{proof}
Assume by contradiction $C.E\geq 2$. Let $b\colon S\rightarrow S'$ be the
blow-down given by the contraction of $E$ and set $C':=b(C)$.
By the assumption $C.E\geq 2$, $C'$ is singular, so
by Lemma \ref{BPsr}:
\begin{eqnarray*}
0<K_{S'}.C'&=&(K_S-E).(C+(C.E)E)\\
				 &=& (K_S-E).C\\
				 &=& -C^2-2-E.C \leq 0,	
\end{eqnarray*}

\noindent a contradiction.
\end{proof}

\begin{cor}\label{1-2curve}
Let $S$ be a smooth surface of general type.
Assume that $E$ is a $(-1)$-curve in $S$, then
 $E$ intersects at most one $(-2)$-curve.
\end{cor}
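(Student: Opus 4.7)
The plan is to argue by contradiction: suppose the $(-1)$-curve $E$ meets two distinct $(-2)$-curves $C_1,C_2$ on $S$. Since $C_i^2=-2\ge -4$, Proposition \ref{234-int} applied to each pair $(E,C_i)$ gives $C_i\cdot E\le 1$, and by assumption the intersection is nontrivial, so $C_i\cdot E=1$ (in fact a transverse meeting at a single point, as $E$ and each $C_i$ are smooth with total intersection multiplicity one).

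Next I would contract $E$ via $b\colon S\to S_1$; the surface $S_1$ is still a smooth surface of general type. Setting $C_i':=b(C_i)$, the identity $b^{*}C_i' = C_i+E$ gives
\[(C_i')^2 = C_i^2+(C_i\cdot E)^2=-1 \qquad\text{and}\qquad C_1'\cdot C_2' = (C_1+E)\cdot(C_2+E)=C_1\cdot C_2+1 \ge 1,\]
so $C_1'$ and $C_2'$ are two distinct $(-1)$-curves on $S_1$ meeting at $b(E)$.

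Finally I would contract $C_1'$ via $b'\colon S_1\to S_2$, obtaining once more a smooth surface of general type, and set $D:=b'(C_2')$. Using $K_{S_1}=b'^{\,*}K_{S_2}+C_1'$ together with $K_{S_1}\cdot C_2'=-1$ (adjunction on the $(-1)$-curve $C_2'$), one computes
\[K_{S_2}\cdot D=(K_{S_1}-C_1')\cdot C_2'=-1-C_1'\cdot C_2'\le -2,\]
which contradicts Bombieri's bound $K_{S_2}\cdot D\ge -1$ for irreducible curves on a surface of general type. The whole argument is a two-step blow-down reducing the problem to Bombieri's inequality; the only bookkeeping is how $K$ and intersection numbers transform under contraction of the $(-1)$-curves (and the fact that this operation preserves being a smooth surface of general type), so I do not anticipate any serious obstacle.
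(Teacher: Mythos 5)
Your proof is correct and follows essentially the same route as the paper: apply Proposition \ref{234-int} to get transverse intersections, contract $E$, and observe that two intersecting $(-1)$-curves cannot exist on a surface of general type. The only difference is that you make the last step explicit by a second blow-down and Bombieri's bound $K\cdot C\geq -1$, which the paper leaves as an unstated (standard) fact.
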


\begin{proof}
Suppose $E$ intersects two $(-2)$-curves. By Proposition \ref{234-int}
it intersects each of the $(-2)$-curves  transversally in a point. Then contracting $E$ we get 
two $(-1)$-curves intersecting in a point, which is not possible on a surface of general type. 
\end{proof}

The following is the main result of this section, showing that in the irregular case, the surfaces obtained are automatically 
minimal. 
\begin{thm}\label{minirr}
Let $S$ be an irregular mixed \qe surface of general type, then $S$ is minimal.
\end{thm}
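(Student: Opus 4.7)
The plan is to argue by contradiction: assume $E\subset S$ is a $(-1)$-curve and trace it back through the double cover $\tilde\pi\colon T'\to S$ and the blow-up $\epsilon\colon T'\to T$ constructed in the proof of Proposition \ref{quotbyinv}. The crucial reduction is to show first that the minimal resolution $T$ of $Y=(C\times C)/G^0$ is itself a \emph{minimal} surface of general type; once this is established, lifting $E$ through $\tilde\pi$ and $\epsilon$ will produce a $(-1)$-curve on $T$, yielding a contradiction.

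For the minimality of $T$, I would exploit the two fibrations $\tilde\alpha_i\colon T\to C'$ ($i=1,2$) coming from the projections $\alpha_i\colon Y\to C'$. Since $q(S)=g(C')\geq 1$ by Lemma \ref{q=g}, any rational curve on $T$ is contracted by both $\tilde\alpha_i$. Suppose $E_1\subset T$ were a $(-1)$-curve: then $E_1$ would lie in a fiber of each $\tilde\alpha_i$, and every such fiber decomposes as the union of $\lambda$-exceptional H-J strings (whose components are $(-b)$-curves with $b\geq 2$, hence never $(-1)$) together with the strict transform of the unique irreducible reduced fiber $C/K_i$ of $\alpha_i$. Thus $E_1$ must be such a strict transform. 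But the restriction of $\alpha_2\circ\lambda$ to $E_1$ is the nonconstant map $C/K_i\to C'$ induced by $c\colon C\to C/G^0=C'$, which cannot factor through $E_1\cong\mathbb{P}^1$ once $g(C')\geq 1$, giving a contradiction.

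For the main step, let $E\subset S$ be a $(-1)$-curve. By Corollary \ref{1-2curve}, $E$ meets at most one $(-2)$-curve of $S$; the branch locus of $\tilde\pi$ is a disjoint union of $2d$ smooth rational $(-2)$-curves (from the proof of Proposition \ref{quotbyinv}), so $E$ meets this branch locus in at most one point. On the other hand $\tilde\pi^{-1}(E)\to E\cong\mathbb{P}^1$ is a double cover, so by Riemann--Hurwitz (or because $\pi_1(\mathbb{P}^1)=1$) its ramification has even cardinality. Hence $E$ is disjoint from the branch, and $\tilde\pi^{-1}(E)=E_1\sqcup E_2$ with $E_1,E_2$ smooth rational and swapped by the covering involution; from $\tilde\pi^*(E)=E_1+E_2$ and $(E_1+E_2)^2=2E^2=-2$, one obtains $E_1^2=E_2^2=-1$.

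Finally, since $E$ avoids all the branch $(-2)$-curves on $S$, each $E_i$ avoids the exceptional $(-1)$-curves $A_j$ of $\epsilon$, and so $\epsilon(E_i)$ is a $(-1)$-curve on $T$, contradicting the first step. I expect the most delicate part of the argument to be the minimality of $T$: one must control both types of components appearing in a fiber of $\tilde\alpha_i$ and crucially invoke the \emph{second} fibration $\tilde\alpha_2$ to obstruct the existence of rational fiber components. By contrast, once this is in place, the parity argument combined with Corollary \ref{1-2curve} handles the second step very cleanly.
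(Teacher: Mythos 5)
Your proof is correct and rests on the same key inputs as the paper's proof of Theorem \ref{minirr} --- the double cover $\tilde\pi\colon T'\to S$ branched along disjoint $(-2)$-curves (from the proof of Proposition \ref{quotbyinv}), Corollary \ref{1-2curve} to bound the intersection of $E$ with the branch locus, and the impossibility of a rational curve dominating $C'$, whose genus is $q(S)>0$ by Lemma \ref{q=g} --- but your endgame is different and longer. The paper never descends to $T$ and never proves its minimality: it observes that any component $R$ of $\tilde\pi^*(E)$ is a rational curve which is not exceptional for $T'\to Y$ (it maps onto $E$, whose image in $X$ is a curve, not a point), so its image in $Y$ is a rational curve; since $Q\colon Y\to C'\times C'$ is finite, that image cannot be contracted by both fibrations $\alpha_1,\alpha_2$, hence one of the $\alpha_i$ would restrict to a surjection from a rational curve onto $C'$, a contradiction. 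This shortcut makes your parity argument, the computation $E_1^2=E_2^2=-1$, and the disjointness of the $E_i$ from the $\epsilon$-exceptional curves $A_j$ unnecessary: rationality of the components of $\tilde\pi^*(E)$, which already follows from $E$ meeting the branch locus in at most one point, is all that is needed. What your detour buys is the intermediate statement that $T$ carries no $(-1)$-curves (indeed no rational curves besides the Hirzebruch--Jung components), proved by essentially the same fibration argument the paper applies downstairs on $Y$; and the steps you flag as delicate (the parity count, the identification of fibre components, the behaviour at the blow-up) are all carried out correctly, so your proof stands as written.
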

\begin{proof}
Aiming for a contradiction, let $E$ be a $(-1)$-curve on $S$. 

Consider the intermediate quotient $Y=(C \times C)/G^0$, the minimal resolution of its singularities 
$\lambda \colon T \rightarrow Y$ and the involution $\mu$ on $T$ (Lemma \ref{liftinv}). 
Let $\epsilon \colon T' \rightarrow T$ be the blow up of the fixed points of $\mu$. By Proposition \ref{quotbyinv} and its proof, there is a map
 $\tilde{\pi}\colon T'\rightarrow S$ which is a double cover ramified along the exceptional divisors of  $\epsilon$, so branched along a disjoint union of $(-2)$-curves.
 
\noindent Since $E$ can intersect at most one $(-2)$-curve then $\tilde{\pi}^*(E)$ is union of two 
rational curves; let $R$ be one of them. By construction $R$ is not exceptional for the resolution $T' \rightarrow Y$, 
and therefore one of the fibrations $\alpha_i\colon Y\rightarrow C'$ is a surjective map from a rational 
curve to $C'$, contradicting $g(C')=q(S)>0$.
\end{proof}

In the case $q=0$ we borrow an argument of \cite{BP10}. 
Let $\Gamma \subset X=(C\times C)/G$ be a rational curve.
Let $\Gamma':=(\pi \circ \sigma)^*(\Gamma)=\sum_1^k n_i \Gamma_i$ be
the decomposition in irreducible components of its pull back to $C\times C$.
We observe that, since $\pi \circ \sigma$ is quasi-\'etale, $\forall i$ $n_i=1$ 
and that $G$ acts transitively on the set $\{\Gamma_i \mid i=1,\ldots, k\}$.
Hence there is a subgroup $H\triangleleft G$ of index $k$ acting on $\Gamma_1$
\sts $\pi ( \sigma(\Gamma_1))=\Gamma_1 /H=\Gamma$.

\noindent Normalizing $\Gamma_1$ and $\Gamma$, we get the following 
commutative diagram:
$$\xymatrix{
\tilde{\Gamma}_1\ar[r]^\alpha \ar[d]_f & \Gamma_1 \ar[r]^\beta \ar[d]& C\times C\ar[d]\\
\bbP^1\ar[r]^\nu & \Gamma \,\,\ar@{^(->}[r] & X
}$$
Since each automorphism lifts to the normalization, $H$ acts on $\tilde \Gamma_1$
and $f$ is the quotient map $\tilde \Gamma_1\rightarrow \tilde \Gamma_1 /H\cong \bbP^1$.
Moreover $\beta(\alpha(\tilde\Gamma_1))$ is a curve in $C \times C$, and therefore surjects on $C$, hence $g(\tilde \Gamma_1)\geq g(C)\geq 2$ and so $f$ is branched in at least 3 points.

\begin{lem}
Let $p$ be a branch point of $f$, then $\nu(p)$ is a singular point of $X$.
\end{lem}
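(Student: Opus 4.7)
The plan is to argue by contradiction: assume that $\nu(p)$ is a smooth point of $X$ and deduce that the stabilizer of $p$ in $H$ is trivial, which contradicts the hypothesis that $p$ is a branch point of $f$.

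First I would exploit the fact that the quotient map $\pi \circ \sigma \colon C \times C \to X$ is quasi-\'etale. By Theorem \ref{nonsplit} and the general theory of quasi-\'etale quotients, the set where $\pi \circ \sigma$ fails to be \'etale is exactly the preimage of $\Sing(X)$; equivalently, a point $q \in C \times C$ has non-trivial stabilizer $\Stab_G(q)$ \iif $(\pi \circ \sigma)(q) \in \Sing(X)$. So if $\nu(p)$ were a smooth point of $X$, then $q := \beta(\alpha(p)) \in C \times C$ would satisfy $\Stab_G(q) = \{1\}$.

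Next I would use the $H$-equivariance of the diagram. The subgroup $H \subset G$ acts on $\Gamma_1$, the action lifts uniquely to the normalization $\tilde \Gamma_1$, and $\alpha$ is $H$-equivariant. Since $p$ is a branch point of $f\colon \tilde\Gamma_1 \to \tilde\Gamma_1/H \cong \bbP^1$, the stabilizer $H_p \leq H$ is non-trivial. Every $h \in H_p$ fixes $p$, hence fixes $\alpha(p) \in \Gamma_1$, hence (viewed inside $G$ acting on $C \times C$) fixes $q = \beta(\alpha(p))$. Therefore $H_p \subseteq \Stab_G(q)$.

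Combining the two steps, the assumption that $\nu(p)$ is smooth forces $\Stab_G(q) = 1$, hence $H_p = 1$, contradicting $p$ being a branch point. The argument is essentially formal once one identifies the ramification locus of the quasi-\'etale quotient with the preimage of $\Sing(X)$; there is no genuine obstacle, and the proof is just a short chain of inclusions.
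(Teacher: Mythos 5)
Your argument is correct and essentially the same as the paper's: a nontrivial stabilizer coming from the ramification of $f$ is contained in $\Stab_G$ of the corresponding point of $C\times C$, and since the $G$-action is free outside a finite set (so no quasi-reflections) this forces the image point of $X$ to be singular -- the paper simply states this directly rather than by contradiction. The only slip is notational: the branch point $p$ lies in $\bbP^1\cong\tilde\Gamma_1/H$, where $H$ acts trivially, so you should pass to a ramification point $p'\in f^{-1}(p)\subset\tilde\Gamma_1$, whose $H$-stabilizer is nontrivial, apply $\beta\circ\alpha$ to $p'$ (not to $p$), and conclude via $\nu(p)=\nu(f(p'))$.
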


\begin{proof}
Let $p'\in f^{-1}(p)\subset \tilde \Gamma_1$ be a ramification point of $f$, then $\Stab_H(p'):=H_1\neq\{1\}$ and so
$\Stab_G(\beta(\alpha (p')))\supseteq H_1$. Hence $\nu(f(p'))=\nu(p)\in \Sing(X)$.
\end{proof}

\begin{cor}\label{3point}
Any rational curve in $X$ passes at least 3 times through singular points.
\end{cor}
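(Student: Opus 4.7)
The plan is to deduce the corollary directly from the preceding lemma, using the Riemann--Hurwitz argument that is essentially already set up in the paragraph preceding the lemma. All the non-trivial work has in fact been pushed into the construction of the diagram
\[
\xymatrix{
\tilde\Gamma_1\ar[r]^\alpha\ar[d]_f & \Gamma_1\ar[r]^\beta\ar[d] & C\times C\ar[d]\\
\bbP^1\ar[r]^\nu & \Gamma\,\,\ar@{^(->}[r] & X
}
\]
and into the lemma identifying branch points of $f$ with singular points of $X$ on $\Gamma$.

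First I would re-examine the map $f\colon\tilde\Gamma_1\to\bbP^1$. By the paragraph above the lemma we have $g(\tilde\Gamma_1)\geq g(C)\geq 2$ and $f$ is Galois with group $H$. I apply Riemann--Hurwitz:
\[
2g(\tilde\Gamma_1)-2 \,=\, -2|H|+\sum_{p\in\bbP^1}\frac{|H|}{e_p}(e_p-1),
\]
where $e_p$ denotes the ramification index over $p$. Since the left-hand side is $\geq 2$, the branching term must contribute at least $2|H|+2$. If there were only two branch points $p_1,p_2$, the contribution would be at most $|H|(2-1/e_{p_1}-1/e_{p_2})<2|H|$, and fewer branch points would be even worse; hence $f$ has at least three branch points $p_1,p_2,p_3\in\bbP^1$.

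Next I would invoke the preceding lemma: each branch point $p_i$ of $f$ satisfies $\nu(p_i)\in\Sing(X)$. Since $\nu\colon\bbP^1\to\Gamma$ is the normalization, the preimage $\nu^{-1}(\Sing(X)\cap\Gamma)$ contains the three distinct points $p_1,p_2,p_3$, which is exactly the statement that the rational curve $\Gamma$ passes through singular points of $X$ at least three times (counted on the normalization).

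No serious obstacle is expected: the only point that deserves care is making explicit why Riemann--Hurwitz forces at least three branch points (rather than quoting it), and checking that the lemma applies to each branch point without modification. The rest is purely combinatorial bookkeeping.
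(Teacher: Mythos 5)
Your argument is correct and is essentially the paper's own: the paragraph preceding the lemma already establishes $g(\tilde\Gamma_1)\geq g(C)\geq 2$, hence (by the Riemann--Hurwitz count you spell out) at least three branch points for $f$, and the corollary then follows immediately from the lemma identifying images of branch points with singular points of $X$. You have merely made explicit the Hurwitz computation that the paper leaves implicit, so there is nothing to add.
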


We will need the following consequence of Proposition \ref{234-int}.

\begin{cor}\label{233}
Let $S$ be a smooth surface of general type.
Assume that $E$ is a $(-1)$-curve in $S$, then $E$ cannot intersect three distinct smooth rational curves with 
self-intersection $-2$ or $-3$.
\end{cor}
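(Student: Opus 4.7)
The plan is to argue by contradiction: suppose $E$ meets three distinct smooth rational curves $C_1, C_2, C_3$ with $C_i^2 \in \{-2,-3\}$. By Proposition \ref{234-int} each $E.C_i \le 1$, so $E.C_i = 1$ and the intersection is transverse. By Corollary \ref{1-2curve}, at most one of the $C_i$ can be a $(-2)$-curve, so I may assume $C_1$ and $C_2$ are $(-3)$-curves.

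I would then contract $E$ via $b \colon S \to S'$. Each $C_i' := b(C_i)$ is smooth rational with $(C_i')^2 = C_i^2 + 1$ and $C_i'.C_j' = C_i.C_j + 1 \ge 1$, and all three pass through $P := b(E)$; moreover $S'$ is again of general type. Thus $C_1'$ and $C_2'$ are $(-2)$-curves while $(C_3')^2 \in \{-1,-2\}$. If $C_3$ is a $(-2)$-curve then $C_3'$ is a $(-1)$-curve on $S'$ meeting two distinct $(-2)$-curves, contradicting Corollary \ref{1-2curve} applied to $S'$; this closes the subcase in which some $C_i$ has self-intersection $-2$.

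The remaining case, in which all three $C_i$ are $(-3)$-curves and hence $C_1', C_2', C_3'$ are three $(-2)$-curves through the single point $P$, is the main obstacle, since there is no longer an obvious $(-1)$-curve to feed into Corollary \ref{1-2curve}. To dispose of it I would descend to a minimal model $\sigma \colon S' \to S_0$ by contracting $(-1)$-curves one at a time. At every stage Corollary \ref{1-2curve} guarantees that any $(-1)$-curve $F$ meets at most one of the current images of the three distinguished curves. If $F$ is disjoint from all of them I simply contract $F$ and continue (the configuration of three $(-2)$-curves through the image of $P$ persists, and $F$ cannot pass through $P$ since it misses the $(-2)$-curves containing $P$). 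If instead $F$ meets one of them, say the first, then after contracting $F$ the image of that curve becomes a $(-1)$-curve still meeting the two other $(-2)$-curves at the image of $P$, again contradicting Corollary \ref{1-2curve} on the new surface.

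Hence the descent terminates only by reaching a minimal model $S_0$ of general type carrying three smooth rational $(-2)$-curves $\bar C_1, \bar C_2, \bar C_3$ through a common point and satisfying $\bar C_i.\bar C_j \ge 1$. The effective divisor $\bar D := \bar C_1 + \bar C_2 + \bar C_3$ then has $\bar D^2 \ge -6 + 6 = 0$ and $K_{S_0}.\bar D = 0$. Since $K_{S_0}^2 > 0$, the Hodge index theorem forces $\bar D$ to be numerically trivial, which is incompatible with $\bar D$ being a nonzero effective divisor (its intersection with any ample class would vanish). This final step — reducing the three-$(-2)$-curves-through-a-point configuration to a Hodge-index contradiction on the minimal model — is the heart of the argument.
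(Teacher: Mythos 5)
Your proof is correct and follows essentially the same route as the paper: contract $E$, use Proposition \ref{234-int} and Corollary \ref{1-2curve} to rule out the case where one of the images is a $(-1)$-curve, then descend to the minimal model while showing the three $(-2)$-curves through a common point survive, and derive the final contradiction there. The only difference is at the last step, where the paper cites \cite[Proposition 2]{Bom} for the impossibility of three $(-2)$-curves through a common point on a minimal surface of general type, while you prove it directly via the Hodge index theorem applied to $\bar D=\bar C_1+\bar C_2+\bar C_3$ (using $K_{S_0}\cdot\bar D=0$, $\bar D^2\geq 0$ and $K_{S_0}^2>0$); this is a valid, self-contained substitute for the citation.
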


\begin{proof}
By Proposition \ref{234-int} $E$ intersects each of the three curves transversally in a point.

Contracting $E$ we get three smooth rational curves with self-intersection $-1$ or $-2$ with a common point. If one of them has self-intersection $-1$, by 
Corollary \ref{1-2curve}  a second curve has self-intersection $-1$, and we find two intersecting $(-1)$-curves, which is impossible on a surface of general type.
 So all have self-intersection $-2$.

We pass to the minimal model of $S$ by contracting all possible $(-1)$-curves. If one of the contracted curves  intersected
one of our three $(-2)$-curves, we get the same contradiction as above. So the image of our 
configuration gives three smooth rational curves with self-intersection $-2$ on a minimal surface of general type with a common point.
This is impossible (see {\it e.g.}, \cite[Proposition 2]{Bom}).
\end{proof}

Proposition \ref{234-int} and Corollary \ref{233} imply that, if the basket of singularities of $X$ is simple enough, 
then $S$ is minimal. More precisely

\begin{prop}\label{minimalitycriterion}
Let $S\rightarrow X$ be a mixed \qe surface of general type. Assume one of the following  
\begin{itemize}
\item[i)] either all exceptional curves for $S\rightarrow X$ have self-intersection $-2$ or $-3$
\item[ii)] or  $\BB(X)=\{2\times C_{4,1}, 3 \times C_{2,1}\}$ 
\end{itemize}
Then $S$ is minimal.
\end{prop}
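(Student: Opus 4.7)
The plan is to derive a contradiction from the existence of a $(-1)$-curve $E$ on $S$, using Corollary \ref{3point} together with Proposition \ref{234-int} and Corollary \ref{233} as the main tools. Under either hypothesis no component of the $\rho$-exceptional divisor is itself a $(-1)$-curve (in (i) by direct assumption; in (ii) because each $C_{n,1}$-singularity is resolved by a single smooth rational curve of self-intersection $-n$, with $n\in\{2,4\}$). Hence $E$ is not $\rho$-exceptional and $\Gamma:=\rho(E)\subset X$ is a rational curve. By Corollary \ref{3point} the normalisation of $\Gamma$ has at least three preimages in $\Sing(X)$, which translates into at least three distinct points of $E$ lying on the $\rho$-exceptional divisor. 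Since every exceptional component has self-intersection $\geq-4$, Proposition \ref{234-int} gives $E\cdot F\leq 1$ for every such component $F$, so $E$ must meet at least three distinct $\rho$-exceptional components.

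Case (i) is then immediate: these three components have self-intersection $-2$ or $-3$, and Corollary \ref{233} gives the required contradiction.

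In case (ii) the exceptional divisor consists of three disjoint $(-2)$-curves $D_1,D_2,D_3$ and two disjoint $(-4)$-curves $C_1,C_2$. I would perform a short case analysis on the three (or more) components met by $E$, recording how many $D_j$'s and how many $C_i$'s appear. If three of them are $D_j$'s, Corollary \ref{233} applies. If at least two of them are $D_j$'s, say $D_1$ and $D_2$, contracting $E$ turns their strict transforms into $(-1)$-curves meeting transversally at the image of $E$; a further contraction of one of these yields a smooth rational curve of self-intersection $0$ on a (still general type) surface, hence with $K\cdot C=-2$, contradicting the Bombieri bound $K\cdot C\geq -1$. The only remaining possibility is that $E$ meets exactly one $D_j$ (say $D_1$) together with both $C_1$ and $C_2$. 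Contracting $E$ on $S$ and then the resulting $(-1)$-curve $D_1'$ on $S'$, one obtains a surface $S''$ of general type on which the images $C_1'',C_2''$ are smooth rational $(-2)$-curves with $C_1''\cdot C_2''=2$. The effective divisor $\Delta=C_1''+C_2''$ satisfies $\Delta^2=0$ and $K_{S''}\cdot\Delta=0$; since its intersection matrix is not negative definite, $\Delta$ cannot be entirely contracted by the birational morphism $S''\to S_{\min}$, and pushing it forward gives a non-zero effective class $\overline{\Delta}$ on the minimal model with $K_{S_{\min}}\cdot\overline{\Delta}=0$ and $\overline{\Delta}^2\geq 0$. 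Because $K_{S_{\min}}^2>0$, the Hodge index theorem forces $\overline{\Delta}$ to be numerically trivial, the desired contradiction.

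The main obstacle is precisely this last subcase of (ii), where Corollary \ref{233} is not applicable because only one of the three components met by $E$ is a $(-2)$-curve. The trick is to perform a \emph{second} contraction to expose a pair of $(-2)$-curves meeting twice on a general-type surface, a configuration incompatible with the well-known fact that on a minimal surface of general type the $(-2)$-curves form disjoint ADE configurations in which any two distinct curves meet transversally in at most one point.
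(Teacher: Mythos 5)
Your proof is correct and follows the same skeleton as the paper's: Corollary \ref{3point} together with Proposition \ref{234-int} forces a hypothetical $(-1)$-curve $E$ to meet at least three distinct exceptional components, case (i) is settled by Corollary \ref{233}, and in case (ii) the only surviving configuration is $E$ meeting one $(-2)$-curve and both $(-4)$-curves, which you (like the paper) contract twice to produce two smooth rational $(-2)$-curves meeting with multiplicity $2$ on a still-general-type surface $S''$ (your elimination of the subcase with two $(-2)$-curves is an inline reproof of Corollary \ref{1-2curve}, which the paper simply cites). Where you genuinely diverge is the finish: the paper contracts all further $(-1)$-curves, dealing with their possible interaction with the pair as at the end of the proof of Corollary \ref{233}, and then invokes the known restriction on configurations of $(-2)$-curves on a minimal surface of general type (\cite{Bom}); you instead push $\Delta=C_1''+C_2''$ forward along $f\colon S''\rightarrow S_{\min}$ and apply the Hodge index theorem, which is somewhat more self-contained. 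The one point you should justify is the pair of numerical claims about $\overline{\Delta}=f_*\Delta$, since you only argue its non-vanishing: $\overline{\Delta}^2\geq \Delta^2$ follows from the standard monotonicity of self-intersection under blow-down, $(g_*D)^2=D^2+(D\cdot E)^2$, while $K_{S_{\min}}\cdot\overline{\Delta}=0$ should be checked component by component --- if $C_i''$ is not contracted, then $K_{S_{\min}}\cdot f_*C_i''=(K_{S''}-\mathcal{E})\cdot C_i''\leq 0$ with $\mathcal{E}\geq 0$ exceptional, and nefness of $K_{S_{\min}}$ gives equality --- because the delicate possibility that exactly one of $C_1'',C_2''$ is contracted on the way down to the minimal model (your negative-definiteness remark only excludes contracting both) is not handled by a computation with the whole divisor $\Delta$ at once. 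With these standard justifications inserted, your argument is complete and equivalent in substance to the paper's, the final Hodge-index step being exactly the proof of the ``well-known fact'' about $(-2)$-curves that the paper quotes.
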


\begin{proof}
i) In this case, if $S$ were not minimal, by Corollary \ref{3point} and Proposition \ref{234-int}
then there would be a $(-1)$-curve $E$ which intersects three different smooth rational curves with 
self-intersection $-2$ or $-3$, contradicting Corollary \ref{233}.

ii) In this case the exceptional divisor is given by five rational curves which do not intersect each other,
two of self-intersection $-4$ and three of self-intersection $-2$. 
If $S$ were not minimal, by Corollary \ref{3point}, Proposition \ref{234-int} and Corollary \ref{1-2curve}
the dual graph of the resulting configuration of rational curves would be:
\vspace{-.2cm}
\begin{center}
\scalebox{.7} 
{
\begin{pspicture}(0,-1.5)(8,1.5)
\psdots[dotsize=0.2](0,-1)
\psdots[dotsize=0.2](2,-1)
\psdots[dotsize=0.2](4,-1)
\psdots[dotsize=0.2](2,1)
\psdots[dotsize=0.2](6,-1)
\psdots[dotsize=0.2](8,-1)
\psline[linewidth=0.01cm](0,-1)(2,1)
\psline[linewidth=0.01cm](2,1)(2,-1)
\psline[linewidth=0.01cm](2,1)(4,-1)
\usefont{T1}{ptm}{m}{n}
\rput(2,1.5){-1}
\rput(0,-1.5){-4}
\rput(2,-1.5){-4}
\rput(4,-1.5){-2}
\rput(6,-1.5){-2}
\rput(8,-1.5){-2}
\rput(4,-0.5){$E'$}
\end{pspicture} 
}

\end{center}
After the contraction of the $(-1)$-curve we can also contract $E'$, finding a surface
of general type with two $(-2)$ curves which are tangent in a point. Contracting all possible further 
$(-1)$-curves we find a contradiction as in the end of the proof of Corollary \ref{233} .
\end{proof}

Proposition \ref{minimalitycriterion} is obviously not sharp: we can prove the same result for many
different baskets of singularities by exactly the same argument. We decided to state it in this weak form for 
sake of simplicity, since {\it a posteriori} (inspecting the output of the program we describe in the next section) cases i) and ii) are the only cases that occur for mixed \qe 
surfaces of general type with $p_g=0$ and $K^2>0$.

\section{The classification}\label{FOTC}

We wrote a MAGMA script which computes all mixed \qe surfaces with fixed (input of the script) $p_g$, $q$ and $K^2$.
To write the algorithm we needed to overcome some theoretical problems, namely to find explicit bounds for the basket of singularities $\BB$ and for 
the signatures $(q;m_1,\ldots, m_r)$.

For the basket of singularities, since by Corollary \ref{chiKB} $B(\BB) =24(1-q+p_g)-3K^2$, it is enough to prove
that there are finitely many possible baskets with fixed invariant $B(\BB)$, and show how to  produce the whole list:
\begin{lem}\label{finBas}
Let $B_0\in \bbQ$. Then there are finitely many baskets $\BB$ \sts 
$$B(\BB)=B_0\,.$$
More precisely $|\BB|\leq B_0/3$. Moreover, if $n/a=[b_1,\ldots,b_l]$ then
\begin{enumerate}
	\item $B(C_{n,a}) \geq \sum b_i$;
	\item $B(D_{n,a}) \geq 6+\frac1{2}\sum b_i$. 
\end{enumerate}
\end{lem}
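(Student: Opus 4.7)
\medskip

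My plan is to prove the two inequalities $(1)$ and $(2)$ by a direct computation, and then deduce both the bound $|\BB|\le B_0/3$ and finiteness of baskets as fairly easy consequences.

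First I would evaluate $B(C_{n,a})$ explicitly. Using the definitions
\[
k(C_{n,a})=-2+\tfrac{2+a+a'}{n}+\sum_{i=1}^{l}(b_i-2), \qquad e(C_{n,a})=l+1-\tfrac{1}{n},
\]
a straightforward cancellation of the $\pm 2$, $\pm 2l$ and $\pm 2/n$ contributions gives
\[
B(C_{n,a})=2e+k=\sum_{i=1}^{l}b_i+\frac{a+a'}{n}.
\]
Since $a,a'\ge 1$, this yields $(1)$ at once; and from the identity $B(D_{n,a})=\tfrac12 B(C_{n,a})+6$ recorded just before the lemma, $(2)$ follows immediately.

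Next I would verify that every admissible singular point contributes at least $3$ to $B$. For a cyclic quotient of type $C_{n,a}$ one has $b_i\ge 2$, so $\sum b_i\ge 2l\ge 2$; the only case where $\sum b_i=2$ is $l=1$, $b_1=2$, which forces $n=2,a=a'=1$ and gives $B(C_{2,1})=2+1=3$. In every other case $\sum b_i\ge 3$, and $(a+a')/n>0$ finishes the job, so $B(C_{n,a})\ge 3$. For $D_{n,a}$, the continued fraction $[b_1,\dots,b_m,2b,b_m,\dots,b_1]$ has sum $\ge 2$, so by $(2)$ we even get $B(D_{n,a})\ge 7$. Hence each $B_x\ge 3$, which by additivity of $B$ over $\BB$ yields $|\BB|\le B_0/3$.

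Finally, for finiteness I would argue that for each singularity $x\in\BB$ one has $B_x\le B_0$, and therefore by $(1)$--$(2)$, $\sum b_i\le B_0$ (in the $C$ case) or $\sum b_i\le 2(B_0-6)$ (in the $D$ case). Since the $b_i$ are integers $\ge 2$, there are only finitely many finite sequences $(b_1,\dots,b_l)$ satisfying such a bound; each sequence determines a unique reduced fraction $n/a=[b_1,\dots,b_l]$, hence the types $C_{n,a}$ that can appear are finitely many, and similarly for $D_{n,a}$ (where the continued fraction is moreover palindromic with even middle term). Combined with the bound $|\BB|\le B_0/3$ on the cardinality, this leaves only finitely many possible baskets.

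There is no serious obstacle here; the only step requiring a moment of care is the explicit computation that $B(C_{n,a})=\sum b_i+(a+a')/n$, since it is the cancellation of the several constant and $l$-dependent terms that produces such a clean expression and underlies every subsequent estimate.
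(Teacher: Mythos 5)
Your proof is correct and follows essentially the same route as the paper: both rest on the explicit identity $B(C_{n,a})=\sum b_i+\frac{a+a'}{n}$ together with $B(D_{n,a})=\frac{1}{2}B(C_{n,a})+6$, deduce $B_x\geq 3$ for every singularity to get $|\BB|\leq B_0/3$, and use (1)--(2) to bound the continued fractions $[b_1,\ldots,b_l]$ and hence the possible types. Your write-up simply spells out the cancellation and the $B_x\geq 3$ case check that the paper leaves as a remark.
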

\begin{proof}
We note that $B(C_{n,a})=\frac{a+a'}{n}+\sum b_i \geq 3$,
while $B(D_{n,a})=\frac{B(C_{n,a})}{2}+6 \geq 15/2>3$: this proves $|\BB|\leq B_0/3$,
bounding from above the number of singular points .\\
(1) and (2) are trivial consequences of the definitions of $B(C_{n,a})$ and $B(D_{n,a})$; they show that there are only 
finitely many possible $[b_1,\ldots,b_l]$, so finitely many pairs $(n,a)$.
\end{proof}

The second problem is to bound the possible signatures, once we have fixed $K^2$, $p_g$, $q$ and the basket $\BB$.
We have to find upper bounds for $r$ and for the $m_i$.

\begin{defi}\label{TBX}
Let $\rho\colon S\rightarrow X=(C\times C)/G$ be  a mixed \qe surface.
Let $(q;m_1,\ldots, m_r)$ be the signature of the induced generating vector for $G^0$. 
Let $\BB$ be the basket of singularities of $X$.
 Then we define the following numbers:
\begin{equation*}
\Theta:=2q(S)-2+\sum_{i=1}^r\bigg(\frac{m_i-1}{m_i}\bigg)\,,
\end{equation*}

\begin{equation*}
\beta:= \frac{12 \chi(\OO_S)+k(\BB)-e(\BB)}{3\Theta}\,,
\end{equation*}

\end{defi}

\begin{defi}[{see \cite{YPG}}]
The minimal positive integer $I_x$ \sts $I_xK_X$ is Cartier in a neighborhood of $x\in X$ is called
the \no{index of the singularity} $x$.
The \no{index} of a normal variety $X$ is the minimal positive integer $I$ \sts $IK_X$ is Cartier.
In particular, $I=\lcm_{x\in \Sing(X)} I_x$ depends only on the basket of singularities.
\end{defi}

The index of a singularity of type $C_{n,a}$ is 
$$I_x=\frac{n}{\gcd(n,a+1)}\,.$$

We can now give the bounds we need.

\begin{prop}\label{ineq}
Let $\rho\colon S\rightarrow X=(C\times C)/G$ be  a mixed \qe surface.
Let $(q;m_1,\ldots, m_r)$ be the signature of the induced generating vector for $G^0$. Let $\BB=\BB_C\cup \BB_D$ be the basket of singularities of $X$. Then
\begin{itemize}
	\item[a)] $\Theta >0$ and $\beta=g(C)-1$;
	\item[b)] $r\leq 2\Theta+4(1-q)$;
	\item[c)] $m_i \leq 4\beta +6$;
	\item[d)] each $m_i$ divides $2 \beta I$ where $I$ is the index of $Y$;
	\item[e)]  $m_i\leq \dfrac{2I\beta\Theta+1}{M}$, with $M:=\max\big\{\frac{1}{6}, \frac{r-3+4q}{2}\big\}$;
	\item[f)] except at most $|\BB_C|+|\BB_D|/2$ indices $i$,
         $m_i \leq \dfrac{\beta\Theta+1}{M}$ and divides $\beta$.
\end{itemize}
\end{prop}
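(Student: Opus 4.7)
The plan is to handle the six parts in order, with (a)--(d) reducing to essentially direct computations and (e)--(f) requiring a combination of divisibility and combinatorial constraints coming from the signature.

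For \textbf{(a)}, I would apply Hurwitz's formula to the Galois cover $c\colon C \to C/G^0 = C'$: this gives $|G^0|\Theta = 2(g-1)$, whence $\Theta > 0$. Then I would insert the two formulas of Proposition \ref{Ksquare} into Noether's formula $12\chi(\OO_S) = K^2_S + e(S)$ to obtain
$$12\chi(\OO_S) + k(\BB) - e(\BB) = \frac{12(g-1)^2}{|G|};$$
dividing by $3\Theta$ and using $|G|\Theta = 2|G^0|\Theta = 4(g-1)$, this collapses to $\beta = g-1$. For \textbf{(b)}, use $(m_i-1)/m_i \geq 1/2$ to bound $\Theta \geq 2q-2+r/2$, and rearrange. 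For \textbf{(c)}, the element $h_i$ generates a cyclic subgroup of $\Aut(C)$ of order $m_i$, and Wiman's theorem bounds such subgroups by $4g+2 = 4\beta+6$.

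The key to \textbf{(d)} is intersection theory on $Y$. I would consider the isotrivial fibration $\alpha_1\colon Y \to C'$: since $c$ is ramified with index $m_i$ over $p_i$, the scheme-theoretic fiber satisfies $\alpha_1^*(p_i) = m_i F_i$ for the reduced fiber $F_i$. A general fiber has genus $g$, so $K_Y \cdot \alpha_1^*(p_i) = 2g-2 = 2\beta$, hence $K_Y \cdot F_i = 2\beta/m_i$. Since $IK_Y$ is Cartier and $F_i$ is a curve, $I \cdot K_Y \cdot F_i$ is an integer, which forces $m_i \mid 2I\beta$.

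For \textbf{(e)} I would set $k_j := 2I\beta/m_j \in \bbZ_{>0}$, rewrite the signature identity as $\sum_j k_j = 2I\beta(2q-2+r-\Theta)$, and bound $k_j \leq I\beta$ (from $m_j \geq 2$) for $j \neq i$ to obtain $k_i \geq 2I\beta(M' - \Theta)$ with $M' := (r+4q-3)/2$. A short case analysis combining this with $k_i \geq 1$ yields $m_i \leq (2I\beta\Theta + 1)/M'$: when $2I\beta(M'-\Theta) \leq 1$ use $k_i \geq 1$ and the inequality $2I\beta \leq (2I\beta\Theta+1)/M'$ equivalent to the hypothesis; when $2I\beta(M'-\Theta) > 1$ use $k_i \geq 2I\beta(M'-\Theta)$ and the algebraic identity $(M'-\Theta)(2I\beta\Theta+1) \geq M'$, which rearranges to $\Theta(2I\beta(M'-\Theta) - 1) \geq 0$. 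This is the claimed bound whenever $M' \geq 1/6$; the only remaining case is $(q,r) = (0,3)$, where the triangle signatures must be handled separately, using both the ordering $m_1 \leq m_2 \leq m_3$ (which forces $m_1 \leq 3/(1-\Theta)$) and the divisibility $m_i \mid 2I\beta$ to deduce $m_i \leq 12 I\beta\Theta + 6$.

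Finally, for \textbf{(f)} the observation is that if $F_i$ meets no singular point of $Y$, then $K_Y$ is Cartier near $F_i$ and by adjunction $K_Y \cdot F_i = 2g(F_i) - 2 \in 2\bbZ$; combined with $K_Y \cdot F_i = 2\beta/m_i$ from (d), this gives $m_i \mid \beta$. The bound then follows from exactly the arithmetic manipulation used in (e), now with $\beta$ in place of $2I\beta$. To bound the number of exceptional indices (those $i$ for which $F_i$ meets $\Sing(Y)$), I would invoke Proposition \ref{s-h}: each singularity of $Y$ lies over some $(p_i,p_j) \in C' \times C'$, and $\iota$ pairs the singular point over $(p_i,p_j)$ with the one over $(p_j,p_i)$. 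Since the $\iota$-orbits correspond bijectively to the $|\BB_C|+|\BB_D|$ singularities of $X$, and both projections $\alpha_1, \alpha_2$ share the same set of bad indices by the $\iota$-symmetry, double-counting yields the bound $|\BB_C|+|\BB_D|/2$. The hardest steps will be the triangle-case analysis in (e) and the careful bookkeeping in (f), where the interplay between $\iota$-fixed and $\iota$-swapped singularities must be tracked precisely.
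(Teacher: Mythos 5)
Parts a)--c) are fine and essentially the paper's computations; for d) you give a correct direct argument ($\alpha_1^*(p_i)=m_iF_i$, $K_Y\cdot F_i=2\beta/m_i$, and integrality of $IK_Y\cdot F_i$) where the paper simply cites \cite[Prop. 1.14 d]{BP10}, and your two-case manipulation in e) is valid and does recover $m_i\leq(2I\beta\Theta+1)/M$ whenever $(r+4q-3)/2\geq 1/6$ (the paper gets this more directly from $\Theta+1/m_i\geq M$ together with $m_i\leq 2I\beta$). Two points, however, are not established. The minor one is the triangle case $(q,r)=(0,3)$ in e): you assert $m_i\leq 12I\beta\Theta+6$ from the ordering and divisibility, but the inequality you quote, $m_1\leq 3/(1-\Theta)$, does not yield it; what is actually needed is the observation that $\Theta>0$ forbids two of the $m_j$ from being equal to $2$, whence $\sum_{j\neq i}1/m_j\leq 1/2+1/3$ and $\Theta+1/m_i\geq 1/6$ -- this is exactly how the paper handles that case, and your sketch leaves it unproved.

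The serious gap is the counting in f). Your adjunction argument correctly shows that an index $i$ with $m_i\nmid\beta$ forces the fibre $F_i$ of $\alpha_1$ over $p_i$ to meet $\Sing(Y)$, but the proposed double count cannot give the stated bound $|\BB_C|+|\BB_D|/2$. Each singular point of $Y$ lies on exactly one fibre of $\alpha_1$ and one fibre of $\alpha_2$, and each $\iota$-orbit covers at most two indices; so counting incidences between bad indices and singular points (or $\iota$-orbits) in the way you describe only yields $\#\{\text{bad }i\}\leq|\Sing(Y)|=2|\BB_C|+|\BB_D|$, a factor $2$ (indeed $4$, if one counts orbits) away from the claim, and the $\iota$-symmetry between $\alpha_1$ and $\alpha_2$ gives no extra saving since it identifies the two sets of bad indices rather than doubling the incidences per index. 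The missing ingredient is that a fibre meeting $\Sing(Y)$ must contain \emph{at least two} singular points: the singular points of $Y$ on the reduced fibre $F_i$ are precisely the branch points of the cyclic covering $C\rightarrow F_i\cong C/\varphi(\langle h\rangle)$ (with $\langle h\rangle$ a conjugate of $K_i$), and a cyclic -- indeed any abelian -- cover of a smooth projective curve cannot be branched over a single point, since the loop around a unique branch point is a product of commutators and hence dies in an abelian quotient. With that fact, disjointness of the fibres of $\alpha_1$ gives $\#\{\text{bad }i\}\leq|\Sing(Y)|/2=|\BB_C|+|\BB_D|/2$ as required; this is the content of \cite[Prop. 1.14 e]{BP10}, which the paper invokes at this step. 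As written, your proof of f) does not reach the stated bound.
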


\begin{proof}
a) Since $q(S)=g(C/G^0)$, by Hurwitz's formula:
$$2(g(C)-1)=|G^0|\cdot \Theta \,,$$
hence $\Theta=\dfrac{2(g(C)-1)}{|G^0|}>0$, since $g(C)\geq 2$.
Let $k:=k(\BB)$  and $B:=B(\BB)$. By Corollary \ref{chiKB} and  Proposition \ref{Ksquare}
 we get
$$\beta=\frac{24\chi+3k-B}{6\Theta}=\frac{K_S^2+k}{2\Theta}=\frac{8(g(C)-1)^2}{4\Theta |G^0|}=g(C)-1\,.$$

b) By definition $\Theta\geq 2q-2+\frac{r}{2}$, hence $r\leq 2\Theta -4(q-1)$.

c) Since $m_i=\ord(h_i)$ and $h_i$ is an automorphism of a curve of genus $g\geq 2$, 
by Wiman's Theorem (see \cite{wim}) $m_i\leq 4g+2=4\beta+6$.

d) Since $|\BB(Y)|=2|\BB_C|+|\BB_D|$, the claim follows by \cite[Proposition 1.14, d]{BP10}.

e) We first show 
$$
\Theta+\frac{1}{m_i}\geq\max\left\{\frac{1}{6},\frac{r-3+4q}{2}\right\}.
$$
Since $\displaystyle{\Theta=2q-2+r -\sum_{j=1}^r\frac{1}{m_j}}$, 
we get
$$
\Theta+\frac{1}{m_i}=2q-2+r-\sum_{j\neq i}\frac{1}{m_j} \geq 2q-2+r -\frac{r-1}{2}=\frac{r-3+4q}{2}\,.
$$
Since $\Theta>0$, $\frac{r-3+4q}{2} \geq \frac16$ unless $r = 3$ and $q=0$. In this case, $\Theta>0$ implies that
 at most one $m_i$ can be equal to $2$. Hence also in this case $\Theta+\frac{1}{m_i} \geq 0-2+3-\sum_{j\neq i}\frac{1}{m_j}\geq 1-\frac12 - \frac13= \frac{1}{6}$.

By d), we get $$\left(\max\left\{\frac{1}{6},\frac{r-3+4q}{2}\right\}\right)m_i\leq 1+\Theta \cdot m_i\leq 1+2I\beta\Theta.$$

f) By \cite[Proposition 1.14, e]{BP10}, except for at most $|\BB(Y)|/2=|\BB_C|+|\BB_D|/2$ indices, $m_i$ divides 
$\beta$. From $m_i\leq \beta$, it follows that
$$\left(\max\left\{\frac{1}{6},\frac{r-3+4q}{2}\right\}\right)m_i\leq 1+\Theta m_i\leq 1+\Theta \beta\,.$$
\end{proof}

We used the inequalities proved in this section to produce an algorithm to compute all mixed \qe surfaces with fixed
$p_g, q$ and $K^2$, following the same strategy of the algorithm of \cite{BP10} which computed the product-quotient surfaces
with $p_g=q=0$ (input was just $K^2$). The algorithm uses also the following simple remarks:

\begin{rem}\label{|G|}
By Hurwitz's formula $|G|=2|G^0|=\dfrac{4(g(C)-1)}{\Theta}=\dfrac{4\beta}{\Theta}$.
\end{rem}
\begin{rem}
Let $\rho\colon S\rightarrow X=(C\times C)/G$ be  a mixed \qe surface.
Let $(q;m_1,\ldots, m_r)$ be the signature of the induced generating vector for $G^0$.
If $X$ has a singular point of type  $C_{n,a}$ or $D_{n,a}$, then there exists 
$m_i$ \sts $n$ divides $m_i$.

Indeed, the singular point is the class of a point $(x,y) \in C \times C$
\sts $\Stab_{G^0}(x,y)=\tr{\eta}$ with $o(\eta)=n$.
 $x$ is a ramification point of $c\colon C \rightarrow C/G^0$,
and its ramification index, that equals $|\Stab_{G^0}(x)|$, is one of the $m_i$. 
Since $\eta\in \Stab_{G^0}(x)$, it follows that $n$ divides $m_i$.
\end{rem}

\bigskip

We explain here very briefly the strategy of the algorithm.

Having fixed the values of $K^2_S$, $p_g(S)$ and $q(S)$, by Corollary \ref{chiKB} we know $B(\BB)$, and
Lemma \ref{finBas} gives easily a procedure to produce the finite list of baskets with that invariant $B$. Then, for each basket, we produce the finite list of all signatures $(q;m_1,\ldots,m_r)$ respecting all conditions 
in Proposition \ref{ineq},  including the requirement that $\beta$ is an integer. 

Now, for each basket and for each associated signature, the orders of $G$ and $G^0$ are computed by 
Remark \ref{|G|}. Then the script checks all the finitely many groups $G^0$ of that order, 
and their unsplit degree $2$ extensions $G$. 

Then we have a list of quintuples (basket, signature, $G^0$, generating vector, extension), each quintuple 
gives a family of mixed \qe surfaces (just determined by ($G^0$, generating vector, extension) as explained in 
Remark \ref{algdata}), and all mixed \qe surfaces with the prescribed invariants are here.
Anyway, in this list there are also surfaces with different invariants: those whose singularities does 
not correspond to the basket. Then the script  
 computes these singularities in each case, using the results of section \ref{TBOS}, 
(in particular Propositions \ref{s-h} and \ref{fixi}), and it discards the surfaces with wrong basket.

Moreover, different generating vectors give isomorphic surfaces if they differ by some 
{\it Hurwitz moves}, which are described, in the cases we need, in \cite[Section 5]{Penegini}. 
The script computes this action on the remaining generating vectors, and returns only a representative for each 
orbit. Finally, the script computes, using a result by Armstrong (\cite{Arm65}, \cite{Arm68}), the
fundamental groups (see \cite{Frap11}) of the resulting surfaces.

Our code skips some signatures giving rise to groups of large order, either not covered by
the MAGMA SmallGroup database, or causing extreme computational complexity.
The program returns the list of the skipped cases, which have to be studied separately.

A commented version of the full program can be downloaded from:
$$\mbox{\url{http://www.science.unitn.it/~pignatel/papers/Mixed.magma}}\,$$

Using it, we proved Theorems A, B and C as follows.

{\it Sketch of the proof of Theorems A, B and C.}
By Corollary \ref{chiKB} every mixed \qe surface has $K^2 \leq 8\chi$; so the possible invariants of a minimal 
surface of general type with $\chi=1$ are $K_S^2=1,2,3,4,5,6,7,8$ and, by Beauville's inequality \cite{be82} $p_g \geq 2q-4$, 
$p_g=q=\{0,1,2,3,4\}$. We ran  our program for all these values; it returned the surfaces in Tables 
\ref{tabBigInt0}, \ref{tabBigInt1} and \ref{tabBigInt2}.

\begin{table*}[!ht]\small
\centering
\hspace{-1cm}
\begin{minipage}{0.5\textwidth}
\begin{tabular}{|c|c|c|c|}
\hline
$K^2_S$ & $\Sing X$ & Sign. & $|G^0|$\\
\hline
1 & $2 \times C_{8,1}, C_{4,1}$ &   2,3,8  & 6336 \\
1 & $3\times C_{4,1}, C_{4,3}$ &   2,3,8  & 2304 \\
1 & $C_{8,1},C_{4,1},C_{8,5}$ &   2,3,8  & 4032 \\
1 & $4\times C_{4,1},C_{2,1}$ &   2,3,8  & 2880 \\
1 & $2\times C_{8,3},C_{4,1},C_{2,1}$ &   2,3,8  & 2304 \\
1 & $2\times C_{2,1},C_{8,3},C_{8,1}$ &   2,3,8  & 3744 \\
\hline
2& $2\times C_{8,3}, C_{4,1}$ &   2,3,8   &2880\\
2& $C_{8,3},C_{8,1},C_{2,1}$ &   2,3,8   &4320\\
2& $4\times C_{4,1}$ &   2,4,5   &2400\\
2& $4\times C_{4,1}$ &   2,3,8   &3456\\
2& $C_{8,3},C_{8,5},C_{2,1}$ &   2,3,8   &2016\\
2& $2\times C_{4,1}, 3\times C_{2,1}$ &   2,3,8   &2304\\
2& $2\times C_{4,1}, C_{3,1},C_{3,2}$ &   2,3,8   &2496\\
\hline
3& $2\times C_{4,1}, 2\times C_{2,1}$ &   2,3,8   &2880\\
3& $C_{8,3},C_{8,1}$ &   2,3,8   &4896\\
3& $2\times C_{4,1},C_{5,3}$ &   2,4,5   &2160\\
3& $C_{8,3},C_{8,5}$ &   2,3,8   &2592\\
3& $ C_{4,3},C_{4,1},C_{2,1}$ &   2,3,8   &2304\\
\hline
\multicolumn{4}{c}{}
\end{tabular}
     \end{minipage}
%     \qquad
     \begin{minipage}{0.45\textwidth}
\begin{tabular}{|c|c|c|c|}
\hline
$K^2_S$ & $\Sing X$ & Sign. & $|G^0|$\\
\hline
4 & $C_{4,3},C_{4,1}$ &   2,3,8  & 2880 \\
4 & $4 \times C_{2,1}$ &   2,3,8  & 2304 \\
4 & $C_{3,1},C_{3,2},C_{2,1}$ &   2,3,8  & 2496 \\
4 & $2 \times C_{4,1}, C_{2,1}$ &   2,4,5  & 2400 \\
4 & $2 \times C_{4,1}, C_{2,1}$ &   2,3,8  & 3456 \\
\hline
5& $C_{5,2},C_{2,1}$ &   2,4,5   &2160\\
5& $3\times C_{2,1}$ &   2,3,8   &2880\\
5& $C_{3,1},C_{3,2}$ &   2,3,8   &3072\\
5& $2\times C_{4,1}$ &   2,4,5   &2800\\
5& $2\times C_{4,1}$ &   2,3,8   &4032\\
\hline
6 & $2 \times C_{2,1}$ &   2,4,5   & 2400\\
6 & $2 \times C_{2,1}$ &   2,3,8   & 3456\\
6 & $2 \times C_{5,3}$ &   2,4,5   & 2560\\
\hline
7 & $C_{2,1}$ &   2,3,9   & 2268  \\
7 & $C_{2,1}$ &   2,4,5   & 2800  \\
7 & $C_{2,1}$ &   2,3,8   & 4032  \\
\hline
8 & $\emptyset$ &   2,3,9   & 2592\\
8 & $\emptyset$ &   2,4,5   & 3200\\
8 & $\emptyset$ &   2,3,8   & 4608\\
\hline
\end{tabular}
     \end{minipage}
\caption{The skipped cases for $p_g=q=0$ and $K^2 > 0$}
\label{tabSkip}
\end{table*}

As mentioned, the surfaces returned by the program may be not all mixed \qe surfaces with the required invariants, since the program 
is forced to skip some signatures, giving rise to groups of large order. The program returns the list of these ``skipped'' cases.
 
For the cases $p_g=q\not =0$, this list is empty, so  the Tables 
\ref{tabBigInt1} and \ref{tabBigInt2} are complete. We report the list of the ``skipped'' signatures for $p_g=q=0$ in Table \ref{tabSkip}.
We proved that none of these cases occur by arguments very similar to the analogous 
proofs in the papers \cite{BCGP08, BP10, Frap11} and therefore we do not include them here. The interested reader 
will find the details in $$\mbox{\url{http://www.science.unitn.it/~pignatel/papers/skipped.pdf}}$$

Now let us consider the  surfaces in Tables \ref{tabBigInt0}, \ref{tabBigInt1} and \ref{tabBigInt2}. A surface with $K^2>0$ is either of general type or rational, 
therefore regular and simply connected: a quick inspection of the tables shows that this latter case does not occur, so all constructed surfaces are of general type.
 By Theorem \ref{minirr} and Proposition \ref{minimalitycriterion} all the constructed surfaces are minimal. Moreover, again by Proposition \ref{minirr}, 
since every minimal surface of general type has positive $K^2$, we have found all irregular mixed \qe surfaces with $p_g=q$.
\qed
%%%%%%%%%%%%%%%%%%%%%%%%%%%%%%%%%%%%%%%%%%%%%%%%%%%%%%%%%%%%%%%%%%%%%%%%%%%%%%%%%%%%%%%%%%%%%%%%%%

%%%%%%%%%%%%%%%%%%%%%%%%%%%%%%%%%%%%%%%%%%%%%%%%%%%%%%%%%%%%%%%%%%%%%%%%%%%%%%%%%%%%%%%%%%%%%%%%%%

\

\noindent \textbf{Authors Adresses}:

Davide Frapporti: University of Bayreuth, Lehrstuhl Mathematik VIII;\\
Universitaetsstrasse 30, D-95447 Bayreuth, Germany\\
E-mail address: \url{Davide.Frapporti@uni-bayreuth.de}

\

Roberto Pignatelli: Dipartimento di Matematica, Universit\`a di Trento;\\
Via Sommarive 14; I-38123 Trento, Italy\\
E-mail address: \url{Roberto.Pignatelli@unitn.it}

\end{document}